\documentclass{amsart}
\usepackage{amsthm}
\usepackage{amsmath}
\usepackage{amssymb}
\usepackage{enumerate}
\usepackage{enumitem}
\usepackage{mathrsfs}
\usepackage{float}
\usepackage{graphicx}
\usepackage{mathtools}
\usepackage{hyperref}
\usepackage{caption}
\usepackage{subcaption}
\usepackage{faktor}
\usepackage[dvipsnames]{xcolor}
\usepackage{lineno}

\newtheorem{theorem}{Theorem}

\newtheorem{proposition}[theorem]{Proposition}
\newtheorem{lemma}[theorem]{Lemma}
\newtheorem{definition}[theorem]{Definition}
\newtheorem{example}[theorem]{Example}
\newtheorem{remark}[theorem]{Remark}
\newtheorem{question}{Question}

\numberwithin{theorem}{section}
\numberwithin{equation}{section}

\DeclareMathOperator{\sech}{sech}

\newcounter{relctr} 
\everydisplay\expandafter{\the\everydisplay\setcounter{relctr}{0}} 

\AtBeginDocument{} 

\makeatletter
\newcommand{\mylabel}[2]{#2\def\@currentlabel{#2}\label{#1}}
\makeatother

\allowdisplaybreaks

\begin{document}
	\title[Weighted $k$-plane transform on real Hyperbolic space]{On certain mapping properties of a weighted totally geodesic $k$-plane transform on the real hyperbolic space}
    \author[P Datta]{Pratyay Datta}
    \address{Department of Mathematics, Indian Institute of Technology Indore, Khandwa Road, Simrol, Indore-453552, Madhya Praesh, India.}
    \email{pratyaydatta2013@gmail.com; phd2501241013@iiti.ac.in}
	\author[A Deshmukh]{Aniruddha Deshmukh}
    \address{Department of Mathematics, Indian Institute of Technology Indore, Khandwa Road, Simrol, Indore-453552, Madhya Praesh, India.}
    \email{aniruddha480@gmail.com; phd2001241001@iiti.ac.in}
	\author[A Kumar]{Ashisha Kumar}
    \address{Department of Mathematics, Indian Institute of Technology Indore, Khandwa Road, Simrol, Indore-453552, Madhya Praesh, India.}
    \email{akumar@iiti.ac.in}
	\date{\today}
	\begin{abstract}
		In this article, we study a weighted analogue of the totally geodesic $k$-plane transform on the real hyperbolic space. Throughout the article, we are concerned with the mapping properties of this transform. Particularly, we first consider the existence of the operator on weighted Lebesgue spaces, where the weights under consideration are natural to the geometry of the hyperbolic space. Then, we proceed to see the $L^p$-improving mapping properties of the said transform on radial functions. Further, we also prove end-point boundedness results on Lorentz spaces of radial functions. Lastly, we look into the problem of weighted $L^p$-$L^p$ boundedness, and obtain partial results in this direction. In the way of obtaining these results, we also give a spherical integral representation of the weighted totally geodesic $k$-plane transform.
	\end{abstract}
	\maketitle
	\section{Introduction}
		\label{IntroductionSection}
		The Radon transform (more generally, the $k$-plane transform) on the Euclidean space is an important operator from tomography. Named after the Austrian Mathematician Johann Radon, the Radon transform integrates the restriction of a function on the Euclidean space over lower dimensional hyperplanes. The Radon transform forms the basis of computer assisted tomography (CAT), an important tool in medical imaging. For a brief survey on the importance of Euclidean Radon transforms, we refer the reader to Chapter 1 of \cite{Markoe} and Section 12.1 and its bibliographical notes on p.508 of \cite{RubinBookFIR}. Formally, given a function $f: \mathbb{R}^n \rightarrow \mathbb{C}$, its $k$-plane transform is defined as a function $R_kf: G \left( n, k \right) \rightarrow \mathbb{C}$, as
		$$R_kf \left( \xi \right) = \int\limits_{\xi} f \left( x \right) \mathrm{d}_{\xi}x,$$
		where, $G \left( n, k \right)$ is the affine Grassmann manifold for all (affine) $k$-dimensional planes in $\mathbb{R}^n$, and $d_{\xi}x$ is the $k$-dimensional Lebesgue measure on the plane $\xi$.
		
		While the Euclidean Radon transforms are important in themselves, many researchers have also focused on defining and studying these transforms on non-Euclidean spaces. To do so, however, one would first need to generalize the notion of ``planes" to non-Euclidean spaces. This may be achieved in two ways: one, by considering that the space has a Riemannian structure, and then considering the totally geodesic submanifolds as the ``planes"; another, by considering the horocycles to be the planes of the ambient space, in the setting of symmetric spaces.
		
		In this article, we are focused on the real hyperbolic space, famously denoted by $\mathbb{H}^n$, which is a Riemannian symmetric space. Consequently, one can talk about both the variants (totally geodesic and horocyclic) of Radon transforms on $\mathbb{H}^n$. For the study of the horocyclic (also referred to as ``horospherical") Radon transforms on $\mathbb{H}^n$, we refer the reader to Section 5.2 (p. 290--304) of \cite{GelfandBook}, Section 4C (p. 68--72) of Chapter 2 of \cite{HelgasonIGRTBook} and Section 6.3 (p. 416--443) of \cite{RubinBookRT}. Recently, in 2019, Bray and Rubin in \cite{BrayRubin} have also considered the lower dimensional version of the horospherical Radon transforms on $\mathbb{H}^n$. We would also like to refer the reader to the notes and comments on Section 12.3 (see p. 510) of \cite{RubinBookFIR}. However, in this article, we are not concerned with the horocyclic transforms.
		
		The totally geodesic Radon transforms on the real hyperbolic space are also studied for a long time. These transforms were introduced by Helgason in \cite{HelgasonDO}. For a brief history (and theory) of the development of such transform (at least for the codimension $1$ case), we refer the reader to Section 6.2 of \cite{RubinBookRT} and the bibliographical notes for these section (see p. 443 of the book \cite{RubinBookRT}). Formally, we consider the set $\Xi_k \left( \mathbb{H}^n \right)$ of all $k$-dimensional totally geodesic submanifolds in $\mathbb{H}^n$ as a substitute for $G \left( n, k \right)$. Now, given a function $f: \mathbb{H}^n \rightarrow \mathbb{C}$, its totally geodesic $k$-plane transform is defined as a function $R_kf: \Xi_k \left( \mathbb{H}^n \right) \rightarrow \mathbb{C}$, given by
		\begin{equation}
			\label{KPlaneTransformFormalDefinition}
			R_kf \left( \xi \right) := \int\limits_{\xi} f \left( x \right) \mathrm{d}_{\xi}x,
		\end{equation}
		where, $\mathrm{d}_{\xi}x$ is the induced Riemannian measure on the submanifold $\xi$.
		
		The motivation for the study conducted in this article lies in the work \cite{BerensteinRubin} of Berenstein and Rubin. In their article, the authors have defined a Riesz-type potential operator (see Equation (4.50) on p. 49 of \cite{BerensteinRubin}) for the purposes of inverting the totally geodesic Radon transform. One may use this definition to define a Riesz-type potential operator related to the totally geodesic Radon transform. Particularly, one may consider the operator
		\begin{equation}
			\label{RieszPotentialRTDefinition}
			I_{\alpha}f \left( x, \xi \right) = \int\limits_{\xi} f \left( y \right) \sinh^{\alpha - k} d \left( x, y \right) \mathrm{d}y,
		\end{equation}
		where, $x \in \mathbb{H}^n$ and $\xi \in \Xi_k \left( \mathbb{H}^n \right)$ is chosen so that $x \in \xi$. A Euclidean analogue of the operator given in Equation \eqref{RieszPotentialRTDefinition} was studied by Duoandikoetxea et al. in \cite{Duoandikoetxea}. While the description of the Euclidean analogue is easy due to the presence of a group structure on $\mathbb{R}^n$, the same seems to be difficult in non-Euclidean spaces. Therefore, instead of studying a Riesz-type potential operator related to the Radon transforms, we focus on a non-divergent version of the same. That is, we consider for $\xi \in \Xi_k \left( \mathbb{H}^n \right)$,
		\begin{equation}
			\label{OurOperator}
			R_{\alpha}f \left( \xi \right) := \int\limits_{\xi} f \left( x \right) \sinh^{\alpha - k} d \left( x, u_{\xi} \right) \mathrm{d}x,
		\end{equation}
		where, $u_{\xi} \in \xi$ is the point closest to the ``origin" of $\mathbb{H}^n$. These terminologies shall be made clear in Section \ref{HyperbolicSpacePreliminariesSection}. While the operator mentioned in Equation \eqref{OurOperator} can be termed as ``non-divergent Riesz type potential", we call it a ``weighted analogue of the totally geodesic $k$-plane transform". This is because the term ``potential type operator" could give an impression of the operator being of convolution-type on the $k$-dimensional totally geodesic submanifolds. However, this is not the case. The operator defined in Equation \eqref{OurOperator} introduces a decay (or amplification) in the measure (hence the mass) of the submanifolds as one move away from (or close to) the point $u_{\xi}$, which may be treated as a point of singularity.
		
		Our goal in this article is to study the mapping properties of the operator $R_{\alpha}$ on weighted Lebesgue spaces. First, we notice that for $\alpha = k$, the operator reduces to the ordinary totally geodesic $k$-plane transform on $\mathbb{H}^n$, and a similar study for this operator was conducted by Deshmukh and Kumar in \cite{SelfLPImproving}. The results of this article, therefore, generalize their results.
		
		This article is organised as follows: in Section \ref{HyperbolicSpacePreliminariesSection}, we give certain preliminaries about the real hyperbolic space $\mathbb{H}^n$ that are needed throughout the study. Particularly, we emphasise on various models of the space $\mathbb{H}^n$, descriptions of the measure on $\mathbb{H}^n$, the set $\Xi_k \left( \mathbb{H}^n \right)$ and its measure. Towards the end of this section, we formally define weighted Lebesgue and Lorentz spaces which shall be the center of the study of mapping properties. The weights that we consider come naturally from the geometry of the hyperbolic space. Next, in Section \ref{TransfosmPreliminariesSection}, we give the formal definition of the transform $R_{\alpha}$ and its dual. The main results of this section are the one-dimensional (integral) representations of these transforms when they act on radial functions. In Section \ref{ExistenceSection}, we get optimal conditions on the values of $p$ and the weights for which the transform $R_{\alpha}$ is well-defined on weighted Lebesgue spaces. Next, in Sections \ref{LPImprovingSection} and \ref{EndPointEstimateSection}, we discuss the mapping properties of the transform $R_{\alpha}$ when it acts on radial functions. First, we consider the problem of $L^p$-improving mapping properties on Lebesgue spaces, and then we move on to the end-point behaviour. Lastly, in Section \ref{LPLPBoundednessSection}, we give certain sufficient conditions for the operator $R_{\alpha}$ to be bounded on weighted $L^p$ spaces. In this process, we also obtain a representation for the operator $R_{\alpha}$ when it acts on generic (not necessarily radial) functions. Finally, in Section \ref{ConclusionSection}, we conclude this article and give a few future directions one can tread on.
		
		Throughout this article, we are focused upon obtaining estimates for the operator $R_{\alpha}$. Consequently, at most places we do not care about constants appearing in the estimates. We shall write $C$ to mean a constant that might change at every step and depends on the parameters, but not on the functions. Also, we use the phrase ``$k$-plane transform" to mean the totally geodesic $k$-plane transform.
	\section{The real hyperbolic space \texorpdfstring{$\mathbb{H}^n$}{}}
		\label{HyperbolicSpacePreliminariesSection}
		In this section, we first recall various definitions and results related to the real hyperbolic space. The real hyperbolic space\footnote{Throughout this article, we use the phrase ``hyperbolic space" to mean real hyperbolic space.}, denoted by $\mathbb{H}^n$, can be described by various geometric models (see, for instance, p. 69--72 of \cite{Cannon}). Out of the various model available to describe the hyperbolic space, in this article, we consider the Poincar\'{e} ball model or the upper sheet of hyperboloid model, whenever we wish to look at the geometry of the space. We quickly give certain insights into these models.
		
		In the Poincar\'{e} Ball model, we consider $\mathbb{H}^n = B \left( 0, 1 \right)$, the open unit ball in $\mathbb{R}^n$, with a Riemannian structure given by
		$$\mathrm{d}x^2 = \frac{4\mathrm{d}u^2}{\left( 1 - \| u \|^2 \right)^2},$$
		where, $\mathrm{d}u^2$ is the Euclidean Riemannian structure of $B \left( 0, 1 \right)$. For more details on this Riemannian structure, we refer the reader to Theorem 3.7 of \cite{LeeRM}. With this Riemannian structure, the measure on $\mathbb{H}^n$ is given by
		$$\mathrm{d}x = \frac{2 \mathrm{d}u}{\left( 1 - \| u \|^2 \right)}.$$

		Similarly, the upper sheet of hyperboloid model of $\mathbb{H}^n$ is described by the following set.
		$$\left\lbrace x \in \mathbb{R}^{n + 1} \Bigg| \sum\limits_{i = 1}^{n} x_i^2 - x_{n + 1}^2 = - 1 \text{ and } x_{n + 1} > 0 \right\rbrace.$$
		The Riemannian structure on $\mathbb{H}^n$, in this model, is the one induced from the pseudo-Riemannian structure of $\mathbb{R}^{n + 1}$ equipped with the Lorentz inner product $\left[ x, y \right] = \sum\limits_{i = 1}^{n} x_i^2 - x_{n + 1}^2$. The particular description of measure in this model is not of use to us, and therefore, we do not describe it here.
		
		From a viewpoint of Harmonic Analysis, we see the real hyperbolic space as a quotient of certain Lie groups. For the purpose of this article, we use $G = SO_0 \left( n, 1 \right)$ as the connected component of the group of all special Lorentz transformations on $\mathbb{R}^{n + 1}$ with signature $1$ (see p. 58 of \cite{Ratcliffe} for more details). It is known that the group $G$ acts transitively on $\mathbb{H}^n$ (when considered as the upper sheet of hyperboloid) through isometries. We also use $K = SO \left( n \right)$ to denote the subgroup of $G$ that contains all rotations of $\mathbb{R}^n$ (by fixing $e_{n + 1}$), and $H = SO \left( n - k \right) \times SO_0 \left( k, 1 \right)$ as the isotropy subgroup of $\mathbb{H}^n \cap \mathbb{R}^{k + 1}$. Here, we consider $\mathbb{R}^{k + 1} \hookrightarrow \mathbb{R}^n$ by fixing the first $\left( n - k \right)$-coordinates zero. Then, it is known that $K$ is the isotropy group for $e_{n + 1} \in \mathbb{H}^n$, and $H$ is the isotropy group of $\mathbb{H}^n \cap \mathbb{R}^{k + 1}$. Further, we have $\mathbb{H}^n = \faktor{G}{K}$.
		
		In the quotient space description, we normalise the invariant measure on $K$ so that we have (see Theorem 2.51 of \cite{FollandAHA}),
		$$\int\limits_{\mathbb{H}^n} f \left( x \right) \mathrm{d}x = \int\limits_{G} \tilde{f} \left( gK \right) \mathrm{d}g = \int\limits_{G} f \left( g \cdot e_{n + 1} \right) \mathrm{d}g,$$
		where, $\mathrm{d}g$ is the left-invariant measure on $G$, and $\tilde{f}$ is the lift of $f$ to $G$.
		
		Also, since the real hyperbolic space is a manifold with constant sectional curvature (for details, we refer the reader to Theorem 8.34 of \cite{LeeRM}), the measure on $\mathbb{H}^n$ may be decomposed into a radial part and an angular part. This is famously known as the polar decomposition, and is an essential tool in our analysis. The result we now state is found in \cite{LeeRM} on p.295 (Corollary 10.17). We state it in a manner convenient to our use.
		\begin{proposition}[\cite{LeeRM}]
			\label{PolarDecompositionTheorem}
			Let $f: \mathbb{H}^n \rightarrow \mathbb{C}$ be a function that is integrable. Then, for any point $u \in \mathbb{H}^n$, we have,
			\begin{equation}
				\label{PolarDecompositionEquation}
				\int\limits_{\mathbb{H}^n} f \left( x \right) \mathrm{d}x = \int\limits_{\mathbb{S}^{n - 1}} \int\limits_{0}^{\infty} f \left( \omega, t \right) \sinh^{n - 1} t \ \mathrm{d}t \mathrm{d}\omega,
			\end{equation}
			where, $\left( \omega, t \right)$ are the (polar) coordinates on the tangent space with respect to the exponential map $\exp_u: T_u\mathbb{H}^n \rightarrow \mathbb{H}^n$. That is, the symbol $f \left( \omega, t \right)$ is to be understood as $f \left( \exp_u \left( t \omega \right) \right)$.
		\end{proposition}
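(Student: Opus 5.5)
Since $\mathbb{H}^n$ is a complete, simply connected Riemannian manifold of constant sectional curvature $-1$, the Cartan--Hadamard theorem makes $\exp_u : T_u\mathbb{H}^n \rightarrow \mathbb{H}^n$ a global diffeomorphism for every $u \in \mathbb{H}^n$. The plan is therefore to pull the integral back to $T_u\mathbb{H}^n \cong \mathbb{R}^n$ via $\exp_u$, compute the pulled-back Riemannian volume density in polar coordinates $(\omega, t) \in \mathbb{S}^{n-1} \times (0,\infty)$, and then apply the ordinary change of variables formula. The origin of $T_u\mathbb{H}^n$ is a null set, so it may be ignored.

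The key step is to identify $\exp_u^* \mathrm{d}x^2$. By the Gauss lemma, in polar coordinates on $T_u\mathbb{H}^n$ the metric takes the form $\mathrm{d}t^2 + g_t$, where $g_t$ is a metric on $\mathbb{S}^{n-1}$. To compute $g_t$ we use Jacobi fields. For a unit vector $\omega$ and a unit tangent vector $v \perp \omega$, the Jacobi field along $\gamma(t) = \exp_u(t\omega)$ with $J(0) = 0$ and $J'(0) = v$ is $J(t) = \sinh t \, E(t)$, where $E$ is the parallel transport of $v$. This uses constant curvature $-1$: the Jacobi equation becomes $J'' - J = 0$ in a parallel frame. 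Since $\mathrm{d}(\exp_u)_{t\omega}(tv) = J(t)$, we get $g_t = \sinh^2 t \, \mathrm{d}\omega^2$, where $\mathrm{d}\omega^2$ is the round metric. The Riemannian volume density is therefore $\sinh^{n-1} t \, \mathrm{d}t \, \mathrm{d}\omega$.

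As an independent check, one can verify this directly in the Poincar\'{e} ball model with $u = 0$. A Euclidean radius $r$ corresponds to geodesic distance $t = 2\tanh^{-1} r$, so $r = \tanh(t/2)$. The metric $\frac{4(\mathrm{d}r^2 + r^2 \mathrm{d}\omega^2)}{(1-r^2)^2}$ then becomes $\mathrm{d}t^2 + \sinh^2 t \, \mathrm{d}\omega^2$, because $\frac{2r}{1-r^2} = \sinh t$. For general $u$, transitivity of $G$ by isometries reduces everything to $u = 0$, since isometries intertwine the exponential maps: $g \circ \exp_u = \exp_{gu} \circ \mathrm{d}g_u$.

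Finally, the identity holds for nonnegative measurable $f$ by the change of variables theorem together with Tonelli's theorem. For integrable $f$ it follows by splitting $f$ into real and imaginary positive and negative parts, with Fubini justifying the iterated integral.

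The main obstacle is the Jacobi field computation, or equivalently the Gauss lemma step. Everything else is bookkeeping. Since the statement is quoted from \cite{LeeRM}, it is acceptable to cite Corollary 10.17 there for the form of the metric in normal coordinates on a constant curvature space, and to present the ball-model computation as a verification.
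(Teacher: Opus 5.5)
Your proof is correct and is essentially the argument behind the cited result. The paper gives no proof of its own and simply invokes Lee's Corollary 10.17, which rests on the same constant-curvature Jacobi field computation $J(t) = \sinh t \, E(t)$ giving $\mathrm{d}t^2 + \sinh^2 t \, \mathrm{d}\omega^2$ in polar normal coordinates. Your Cartan--Hadamard remark (which makes $\exp_u$ global) and your Poincar\'{e} ball check are valid details that the citation leaves implicit.
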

		
		In this article, we deal with the totally geodesic submanifolds of $\mathbb{H}^n$. We begin by recalling the definition in the setting of general Riemannian manifolds (see Proposition 8.12 of \cite{LeeRM}).
		\begin{definition}[totally geodesic submanifolds]
			\label{TGSDefinition}
			Let $M$ be a Riemannian manifold. A subset $S \subseteq M$ is totally geodesic if every geodesic in $S$ is a geodesic in $M$.
		\end{definition}
		Throughout this article, we denote the collection of all $k$-dimensional totally geodesic subanifolds of $\mathbb{H}^n$ by $\Xi_k \left( \mathbb{H^n} \right)$. The following fact about totally geodesic submanifolds is crucial for our considerations, and can be found in \cite{LeeRM}.
		\begin{theorem}
			\label{FootOfPerpendicularTGS}
			Let $M$ be a complete Riemannian manifold and $S \subseteq M$ be a totally geodesic submanifold. For any $p \in M$, there is a point $q \in S$ such that $d \left( p, q \right) = d \left( p, S \right)$. Moreover, the geodesic joining $p$ and $q$ is orthogonal to $S$ at $q$.
		\end{theorem}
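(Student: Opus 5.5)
The statement has two parts: existence of a nearest point $q \in S$ to $p$, and orthogonality of the minimizing geodesic to $S$ at $q$. I would treat them separately, using completeness for the first and the first variation formula for the second. Throughout I take $S$ to be an embedded, properly embedded (topologically closed) submanifold. This is the situation in which we need the result, since the totally geodesic submanifolds of $\mathbb{H}^n$ are complete and closed; for a merely immersed or non-closed $S$ the infimum need not be attained.

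\textbf{Existence.} Set $\delta = d\left( p, S \right) = \inf_{s \in S} d\left( p, s \right)$ and pick a minimizing sequence $s_j \in S$ with $d\left( p, s_j \right) \to \delta$. All $s_j$ lie in the closed ball $\overline{B}\left( p, \delta + 1 \right)$. By the Hopf--Rinow theorem, completeness of $M$ makes closed bounded sets compact, so a subsequence converges to some $q \in M$. Since $S$ is closed, $q \in S$. Continuity of the distance function then gives $d\left( p, q \right) = \delta$. Hopf--Rinow also supplies a minimizing geodesic $\gamma : \left[ 0, \delta \right] \to M$ from $p$ to $q$ with unit speed, and its length equals $\delta$.

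\textbf{Orthogonality.} This is the main step. Let $v \in T_qS$ be arbitrary and choose a smooth curve $\sigma : \left( -\epsilon, \epsilon \right) \to S$ with $\sigma\left( 0 \right) = q$ and $\sigma'\left( 0 \right) = v$. If $\delta = 0$ there is nothing to prove, so assume $\delta > 0$. Build a smooth variation $\Gamma\left( s, t \right)$ of $\gamma$ with the following properties:
\begin{itemize}
\item $\Gamma\left( s, 0 \right) = p$ for all $s$;
\item $\Gamma\left( s, \delta \right) = \sigma\left( s \right)$;
\item $\Gamma\left( 0, t \right) = \gamma\left( t \right)$.
\end{itemize}
For instance, take a vector field $V$ along $\gamma$ with $V\left( 0 \right) = 0$ and $V\left( \delta \right) = v$, set $\Gamma\left( s, t \right) = \exp_{\gamma\left( t \right)}\left( s V\left( t \right) \right)$, and correct the endpoint curve to $\sigma$ inside a small neighbourhood of $q$.

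Each curve $\Gamma\left( s, \cdot \right)$ joins $p$ to a point of $S$, so its length $L\left( s \right)$ is at least $\delta = L\left( 0 \right)$. Hence $L$ has a minimum at $s = 0$ and $L'\left( 0 \right) = 0$. The first variation formula for a unit-speed geodesic gives
$$L'\left( 0 \right) = \left\langle V, \gamma' \right\rangle \Big|_{t = 0}^{t = \delta} = \left\langle v, \gamma'\left( \delta \right) \right\rangle,$$
because the interior integral term $-\int \left\langle V, \nabla_t \gamma' \right\rangle \mathrm{d}t$ vanishes for a geodesic and $V\left( 0 \right) = 0$. Therefore $\gamma'\left( \delta \right) \perp v$ for every $v \in T_qS$, which is the claimed orthogonality.

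The obstacle I expect is making the variation genuinely end on $S$ rather than only to first order. Using a general boundary-value form of the first variation formula, which only needs $\partial_s \Gamma\left( 0, \delta \right) = v$, avoids this technicality entirely. Note also that the totally geodesic hypothesis plays no role in this argument; only completeness of $M$ and closedness of $S$ are used.
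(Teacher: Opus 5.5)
The paper gives no proof of this statement; it quotes it from \cite{LeeRM}. Your argument is the standard textbook one, and it is correct. Hopf--Rinow (closed bounded sets are compact) together with closedness of $S$ gives a nearest point $q$. The first variation formula for a variation with $\Gamma(s,0)=p$ and $\Gamma(s,\delta)\in S$ then gives $\langle v,\gamma'(\delta)\rangle=0$ for all $v\in T_qS$, and this holds for every minimizing geodesic $\gamma$ from $p$ to $q$.

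Your extra hypothesis that $S$ is closed is not a weakness but a needed repair of the statement as printed. An open geodesic segment in $\mathbb{R}^2$ is totally geodesic, yet the distance to it is in general not attained; so is a dense irrational line on a flat torus, if immersed submanifolds are allowed. You should also take $M$ connected so that $d(p,S)<\infty$. The hypothesis holds where the paper uses the result: every $\xi\in\Xi_k(\mathbb{H}^n)$ is the intersection of the hyperboloid with a linear subspace, hence closed. You are also right that the totally geodesic assumption plays no role.

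One sentence in your last paragraph should be corrected. The boundary form of the first variation formula only \emph{computes} $L'(0)=\langle v,\gamma'(\delta)\rangle$ from $\partial_s\Gamma(0,\delta)=v$. The \emph{vanishing} of $L'(0)$ comes from $L(s)\geq\delta=L(0)$, which uses $\Gamma(s,\delta)\in S$, or at least $d(\Gamma(s,\delta),S)=o(s)$. So the endpoint correction cannot be skipped, but it is easy to make explicit:
\begin{itemize}
\item write $\sigma(s)=\exp_q w(s)$ with $w(0)=0$ and $w'(0)=v$;
\item let $P_t$ denote parallel transport along $\gamma$ from $q$ to $\gamma(t)$;
\item pick a smooth $\varphi$ on $[0,\delta]$ vanishing near $0$ and equal to $1$ near $\delta$;
\item set $\Gamma(s,t)=\exp_{\gamma(t)}\bigl(\varphi(t)P_tw(s)\bigr)$.
\end{itemize}
Alternatively, you can avoid variations entirely. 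For $t_0<\delta$ close to $\delta$, the function $x\mapsto t_0+d(\gamma(t_0),x)$ is smooth near $q$. It satisfies $t_0+d(\gamma(t_0),x)\geq d(p,x)\geq\delta$ for $x\in S$, with equality at $q$, so its differential annihilates $T_qS$. By the Gauss lemma its gradient at $q$ is $\gamma'(\delta)$, which gives the orthogonality directly.
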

		\begin{remark}
			\normalfont
			Theorem \ref{FootOfPerpendicularTGS} gives the existence of a ``foot of the perpendicular" from a given point $p$ to a totally geodesic submanifold $S$. It can be proved that in the real hyperbolic space $\mathbb{H}^n$, the foot of the perpendicular is unique. To see this, assume that there are two distinct points $q_1$ and $q_2$ in $S$ that realise the distance with $p$. Then, the geodesic triangle formed by the vertices $p$, $q_1$ and $q_2$ would have two right-angles, which is impossible in the hyperbolic space (see Corollary 1 on p. 86 of \cite{Ratcliffe}).
			
			For a given submanifold $\xi \in \Xi_k \left( \mathbb{H}^n \right)$, we denote by $u_{\xi}$ the point closest to $0 \in \mathbb{H}^{n}$. Here, by the symbol `$0$', we mean the $0$ of $\mathbb{R}^n$ when the real hyperbolic space is described by the Poincar\'{e} disc model, and $e_{n + 1} \in \mathbb{R}^{n + 1}$, when $\mathbb{H}^n$ is described by the upper sheet of hyperboloid. We must remark here that since $\mathbb{H}^n$ is a homogeneous and isotropic space, any point may be considered as its ``origin". Our particular choices are solely for the purposes of computational convenience.
		\end{remark}
		An explicit description of the totally geodesic submanifolds of $\mathbb{H}^n$ can be obtained by fixing one of the models of the real hyperbolic space. The next result gives a summary of such descriptions, and the details can be found in \cite{LeeRM} (Exercise 8--31 on p. 262).
		\begin{proposition}
			\label{TGSHnDescription}
			~
			\begin{enumerate}
				\item When $\mathbb{H}^n$ is described by the upper sheet of hyperboloid, any $k$-dimensional totally geodesic submanifold is obtained by intersecting $\left( k + 1 \right)$-dimensional subspace of $\mathbb{R}^{n + 1}$ with $\mathbb{H}^n$.
				\item When $\mathbb{H}^n$ is described by the Poincar\'{e} ball model, any $k$-dimensional totally geodesic submanifold is realised as a $k$-dimensional spherical cap inside the open unit ball that intersects the boundary of the unit ball in $\mathbb{R}^n$ orthogonally.
			\end{enumerate}
		\end{proposition}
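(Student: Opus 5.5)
For part (1) I would work in the hyperboloid model and use the transitivity of $G = SO_0 \left( n, 1 \right)$ by isometries. First I would show that the model piece $\xi_0 = \mathbb{H}^n \cap \mathbb{R}^{k + 1}$ is totally geodesic. It is the fixed-point set of the linear Lorentz isometry reflecting the first $n - k$ coordinates. A geodesic of $\mathbb{H}^n$ starting at a point of $\xi_0$ with velocity tangent to $\xi_0$ is mapped by this isometry to a geodesic with the same initial data, so by uniqueness it stays in $\xi_0$. Alternatively, the geodesics of $\mathbb{H}^n$ are explicitly $t \mapsto \cosh t \, x + \sinh t \, v$, and these lie in $\operatorname{span}\{x, v\} \subseteq \mathbb{R}^{k+1}$.

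Next, let $W$ be any $\left( k + 1 \right)$-dimensional subspace meeting $\mathbb{H}^n$. Since $W$ contains a timelike vector, the Lorentz form restricted to $W$ has signature $\left( k, 1 \right)$. By Witt's theorem (equivalently, the transitivity of $G$ on such subspaces), some $g \in G$ maps $W$ onto $\mathbb{R}^{k + 1}$. Hence $W \cap \mathbb{H}^n = g^{-1} \xi_0$ is totally geodesic, being an isometric image.

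For the converse, let $S$ be a $k$-dimensional totally geodesic submanifold and fix $x \in S$. Put $W = \operatorname{span}\{x\} \oplus T_x S$, which is $\left( k + 1 \right)$-dimensional. Every geodesic of $S$ through $x$ is a geodesic $\cosh t \, x + \sinh t \, v$ of $\mathbb{H}^n$ with $v \in T_x S$, so it lies in $W$. Since $S$ is connected and complete in the induced metric (here I implicitly take $S$ to be complete, i.e.\ maximal), $\exp_x$ restricted to $T_x S$ covers $S$. Thus $S \subseteq W \cap \mathbb{H}^n$, and equality follows because both are connected complete $k$-manifolds.

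For part (2) I would transfer part (1) through the standard isometry between the two models. This is the stereographic projection from $-e_{n+1}$, $x \mapsto \frac{\left( x_1, \ldots, x_n \right)}{1 + x_{n+1}}$. Under it, the intersection of a linear $\left( k+1 \right)$-subspace with the hyperboloid goes to the intersection of the ball with a $k$-sphere or $k$-plane of $\mathbb{R}^n$. The plane case occurs when the subspace contains $e_{n+1}$.

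The orthogonality to $\partial B \left( 0, 1 \right)$ can be verified directly by computation. A cleaner route uses conformality. For $\xi_0$ the image is a flat $k$-disc through $0$, which visibly meets the boundary orthogonally. The isometries of the ball model are restrictions of Möbius transformations preserving the unit sphere. These are conformal and map $k$-spheres to $k$-spheres, so orthogonality to the unit sphere is preserved. Conversely, every such orthogonal cap is a Möbius image of a flat disc through $0$. This holds because the group of Möbius maps preserving the ball acts transitively on $\left( k, n \right)$ pairs of caps; one can move the Euclidean centre of the cap's closest point to $0$.

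I expect the main obstacle to be the converse direction in part (1). It needs completeness/maximality of $S$ and the identification of $S$ with $\exp_x \left( T_x S \right)$. Once that is in place, part (2) is bookkeeping via conformality.
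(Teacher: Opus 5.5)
The paper gives no proof of this proposition; it only cites Lee (Exercise 8--31). Your argument is a correct, self-contained solution of that exercise. It proceeds in four steps:
\begin{enumerate}
\item $\xi_0=\mathbb{H}^n\cap\mathbb{R}^{k+1}$ is totally geodesic, either as a fixed-point set of an isometry or via the explicit geodesics $\cosh t\,x+\sinh t\,v$.
\item Witt's theorem moves $\xi_0$ onto $W\cap\mathbb{H}^n$ for any $(k+1)$-dimensional $W$ of signature $(k,1)$.
\item An arbitrary $S$ is recovered as $\exp_x(T_xS)$.
\item Everything is transported to the ball by stereographic projection and the conformality of ball-preserving M\"obius maps.
\end{enumerate}
Compared with the citation, this buys two things. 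First, it makes explicit the hypothesis the statement leaves implicit, namely that $S$ is connected and complete. Without it the statement is false: an open geodesic ball inside $\xi_0$, or a union of two disjoint totally geodesic $k$-planes, is a $k$-dimensional totally geodesic submanifold but not the full intersection of $\mathbb{H}^n$ with one $(k+1)$-dimensional subspace. Second, your transitivity step is exactly what underlies the paper's later identification of $\Xi_k(\mathbb{H}^n)$ with $G/H$ and its description of every $\xi$ as a translate $g\cdot\mathbb{H}^k$.

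A few points can be tightened.
\begin{enumerate}
\item The coordinate reflection has determinant $(-1)^{n-k}$, so it need not lie in $SO_0(n,1)$. It is still an isometry of $\mathbb{H}^n$, which is all you use.
\item To get an element of $SO_0(n,1)$ from Witt's theorem, compose its isometry with $-I$ (it fixes every linear subspace and swaps the sheets). Then compose with a reflection in a spacelike coordinate of $\mathbb{R}^{k+1}$ to fix the determinant.
\item The final equality in part (1) is cleaner written as
\[
S=\exp^S_x(T_xS)=\exp_x(T_xS)=W\cap\mathbb{H}^n .
\]
The first equality is Hopf--Rinow, and the second holds because $S$-geodesics are $\mathbb{H}^n$-geodesics. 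For the third, write $y\in W\cap\mathbb{H}^n$ as $y=ax+w$ with $w\in T_xS$. Then $a=-[x,y]\geq 1$ and $[w,w]=a^2-1$, so $a=\cosh t$ and $[w,w]=\sinh^2 t$.
\item The converse in part (2), that every orthogonal cap is totally geodesic, is not claimed by the proposition, so you may drop it. If you keep it, justify it this way: move the point of the cap nearest $0$ to $0$ by a ball-preserving M\"obius map. A $k$-sphere orthogonal to $\partial B(0,1)$ is invariant under inversion in $\partial B(0,1)$. So if it passes through $0$, it also passes through $\infty$ and is therefore a flat $k$-plane.
\end{enumerate}
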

		\begin{remark}
			\label{SmallSubmanifoldsSame}
			\normalfont
			In any description of the real hyperbolic space, a $k$-dimensional totally geodesic submanifold is again a real hyperbolic space. Consequently, the tools of polar decomposition can be utilised easily on such submanifolds by fixing any point on the submanifold as the ``origin".
		\end{remark}
		
		The measure on the set $\Xi_k \left( \mathbb{H}^n \right)$ can be introduced by considering it as the quotient space $\faktor{G}{H}$. The measure can then be represented in various ways, the details of which can be found in Lemma 1 (Equation (2.22)) of \cite{BerensteinRubin}. We mention two of the representations that are used in this article. First, we fix a positive function $f_1$ on $G$ such that for every $g \in G$, we have,
		$$\bar{f}_1 \left( g \right) := \int\limits_{H} f_1 \left( g \gamma \right) \mathrm{d}\gamma < +\infty.$$
		Let $\pi: G \rightarrow \faktor{G}{H}$ be the canonical projection and $\varphi: \Xi_k \left( \mathbb{H}^n \right) \rightarrow \mathbb{C}$ be a ``nice\footnote{By a ``nice" function, we mean a function for which the integrals that appear exist in the Lebesgue sense. We use this seemingly vague terminology throughout our article, especially when we give definitions. In the main results we shall give explicit classes of functions that are ``nice".}" function. We define $\Phi: G \rightarrow \mathbb{C}$ as
		$$\Phi \left( g \right) = \frac{\left( \varphi \circ \pi \right) \left( g \right) f_1 \left( g \right)}{\bar{f}_1 \left( g \right)}.$$
		Then, the measure on $\Xi_k \left( \mathbb{H}^n \right)$ is defined so that
		$$\int\limits_{\Xi_k \left( \mathbb{H}^n \right)} \varphi \left( \xi \right) \mathrm{d}\xi = \int\limits_{G} \Phi \left( g \right) \mathrm{d}g.$$
		It is also known (see, for instance, \cite{BerensteinRubin}) that for functions $\varphi: \Xi_k \left( \mathbb{H}^n \right) \rightarrow \mathbb{C}$ for which the integral makes sense, the following representation is true.
		\begin{equation}
			\label{XiKMeasure}
			\int\limits_{\Xi_k \left( \mathbb{H}^n \right)} \varphi \left( \xi \right) \mathrm{d}\xi = \left| \mathbb{S}^{n - k - 1} \right| \int\limits_{K} \int\limits_{0}^{\infty} \varphi \left( kg_{\theta}^{-1} \cdot \mathbb{H}^k \right) \sinh^{n - k - 1} \theta \ \cosh^{k} \theta \mathrm{d}\theta,
		\end{equation}
		where, $\mathbb{H}^{k}$ is the $k$-dimensional totally geodesic submanifold through $0 \in \mathbb{H}^n$ that has tangent space $\mathbb{R}^k \hookrightarrow \mathbb{R}^n$, where, the embedding is obtained by fixing the first $\left( n - k \right)$-coordinates to be zero, and $g_{\theta} = \begin{bmatrix}
					I_{n - 1} & 0 & 0 \\
					0 & \sinh \theta & \cosh \theta \\
					0 & \cosh \theta & \sinh \theta
				\end{bmatrix}$ is a hyperbolic translation that takes $0 \in \mathbb{H}^n$ to a point at a distance $\theta$ from $0$. The idea behind the representation given in Equation \eqref{XiKMeasure} is that a totally geodesic submanifold in $\mathbb{H}^n$ can be obtained by appropriately translating and rotating a fixed submanifold through $0$. Essentially, Equation \eqref{XiKMeasure} gives a ``polar decomposition" on the set $\Xi_k \left( \mathbb{H}^n \right)$ and is essential to our study.
				
		The following description of $\Xi_k \left( \mathbb{H}^n \right)$ is crucial for the considerations of Section \ref{LPLPBoundednessSection}. The result we now state is given in Proposition 2.8 of \cite{SelfLPImproving}.
		\begin{proposition}
			\label{DesriptionXik}
			The collection $\Xi_k \left( \mathbb{H}^n \right)$, of all $k$-dimensional totally geodesic subamanifolds of $\mathbb{H}^n$, can be described by the set $\left\lbrace \left( \xi, u \right) | \xi \in G_{n, k} \text{ and } u \in \xi^{\perp} \right\rbrace$, where $G_{n, k}$ is the Grassmannian manifold of all $k$-dimensional subspaces of $\mathbb{R}^n$. Here, $u$ may also be identified with the point in the submanifold closest to $0$ of $\mathbb{H}^n$.
		\end{proposition}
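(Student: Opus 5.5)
To prove Proposition \ref{DesriptionXik}, I will work in the upper sheet of hyperboloid model and use Proposition \ref{TGSHnDescription}(1). The plan is to build the map $\Psi : \Xi_k \left( \mathbb{H}^n \right) \rightarrow \left\lbrace \left( \sigma, u \right) \right\rbrace$ explicitly and then show it is a bijection.

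\textbf{Construction of the map.} Given $\xi \in \Xi_k \left( \mathbb{H}^n \right)$, let $u_{\xi}$ be its unique foot of the perpendicular from $0 = e_{n + 1}$, which exists by Theorem \ref{FootOfPerpendicularTGS} and the remark following it. Let $\gamma$ be the geodesic from $0$ to $u_{\xi}$, and write $u_{\xi} = \exp_0 \left( t \omega \right)$ with $t = d \left( 0, \xi \right)$ and $\omega \in T_0 \mathbb{H}^n \cong \mathbb{R}^n$. Since $K$ acts transitively on unit vectors at $0$ and preserves the setup, I may rotate so that $\omega = e_n$. Then $u_{\xi} = g_t^{-1} \cdot 0$ (up to the sign convention in $g_\theta$).

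\textbf{Identifying the subspace.} I will transport $\xi$ back to $0$ by the hyperbolic translation $g_t$, obtaining a totally geodesic submanifold $g_t \cdot \xi$ through $0$. Its tangent space at $0$ is a $k$-dimensional subspace $\sigma \subseteq \mathbb{R}^n$. The key claim is that $\sigma \perp e_n$. This follows because the geodesic $\gamma$ meets $\xi$ orthogonally at $u_{\xi}$ (Theorem \ref{FootOfPerpendicularTGS}), and $g_t$ is an isometry carrying $\gamma$ to the $e_n$-axis through $0$; hence $g_t$ maps $T_{u_{\xi}} \xi$ onto a subspace of $e_n^{\perp}$.

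Undoing the rotation, I set $\Psi \left( \xi \right) = \left( \sigma, u \right)$, where $\sigma \in G_{n, k}$ and $u = t \omega \in \sigma^{\perp}$. Equivalently, $u$ may be taken as the point $u_{\xi}$ itself via $\exp_0$. In the case $u = 0$, $\xi$ passes through the origin and $\sigma = T_0 \xi$ directly.

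\textbf{Inverse map.} Conversely, given $\sigma \in G_{n, k}$ and $u \in \sigma^{\perp}$, I define
$$\xi = \exp_{u'} \left( P \sigma \right), \qquad u' = \exp_0 \left( u \right),$$
where $P$ denotes parallel transport along the radial geodesic from $0$ to $u'$. This $\xi$ is totally geodesic, since it is the translate by an isometry of $\mathbb{H}^n \cap \left( \sigma \oplus \mathbb{R} e_{n + 1} \right)$. It meets the radial geodesic orthogonally at $u'$. By uniqueness of the perpendicular foot (the hyperbolic right-angle argument), $u'$ is the closest point to $0$. Thus the two constructions are mutually inverse.

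\textbf{Main obstacle.} The main technical point is checking that parallel transport along the radial geodesic maps $\sigma$ to the tangent space of the translated submanifold, i.e.\ that $dg_t^{-1}$ restricted to $e_n^{\perp}$ coincides with parallel transport. I would verify this by a direct matrix computation: $g_t$ fixes the coordinates $e_1, \dots, e_{n - 1}$, so for $v \in e_n^{\perp}$ the curve $g_t^{-1} \cdot \exp_0 \left( s v \right)$ has constant-in-$t$ Lorentz components in those directions. That matrix check, together with the bijectivity bookkeeping, completes the proof.
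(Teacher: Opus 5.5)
The paper does not prove this proposition; it is quoted as Proposition 2.8 of \cite{SelfLPImproving}, so there is no in-text argument to match yours against. Your proof is correct. It is the same translate-and-rotate picture that underlies Equation \eqref{XiKMeasure} and the parametrization $(\xi,u)=(g\cdot\mathbb{R}^k, hg\cdot e_{k+1})$ used in Theorem \ref{FormulaGeneralFunctions}.

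Since you already work in the hyperboloid, you can avoid the step you flag as the main obstacle entirely:
\begin{itemize}
\item For $\xi=V\cap\mathbb{H}^n$, the slice $W=V\cap\{x_{n+1}=0\}$ is a $k$-dimensional subspace of $\mathbb{R}^n$, because $V$ meets $\mathbb{H}^n$.
\item The Lorentz complement of $W$ in $V$ is a timelike line. It meets $\mathbb{H}^n$ in a single point $q=\cosh h\,e_{n+1}+\sinh h\,\omega$, and $[q,W]=0$ forces $\omega\perp W$.
\item Write $x=w+sq\in\xi$ with $w\in W$. The constraint $[x,x]=-1$ gives $s=(1+\|w\|^2)^{1/2}$, so $\cosh d(0,x)=x_{n+1}=s\cosh h$ is minimized exactly at $w=0$.
\end{itemize}
This gives at once:
\begin{itemize}
\item existence and uniqueness of the foot $u_\xi=q$;
\item the equality $T_q\xi=W$ as literal subspaces of $\mathbb{R}^{n+1}$, so your $\sigma$ is simply $W$;
\item the inverse map $(W,h\omega)\mapsto (W\oplus\mathbb{R}q)\cap\mathbb{H}^n$;
\item the Pythagorean relation \eqref{PythagorasSpecialHnXik} as a bonus, since $\cosh d(x,q)=-[x,q]=s$.
\end{itemize}
None of this needs Theorem \ref{FootOfPerpendicularTGS}, an angle-sum argument, or parallel transport. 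Your intrinsic route is longer but model-independent. That suits the later use of $(\xi,u)$ in the ball model with $\|u\|=d(0,\xi)$.

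If you keep your version, make three things explicit.
\begin{itemize}
\item The remark after Theorem \ref{FootOfPerpendicularTGS} shows that the nearest point is unique. You need something slightly different: a point $u'\in\xi$ where the radial geodesic meets $\xi$ orthogonally \emph{is} the nearest point. To get this, apply the two-right-angles argument to the triangle formed by $0$, $u'$ and the nearest point; its side joining $u'$ to the nearest point lies in $\xi$ because $\xi$ is totally geodesic.
\item Recovering $\xi$ from $\Psi(\xi)$ uses $\xi=\exp_{u_\xi}(T_{u_\xi}\xi)$. This holds because $\xi=V\cap\mathbb{H}^n$ is complete and connected.
\item In your matrix check, $dg_t^{-1}v=v$ for $v\in\mathrm{span}(e_1,\dots,e_{n-1})$ is not enough on its own. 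You must also note that this constant field is parallel along $s\mapsto\cosh s\,e_{n+1}+\sinh s\,e_n$. It is, since it stays Lorentz-orthogonal to the curve (hence tangent to $\mathbb{H}^n$) and has zero ambient derivative. Once $\sigma$ is identified with the parallel transport of $T_{u_\xi}\xi$ back to $0$, the independence of $\Psi$ from the chosen rotation sending $\omega$ to $e_n$ is automatic.
\end{itemize}
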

		With the understanding of the hyperbolic space and its totally geodesic submanifolds made above, we now give the definition of weighted Lebesgue spaces. The weights we consider are radial and arise naturally through the geometry of $\mathbb{H}^n$. The weighted Lebesgue space on $\mathbb{H}^n$ is given by,
		\begin{equation}
			\label{WeightedLPSPace}
            \begin{aligned}
                L^p_{\mu_1, \mu_2} \left( \mathbb{H}^n \right) := \Bigg\lbrace f: \mathbb{H}^n \rightarrow \mathbb{C} \Bigg| &f \text{ is measurable and } \\
                &\int\limits_{\mathbb{H}^n} \left| f \left( x \right) \right|^p \sinh^{\mu_1} d \left( 0, x \right) \cosh^{\mu_2} d \left( 0, x \right) \mathrm{d}x < + \infty \Bigg\rbrace.
            \end{aligned}
		\end{equation}
		The norm on this space is given by
		\begin{equation}
			\label{WeightedLPNorm}
			\| f \|_{L^p_{\mu_1, \mu_2} \left( \mathbb{H}^n \right)} := \left( \int\limits_{\mathbb{H}^n} \left| f \left( x \right) \right|^p \sinh^{\mu_1} d \left( 0, x \right) \cosh^{\mu_2} d \left( 0, x \right) \mathrm{d}x \right)^{\frac{1}{p}}.
		\end{equation}
		Similarly, the weighted Lebesgue space on $\Xi_k \left( \mathbb{H}^n \right)$ is given by
		\begin{equation}
			\label{WeightedLPSpaceXik}
            \begin{aligned}
                L^p_{\nu_1, \nu_2} \left( \Xi_k \left( \mathbb{H}^n \right) \right) := &\Bigg\lbrace \varphi: \Xi_k \left( \mathbb{H}^n \right) \rightarrow \mathbb{C} \Bigg| \varphi \text{ is measurable and } \\
                &\int\limits_{\Xi_k \left( \mathbb{H}^n \right)} \left| \varphi \left( \xi \right) \right|^p \sinh^{\nu_1} d \left( 0, \xi \right) \cosh^{\nu_2} d \left( 0, \xi \right) \mathrm{d}\xi < + \infty \Bigg\rbrace,
            \end{aligned}
		\end{equation}
		where, the measure $\mathrm{d}\xi$ on $\Xi_k \left( \mathbb{H}^n \right)$ is understood in the sense of Equation \eqref{XiKMeasure}. The norm on this space is given by,
		\begin{equation}
			\label{WeightedLPNormXik}
			\| \varphi \|_{L^p_{\nu_1, \nu_2} \left( \Xi_k \left( \mathbb{H}^n \right) \right)} := \left( \int\limits_{\Xi_k \left( \mathbb{H}^n \right)} \left| \varphi \left( \xi \right) \right|^p \sinh^{\nu_1} d \left( 0, \xi \right) \cosh^{\nu_2} d \left( 0, \xi \right) \mathrm{d}\xi \right)^{\frac{1}{p}}.
		\end{equation}
		We also use the notion of weighted Lorentz space $L^{p, 1}_{\mu_1, \mu_2} \left( \mathbb{H}^n \right)$ to study the end-point behaviour. For the definition of Lorentz space in the general measure theoretic setting, we refer the reader to Section 1.4.2 of  \cite{GrafakosCFA}.
	\section{The weighted analogue of the \texorpdfstring{$k$}{k}-plane transform and its dual}
		\label{TransfosmPreliminariesSection}	
		In this section, we define the operator of our interest formally, and its dual. With these definitions made concrete, we give a formula for these operators when they act on radial functions. Lastly, using these formulae, we make some explicit computations that are helpful further in the article.
		With these notations made precise, we now give the definition of a weighted totally geodesic $k$-plane transform on $\mathbb{H}^n$.
		\begin{definition}
			\label{WeightedKPlaneTransform}
			For a ``nice" function $f: \mathbb{H}^n \rightarrow \mathbb{C}$ and $\alpha > 0$, we define $R_{\alpha}f: \Xi_k \left( \mathbb{H}^n \right) \rightarrow \mathbb{C}$ as
			\begin{equation}
				\label{WeightedKPlaneTransformEquation}
				R_{\alpha}f \left( \xi \right) := \int\limits_{\xi} f \left( x \right) \sinh^{\alpha - k} d \left( x, u_{\xi} \right) \mathrm{d}x,
			\end{equation}
			where, $\mathrm{d}x$ is understood to be the $k$-dimensional Riemannian measure induced on $\xi$.
		\end{definition}
		\begin{remark}
			\normalfont
			For induced measures on (embedded) Riemannian submanifolds, we refer the reader to p. 15 and p. 30 of \cite{LeeRM}.
			
			We now give another representation of Equation \eqref{WeightedKPlaneTransformEquation}. We begin by noticing that since $\Xi_k \left( \mathbb{H}^n \right)$ is identified with $\faktor{G}{H}$, any element $\xi \in \Xi_k \left( \mathbb{H}^n \right)$ may be represented by a coset $gH$, for some $g \in G$. Thus, any element $x \in \xi$ may be represented by $gh$, for appropriate $h \in H$. That is, we have,
			\begin{equation}
				\label{WeightedKPlaneTransformGroupEquation}
				R_{\alpha}f \left( \xi \right) = \int\limits_{H} f \left( gh \right) \sinh^{\alpha - k} d \left( gh, u_{gH} \right) \mathrm{d}h.
			\end{equation}
		\end{remark}
		\begin{remark}
			\label{SingularIntegralsRemark}
			\normalfont
			The condition $\alpha > 0$ in Definition \ref{WeightedKPlaneTransform} is required for assuring the convergence of the integral near $u_{\xi}$. For $\alpha \leq 0$, one would land in the realm of singular integrals along totally geodesic submanifolds. In this article, we do not wish to work with singular integrals and therefore consider only the case when $\alpha > 0$.
		\end{remark}
		We now give the definition of the dual transform to Equation \eqref{WeightedKPlaneTransformEquation}.
		\begin{definition}[Dual weighted $k$-plane transform]
			\label{DualTransformDefinition}
			For a ``nice" function $\varphi: \Xi_k \left( \mathbb{H}^n \right) \rightarrow \mathbb{C}$, and $\alpha > 0$, we define the dual transform to $R_{\alpha}$ as $R_{\alpha}^*\varphi: \mathbb{H}^n \rightarrow \mathbb{C}$, as
			\begin{equation}
				\label{DualTransformEquation}
				R_{\alpha}^*\varphi \left( x \right) := \int\limits_{\xi \ni x} \varphi \left( \xi \right) \sinh^{\alpha - k} d \left( x, u_{\xi} \right) \mathrm{d}\xi,
			\end{equation}
			where, the measure $\mathrm{d}\xi$ on the set $\left\lbrace \xi \in \Xi_k \left( X \right) | x \in \xi \right\rbrace$ is the induced measure by the rotation group $K = SO \left( n \right)$.
		\end{definition}
		\begin{remark}
			\normalfont
			The idea behind inducing a measure on the set $\left\lbrace \xi \in \Xi_k \left( \mathbb{H}^n \right) | x \in \xi \right\rbrace$ is that any $k$-dimensional totally geodesic submanifold through $x \in \mathbb{H}^n$ can be realised as a rotation of a fixed submanifold (through $x$). For further details, we refer the reader to p. 44--45 of \cite{BerensteinRubin}.
		\end{remark}
		\begin{remark}
			\normalfont
			We notice that since $\mathbb{H}^n$ is identified with $\faktor{G}{K}$, every element $x \in \mathbb{H}^n$ is a coset $gK$, for some $g \in G$. Particularly, if $gK = x$, then we have $g \cdot 0 = x$. Therefore, any element $\xi \in \Xi_k \left( \mathbb{H}^n \right)$ with $x \in \xi$ can be obtained by $gk \cdot \mathbb{H}^k$, for an appropriate rotation $k \in K$. Here, by the symbol $\mathbb{H}^k$, we mean the submanifold of the upper sheet of hyperboloid obtained by intersecting it with $\mathbb{R}^{k + 1}$ (having the embedding given in Section \ref{HyperbolicSpacePreliminariesSection}). With this understanding, Equation \eqref{DualTransformEquation} may also be written as
			\begin{equation}
				\label{DualTransformGroupEquation}
				R_{\alpha}^*f \left( x \right) = \int\limits_{K} \varphi \left( gk \cdot \mathbb{H}^k \right) \sinh^{\alpha - k} d \left( g \cdot 0, u_{gk \cdot \mathbb{H}^k} \right) \mathrm{d}k.
			\end{equation}
		\end{remark}
		We now prove that the transform given in Equation \eqref{DualTransformEquation} is a formal dual of the weighted $k$-plane transform given in Equation \eqref{WeightedKPlaneTransformEquation}.
		\begin{proposition}
			\label{FormalDualProposition}
			For ``nice" functions $f: \mathbb{H}^n \rightarrow \mathbb{C}$ and $\varphi: \Xi_k \left( \mathbb{H}^n \right) \rightarrow \mathbb{C}$, we have,
			\begin{equation}
				\label{DualityEquation}
				\int\limits_{\Xi_k \left( \mathbb{H}^n \right)} R_{\alpha}f \left( \xi \right) \varphi \left( \xi \right) \mathrm{d}\xi = \int\limits_{\mathbb{H}^n} f \left( x \right) R_{\alpha}^*\varphi \left( x \right) \mathrm{d}x.
			\end{equation}
		\end{proposition}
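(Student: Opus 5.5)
We will prove \eqref{DualityEquation} by lifting both sides to integrals over the group $G$ and using Fubini's theorem. The computation is the same as the classical duality for the unweighted $k$-plane transform. The only new ingredient is the weight $\sinh^{\alpha - k} d \left( x, u_{\xi} \right)$. It depends on the pair $\left( x, \xi \right)$ only, and it is $G$-invariant in the sense that $d \left( gx, u_{g\xi} \right)$ depends only on $d \left( x, u_{\xi} \right)$ after fixing the origin appropriately; in any case it is a fixed function on the incidence set $\left\lbrace \left( x, \xi \right) : x \in \xi \right\rbrace$. This means we never have to compute the weight. We only need it to be the same function of the pair $\left( x, \xi \right)$ on both sides.

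First, write the left side using the definition of the measure on $\Xi_k \left( \mathbb{H}^n \right) = \faktor{G}{H}$. By the Weil-type formula recorded in Section \ref{HyperbolicSpacePreliminariesSection}, with $\xi = gH$,
$$\int\limits_{\Xi_k \left( \mathbb{H}^n \right)} R_{\alpha}f \left( \xi \right) \varphi \left( \xi \right) \mathrm{d}\xi = \int\limits_{G} \frac{\varphi \left( gH \right) f_1 \left( g \right)}{\bar{f}_1 \left( g \right)} \int\limits_{H} f \left( gh \cdot 0 \right) \sinh^{\alpha - k} d \left( gh \cdot 0, u_{gH} \right) \mathrm{d}h \, \mathrm{d}g .$$
Here we used \eqref{WeightedKPlaneTransformGroupEquation}, with the normalisation of $\mathrm{d}h$ on $H$ chosen so that $H \ni h \mapsto h \cdot 0$ pushes it to the Riemannian measure on $\mathbb{H}^k$. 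Equivalently, we may simply write the left side as $\int_G F \left( g \right) \mathrm{d}g$ with $F(g) = \varphi(gH)\, f(g\cdot 0)\, w(g \cdot 0, gH)$, where $w(x,\xi)=\sinh^{\alpha-k}d(x,u_\xi)$. The cleanest route is to observe that both sides equal the single integral
$$\int\limits_{G} f \left( g \cdot 0 \right) \varphi \left( g \cdot \mathbb{H}^k \right) w \left( g \cdot 0, g \cdot \mathbb{H}^k \right) \mathrm{d}g ,$$
up to normalisation of Haar measures.

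For the left side, disintegrate $\mathrm{d}g$ along $G \to \faktor{G}{H}$. We have $\int_G \Psi(g)\,\mathrm{d}g = \int_{G/H} \int_H \Psi(gh)\,\mathrm{d}h\,\mathrm{d}(gH)$, which is exactly the construction with $f_1$ and $\bar f_1$. Since $gh \cdot \mathbb{H}^k = g \cdot \mathbb{H}^k$, the inner integral is $\varphi \left( \xi \right) R_{\alpha}f \left( \xi \right)$. For the right side, disintegrate along $G \to \faktor{G}{K} = \mathbb{H}^n$. Since $gk \cdot 0 = g \cdot 0$, the inner integral $\int_K \varphi \left( gk \cdot \mathbb{H}^k \right) w \left( g \cdot 0, gk \cdot \mathbb{H}^k \right) \mathrm{d}k$ is precisely $R_{\alpha}^*\varphi \left( g \cdot 0 \right)$ by \eqref{DualTransformGroupEquation}. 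Then the quotient integral formula $\int_{\mathbb{H}^n} = \int_G \left( \cdot \right) \left( g \cdot 0 \right) \mathrm{d}g$ gives the right side.

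The interchange of the orders of integration is justified by Tonelli's theorem for nonnegative $f$ and $\varphi$, and then by linearity for ``nice'' functions, meaning those for which the integrals converge absolutely. The main point requiring care is the compatibility of the normalisations. The measure $\mathrm{d}h$ on $H$ must induce the Riemannian measure $\mathrm{d}_{\xi}x$ on each $\xi$ via $h \mapsto gh \cdot 0$; this holds because $g$ acts isometrically. The measure on $K$ must be the one used in Definition \ref{DualTransformDefinition}. If these normalisations are fixed inconsistently, the identity holds only up to a constant. So we will fix the invariant measures on $H$ and $K$ so that this constant is $1$, and then verify the two disintegrations above.
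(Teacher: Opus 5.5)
Your argument is correct and is essentially the paper's. Both proofs pass through the integral over $G$ of the lifted incidence function $f(g\cdot 0)\,\varphi(g\cdot\mathbb{H}^k)\sinh^{\alpha-k} d(g\cdot 0, u_{g\cdot\mathbb{H}^k})$: the paper does the $G/H$ disintegration by hand (inserting $\int_K$, substituting $\lambda = ghk^{-1}$ and using $\int_H f_1(\lambda k h^{-1})/\bar{f}_1(\lambda k h^{-1})\,\mathrm{d}h = 1$), whereas you quote the quotient integral formula. One correction to your opening remark: the weight is not $G$-invariant, only $K$-invariant, because $u_\xi$ is the foot of the perpendicular from the fixed origin, so $u_{g\xi} \neq g u_\xi$ in general. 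As you say yourself, though, the proof only uses that the weight is a fixed function on the incidence set, so nothing breaks.
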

		\begin{proof}
			Throughout this proof, we fix $f_1: G \rightarrow \left( 0, \infty \right)$ as the function used in defining the measure on $\Xi_k \left( \mathbb{H}^n \right)$. Also, due to the quotient structure of $\Xi_k \left( \mathbb{H}^n \right)$, any submanifold $\xi \in \Xi_k \left( \mathbb{H}^n \right)$ can be realised as a coset $gH$, for some $g \in G$. Consequently, integration of a function $f$ over the submanifold $\xi$ can be realised as integrating the translated function $f \left( g \cdot \right)$ over $H$ in the following manner:
			\begin{equation}
				\label{WeighteKPlaneTransformOverH}
				R_{\alpha}f \left( \xi \right) = \int\limits_{H} f \left( gh \cdot 0 \right) \sinh^{\alpha - k} d \left( gh \cdot 0, u_{g \cdot \mathbb{H}^k} \right) \mathrm{d}h.
			\end{equation}
			Thus, we have, for ``nice" functions $f: \mathbb{H}^n \rightarrow \mathbb{C}$ and $\varphi: \Xi_k \left( \mathbb{H}^n \right) \rightarrow \mathbb{C}$,
            \begin{align*}
                &\int\limits_{\Xi_k \left( \mathbb{H}^n \right)} R_{\alpha}f \left( \xi \right) \varphi \left( \xi \right) \mathrm{d}\xi \\
                &= \int\limits_{G} \frac{\left( R_{\alpha}f \circ \pi \right) \left( g \right) \left( \varphi \circ \pi \right) \left( g \right) f_1 \left( g \right)}{\bar{f}_1 \left( g \right)} \mathrm{d}g \\
                &= \int\limits_{G} \frac{\left( \varphi \circ \pi \right) \left( g \right) f_1 \left( g \right)}{\bar{f}_1 \left( g \right)} \int\limits_{H} f \left( gh \cdot 0 \right) \sinh^{\alpha - k} d \left( gh \cdot 0, u_{g \cdot \mathbb{H}^k} \right) \mathrm{d}h \mathrm{d}g.
            \end{align*}
			Since $K$ is a compact group with normalized measure, we have,
            \begin{align*}
                &\int\limits_{\Xi_k \left( \mathbb{H}^n \right)} R_{\alpha}f \left( \xi \right) \varphi \left( \xi \right) \mathrm{d}\xi \\
                &= \int\limits_{K} \int\limits_{G} \frac{\left( \varphi \circ \pi \right) \left( g \right) f_1 \left( g \right)}{\bar{f}_1 \left( g \right)} \int\limits_{H} f \left( ghk^{-1} \cdot 0 \right) \sinh^{\alpha - k} d \left( g h k^{-1} \cdot 0, u_{g \cdot \mathbb{H}^k} \right).
            \end{align*}
			We now use $ghk^{-1} = \lambda$ and use the left-invariance of the measure on $G$ to obtain
            \begin{align*}
                &\int\limits_{\Xi_k \left( \mathbb{H}^n \right)} R_{\alpha}f \left( \xi \right) \varphi \left( \xi \right) \mathrm{d}\xi \\
                &= \int\limits_{K} \int\limits_{H} \int\limits_{G} f \left( \lambda \cdot 0 \right) \sinh^{\alpha - k} d \left( \lambda \cdot 0, u_{\lambda k h^{-1} \cdot \mathbb{H}^k} \right) \frac{\left( \varphi \circ \pi \right) \left( \lambda k h^{-1} \right) f_1 \left( \lambda k h^{-1} \right)}{\bar{f}_1 \left( \lambda k h^{-1} \right)} \mathrm{d}\lambda \mathrm{d}h \mathrm{d}k.
            \end{align*}
			Using the fact that $H$ fixes $\mathbb{H}^k$ and the definition of $f_1$, we obtain
			$$\int\limits_{\Xi_k \left( \mathbb{H}^n \right)} R_{\alpha}f \left( \xi \right) \varphi \left( \xi \right) \mathrm{d}\xi = \int\limits_{G} f \left( \lambda \cdot 0 \right) \left( \int\limits_{K} \varphi \left( \lambda k H \right) \sinh^{\alpha - k} d \left( \lambda \cdot 0,  u_{\lambda k \cdot \mathbb{H}^k} \right) \mathrm{d}k \right) \mathrm{d}\lambda.$$
			As $k \in K$ varies, the collection $\left\lbrace \lambda k H \right\rbrace$ becomes the set of those submanifolds $\xi \in \Xi_k \left( \mathbb{H}^n \right)$ that pass through $\lambda \cdot 0$. That is, after passing to $\mathbb{H}^n$, we have,
			$$\int\limits_{\Xi_k \left( \mathbb{H}^n \right)} R_{\alpha}f \left( \xi \right) \varphi \left( \xi \right) \mathrm{d}\xi = \int\limits_{\mathbb{H}^n} f \left( x \right) \left( \int\limits_{\xi \ni x} \varphi \left( \xi \right) \sinh^{\alpha - k} d \left( x, u_{\xi} \right) \mathrm{d}\xi \right) \mathrm{d}x.$$
			This proves the duality of $R_{\alpha}$ and $R_{\alpha}^*$.
		\end{proof}
		In this article, we deal with radial functions to obtain $L^p$-improving boundedness of the operator $R_{\alpha}$. To achieve this end, we first describe radial functions on $\mathbb{H}^n$ and then give a representation of $R_{\alpha}f$ as a one-dimensional integral whenever $f$ is radial.
		\begin{definition}[Radial functions]
			\label{RadialFunctionDefinition}
			A function $f: \mathbb{H}^n \rightarrow \mathbb{C}$ is radial if there is a function $\tilde{f}: \left[ 1, \infty \right) \rightarrow \mathbb{C}$ such that $f \left( x \right) = \tilde{f} \left( \cosh d \left( 0, x \right) \right)$.
		\end{definition}
		With the definition of radial functions at hand, we now provide a ``formula" for $R_{\alpha}f$, when $f$ is radial. For the same, we require the cosine rule on the real hyperbolic space, the exact statement of which can be found in Theorem 3.5.3 of \cite{Ratcliffe}.
		\begin{theorem}[Cosine Rule]
			\label{CosineRuleHn}
			Let $p, q$ and $r$ be points in $\mathbb{H}^n$. Consider the geodesic triangle formed by these points with angles $\alpha, \beta$ and $\gamma$, and length of sides opposite to these angles given by $a, b$ and $c$. Then, we have,
			\begin{equation}
				\label{CosineRuleHnEquation}
				\cosh c = \cosh a \cosh b - \sinh a \sinh b \cos \gamma.
			\end{equation}
			Particularly, when $\gamma = \frac{\pi}{2}$, so that the geodesic triangle is a right-angled triangle, we have,
			\begin{equation}
				\label{PythagorasTheoremHnEquation}
				\cosh c = \cosh a \cosh b,
			\end{equation}
			where, $c$ is the length of the ``hypotenuse" (side opposite to $\frac{\pi}{2}$ angle).
		\end{theorem}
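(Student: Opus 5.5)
The statement to be proved is the Cosine Rule, Theorem~\ref{CosineRuleHn}. I would work in the upper sheet of hyperboloid model, where distances are read off from the Lorentz form: for $x, y \in \mathbb{H}^n$ one has $\cosh d \left( x, y \right) = - \left[ x, y \right]$.

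\textbf{Normalisation.} Since $G = SO_0 \left( n, 1 \right)$ acts transitively by isometries and preserves $\left[ \cdot, \cdot \right]$, I would first move the vertex $r$ (the one carrying the angle $\gamma$) to $e_{n + 1}$.

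\textbf{Parametrising the two sides.} The geodesics from $e_{n + 1}$ are $t \mapsto \cosh t \, e_{n + 1} + \sinh t \, v$, where $v$ is a unit tangent vector. The tangent space at $e_{n + 1}$ is $\mathbb{R}^n \times \left\lbrace 0 \right\rbrace$, on which the Lorentz form is the Euclidean inner product. These are indeed the geodesics: they are the intersections of $\mathbb{H}^n$ with $2$-planes through the origin, by Proposition~\ref{TGSHnDescription} with $k = 1$, and one checks that they have unit speed. Hence
$$p = \cosh b \, e_{n + 1} + \sinh b \, v, \qquad q = \cosh a \, e_{n + 1} + \sinh a \, w,$$
with $v, w$ unit vectors in $\mathbb{R}^n$ satisfying $\langle v, w \rangle = \cos \gamma$. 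This uses that $a = d \left( q, r \right)$ and $b = d \left( p, r \right)$, the sides adjacent to $\gamma$.

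\textbf{Computing $\cosh c$.} Expanding the form, and using $\left[ e_{n + 1}, e_{n + 1} \right] = -1$ together with the orthogonality of $e_{n + 1}$ to $v$ and $w$, gives
$$\cosh c = - \left[ p, q \right] = \cosh a \cosh b - \sinh a \sinh b \cos \gamma.$$
This is \eqref{CosineRuleHnEquation}. Setting $\gamma = \frac{\pi}{2}$ gives \eqref{PythagorasTheoremHnEquation}.

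\textbf{Main obstacle.} The one non-formal point is the identity $\cosh d = - \left[ \cdot, \cdot \right]$. After a further rotation in $K$ it suffices to check it for $x = e_{n + 1}$ and a point on one of the geodesics above. There the induced metric length of the curve on $\left[ 0, t \right]$ equals $t$, and $- \left[ e_{n + 1}, \cosh t \, e_{n + 1} + \sinh t \, v \right] = \cosh t$.

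It also has to be shown that these curves are length minimising. This follows because the Lorentz reflection fixing the plane spanned by $e_{n + 1}$ and $v$ is an isometry, so the curve is the fixed set of an isometry and hence a geodesic. Uniqueness of geodesics in the simply connected space $\mathbb{H}^n$, which has negative curvature, then gives minimality.

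Alternatively, I would simply cite Theorem 3.5.3 of \cite{Ratcliffe}, as the paper does.
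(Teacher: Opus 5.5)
The paper does not prove this statement; it quotes Theorem 3.5.3 of \cite{Ratcliffe}. Your argument is a correct, self-contained derivation in the hyperboloid model. Move $r$ to $e_{n+1}$, write $p = \cosh b \, e_{n+1} + \sinh b \, v$ and $q = \cosh a \, e_{n+1} + \sinh a \, w$ with $\langle v, w \rangle = \cos\gamma$, and expand $-\left[ p, q \right]$; the law of cosines then follows in one line. You also correctly single out $\cosh d(x,y) = -\left[ x, y \right]$ as the only non-formal input. Compared with the citation, your route works directly with the metric the paper actually uses, namely the Riemannian metric induced from the Lorentz form in Section~\ref{HyperbolicSpacePreliminariesSection}. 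Ratcliffe instead introduces the hyperbolic distance through that very formula and relates it to arc length separately, so invoking his theorem silently relies on that identification. The citation, in exchange, buys brevity.

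Two small points in your treatment of the distance formula. First, uniqueness of geodesics (Cartan--Hadamard) gives minimality only together with the existence of a minimizing geodesic (Hopf--Rinow). So you are implicitly using completeness of $\mathbb{H}^n$, which holds because the space is homogeneous. Second, reducing to $x = e_{n+1}$ requires $G$-invariance of both sides, not a rotation in $K$.

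You can avoid the curvature and completeness machinery altogether. Let $\sigma = (X, T)$ be a curve on the hyperboloid from $e_{n+1}$ to $y$. Then $|X|^2 = T^2 - 1$ and $X \cdot X' = T T'$, so Cauchy--Schwarz gives $\left[ \sigma', \sigma' \right] = |X'|^2 - T'^2 \geq T'^2 / (T^2 - 1)$. Hence the length of $\sigma$ is at least $\cosh^{-1} y_{n+1} = \cosh^{-1} \left( - \left[ e_{n+1}, y \right] \right)$, with equality along your curve $t \mapsto \cosh t \, e_{n+1} + \sinh t \, v$.
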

		We now provide the formula for $R_{\alpha}f$ when $f$ is radial.
		\begin{proposition}
			\label{FormulaForRadialFunctionsRAlpha}
			Let $f: \mathbb{H}^n \rightarrow \mathbb{C}$ be a ``nice" radial function. Then, we have for any $\xi \in \Xi_k \left( \mathbb{H}^n \right)$,
			\begin{equation}
				\label{FormualRadialFunctionsEquation}
				R_{\alpha}f \left( \xi \right) = \frac{\left| \mathbb{S}^{k - 1} \right|}{\cosh^{\alpha - 1} d \left( 0, \xi \right)} \int\limits_{\cosh d \left( 0, \xi \right)}^{\infty} \tilde{f} \left( t \right) \left( t^2 - \cosh^2 d \left( 0, \xi \right) \right)^{\frac{\alpha}{2} - 1} \mathrm{d}t.
			\end{equation}
		\end{proposition}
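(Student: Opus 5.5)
We use polar coordinates on $\xi$ centred at the foot of the perpendicular, then the hyperbolic Pythagoras theorem, then a change of variables.

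Fix $\xi \in \Xi_k \left( \mathbb{H}^n \right)$ and write $s = d \left( 0, \xi \right) = d \left( 0, u_{\xi} \right)$. By Remark \ref{SmallSubmanifoldsSame}, $\xi$ is itself a copy of $\mathbb{H}^k$. We therefore apply the polar decomposition of Proposition \ref{PolarDecompositionTheorem} on $\xi$, with $u_{\xi}$ as the origin and $T_{u_{\xi}}\xi$ as the tangent space. Every $x \in \xi$ is written $x = \exp_{u_{\xi}} \left( r \omega \right)$ with $\omega \in \mathbb{S}^{k - 1} \subset T_{u_{\xi}}\xi$ and $r = d \left( x, u_{\xi} \right)$. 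This gives
\begin{equation*}
R_{\alpha}f \left( \xi \right) = \int\limits_{\mathbb{S}^{k - 1}} \int\limits_{0}^{\infty} f \left( \exp_{u_{\xi}} \left( r\omega \right) \right) \sinh^{\alpha - k} r \, \sinh^{k - 1} r \ \mathrm{d}r \mathrm{d}\omega .
\end{equation*}
The weight combines with the Jacobian to give $\sinh^{\alpha - 1} r$. Here we are using the normalisation of the measure on $\mathbb{S}^{k-1}$ for which Proposition \ref{PolarDecompositionTheorem} carries no extra constant; this is what produces the factor $\left| \mathbb{S}^{k - 1} \right|$.

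Next we compute $d \left( 0, x \right)$. By Theorem \ref{FootOfPerpendicularTGS}, the geodesic from $0$ to $u_{\xi}$ is orthogonal to $\xi$ at $u_{\xi}$. Since $\xi$ is totally geodesic, the geodesic from $u_{\xi}$ to $x$ lies in $\xi$. Hence the triangle with vertices $0$, $u_{\xi}$, $x$ has a right angle at $u_{\xi}$. The Pythagoras relation \eqref{PythagorasTheoremHnEquation} then gives
\[
\cosh d \left( 0, x \right) = \cosh s \cosh r .
\]
(When $x = u_{\xi}$, or when $s = 0$ and the triangle degenerates, this holds trivially.) So $f \left( x \right) = \tilde{f} \left( \cosh s \cosh r \right)$ does not depend on $\omega$, and the $\omega$-integral contributes exactly $\left| \mathbb{S}^{k - 1} \right|$:
\begin{equation*}
R_{\alpha}f \left( \xi \right) = \left| \mathbb{S}^{k - 1} \right| \int\limits_{0}^{\infty} \tilde{f} \left( \cosh s \cosh r \right) \sinh^{\alpha - 1} r \ \mathrm{d}r .
\end{equation*}

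Finally we substitute $t = \cosh s \cosh r$, so that $\mathrm{d}t = \cosh s \sinh r \, \mathrm{d}r$. We also have
\[
\sinh r = \frac{\left( t^2 - \cosh^2 s \right)^{1/2}}{\cosh s},
\]
and the limits become $t \in \left[ \cosh s, \infty \right)$. Then
\[
\sinh^{\alpha - 1} r \, \mathrm{d}r = \sinh^{\alpha - 2} r \, \frac{\mathrm{d}t}{\cosh s} = \frac{\left( t^2 - \cosh^2 s \right)^{\frac{\alpha}{2} - 1}}{\cosh^{\alpha - 1} s} \, \mathrm{d}t .
\]
This is exactly the stated formula \eqref{FormualRadialFunctionsEquation}.

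The computations are routine. The only step needing care is the geometric one: showing that the triangle $0, u_{\xi}, x$ is right-angled at $u_{\xi}$ for every $x \in \xi$, and that polar coordinates on $\xi$ about $u_{\xi}$ really are the intrinsic ones with Jacobian $\sinh^{k-1} r$. Both follow from Theorem \ref{FootOfPerpendicularTGS} and Remark \ref{SmallSubmanifoldsSame}, because the geodesic in $\xi$ from $u_{\xi}$ is a geodesic of $\mathbb{H}^n$ with initial velocity in $T_{u_{\xi}}\xi$.
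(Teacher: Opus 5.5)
Your proposal is correct and follows essentially the same route as the paper. Both arguments use the hyperbolic Pythagoras relation $\cosh d(0,x) = \cosh d(0,\xi)\cosh d(x,u_\xi)$, then polar coordinates on $\xi$ centred at $u_\xi$ to reduce to $|\mathbb{S}^{k-1}|\int_0^\infty \tilde f(\cosh d(0,\xi)\cosh r)\sinh^{\alpha-1} r\,\mathrm{d}r$, and finally the substitution $t=\cosh d(0,\xi)\cosh r$; the only difference is that you apply the polar decomposition before the Pythagoras step rather than after, which is immaterial.
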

		\begin{proof}
			Given a $k$-dimensional submanifold $\xi \in \Xi_k \left( \mathbb{H}^n \right)$, Theorem \ref{FootOfPerpendicularTGS} guarantees the existence of a point $u_{\xi} \in \xi$ that is closest to $0 \in \mathbb{H}^n$. Further, we know that the geodesic joining $0$ and $u_{\xi}$ is orthogonal to $\xi$ at $u_{\xi}$. Therefore, for any $x \in \xi$, the geodesic triangle formed by $0$, $x$, and $u_{\xi}$ is a right-angled triangle (see Figure \ref{GeodesicTriangleHnFigure} for a diagrammatic representation in the Poincar\'{e} ball model) with the length of hypotenuse given by $d \left( 0, x \right)$. Therefore, using Equation \eqref{PythagorasTheoremHnEquation}, we get
			\begin{equation}
				\label{PythagorasSpecialHnXik}
				\cosh d \left( 0, x \right) = \cosh d \left( 0, \xi \right) \cosh d \left( x, u_{\xi} \right).
			\end{equation}
			\begin{figure}[ht!]
				\centering
				\includegraphics[scale=1.0]{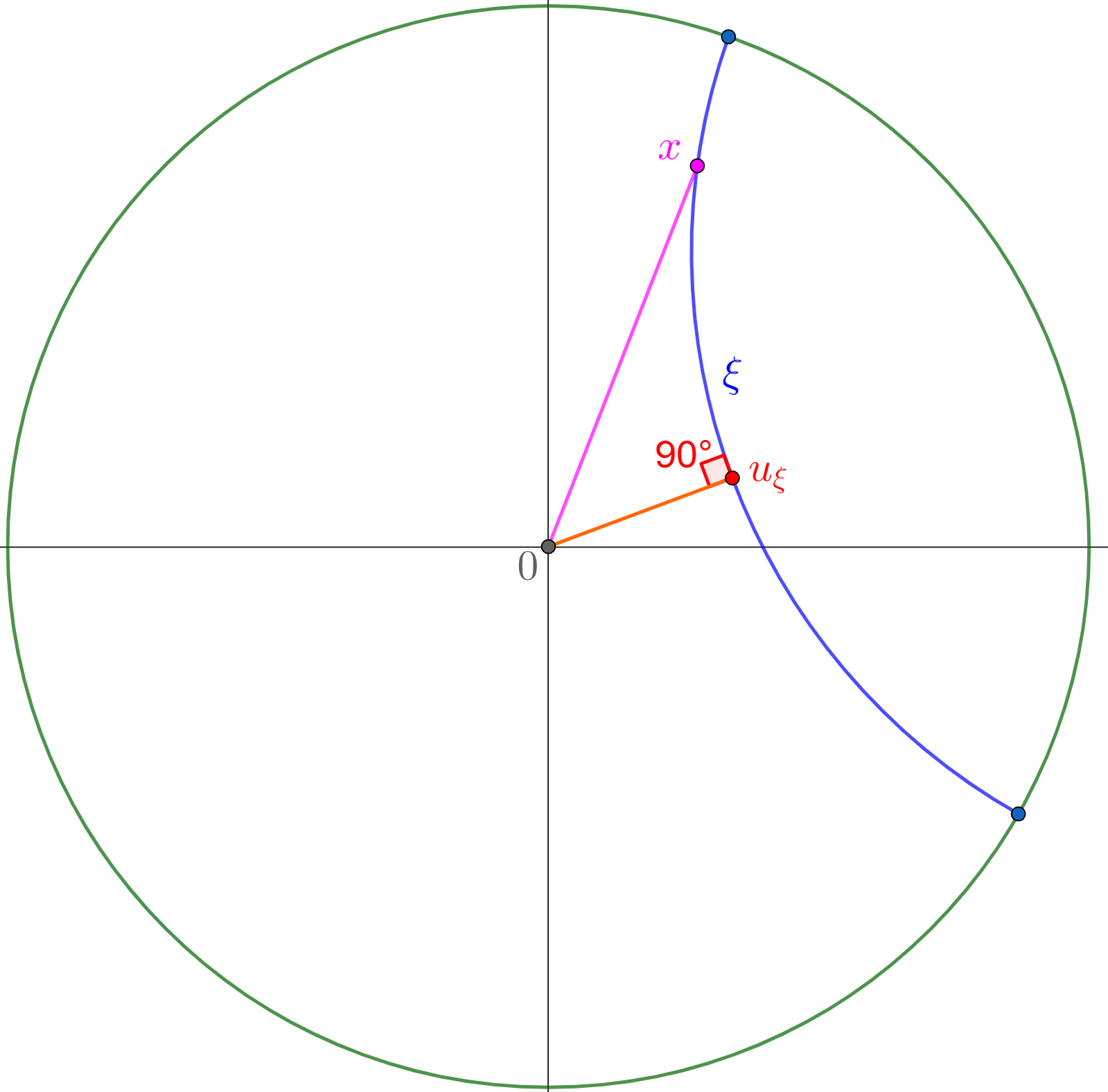}
				\caption{The geodesic triangle formed by $0$, $x \in \xi$ and $u_{\xi}$, shown in the Poincar\'{e} ball model.}
				\label{GeodesicTriangleHnFigure}
			\end{figure}
			Combining this observation with the definition of radial functions, we get
			$$R_{\alpha}f \left( \xi \right) = \int\limits_{\xi} f \left( \cosh d \left( 0, \xi \right) \cosh d \left( x, u_{\xi} \right) \right) \sinh^{\alpha - k} d \left( x, u_{\xi} \right) \mathrm{d}x.$$
			Now, using the polar decomposition on $\xi$ keeping $u_{\xi}$ as the center, and observing that the integrand depends only on the distance from $u_{\xi}$, we get
			$$R_{\alpha}f \left( \xi \right) = \left| \mathbb{S}^{k - 1} \right| \int\limits_{0}^{\infty} \tilde{f} \left( \cosh d \left( 0, \xi \right) \cosh r \right) \sinh^{\alpha - 1} r \ \mathrm{d}r.$$
			Next, we substitute $\cosh d \left( 0, \xi \right) \cosh r = t$ and simplify to obtain
			$$R_{\alpha}f \left( \xi \right) = \left| \mathbb{S}^{k - 1} \right| \int\limits_{\cosh d \left( 0, \xi \right)}^{\infty} \tilde{f} \left( t \right) \left( \frac{t^2}{\cosh^2 d \left( 0, \xi \right)} - 1 \right)^{\frac{\alpha}{2} - 1} \frac{\mathrm{d}t}{\cosh d \left( 0, \xi \right)}.$$
			This is same as Equation \eqref{FormualRadialFunctionsEquation}.
		\end{proof}
		Next, We get a formula for the dual transform of radial functions on $\Xi_k \left( \mathbb{H}^n \right)$. we begin by recalling the definition of radial functions.
		\begin{definition}
			\label{RadialFunctionsXik}
			A function $\varphi: \Xi_k \left( \mathbb{H}^n \right) \rightarrow \mathbb{C}$ is radial if there is a function $\tilde{\varphi}: \left[ 0, \infty \right) \rightarrow \mathbb{C}$ such that $\varphi \left( \xi \right) = \tilde{\varphi} \left( \sinh d \left( 0, \xi \right) \right)$.
		\end{definition}
		\begin{proposition}
			\label{RadialFunctionsDualFormula}
			Let $\varphi: \Xi_k \left( \mathbb{H}^n \right) \rightarrow \mathbb{C}$ be a ``nice" radial function. Then, for any $x \in \mathbb{H}^n$, we have,
			\begin{equation}
				\label{RadialFunctionsDualFormuaEquation}
                \begin{aligned}
                    R_{\alpha}^*\varphi \left( x \right) = &\frac{\left| \mathbb{S}^{k - 1} \right| \left| \mathbb{S}^{n - k - 1} \right|}{\left| \mathbb{S}^{n - 1} \right|} \frac{1}{\sinh^{n - 2} d \left( 0, x \right)} \times \\
                    &\int\limits_{0}^{\sinh d \left( 0, x \right)} \frac{\tilde{\varphi} \left( s \right)}{\left( 1 + s^2 \right)^{\frac{\alpha - k}{2}}} s^{n - k - 1} \left( \sinh^2 d \left( 0, x \right) - s^2 \right)^{\frac{\alpha}{2} - 1} \mathrm{d}s.
                \end{aligned}
			\end{equation}
		\end{proposition}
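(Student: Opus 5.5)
The plan is to start from the group-theoretic form \eqref{DualTransformGroupEquation} of $R_{\alpha}^*$ and reduce everything to the single angle that a $k$-plane through $x$ makes with the geodesic from $x$ to $0$. Since $\varphi$ is radial and the weight $\sinh^{\alpha-k} d(x,u_{\xi})$ is invariant under rotations fixing $0$, $R_{\alpha}^*\varphi$ is radial. We may therefore assume $x=g_r\cdot 0$ with $r=d(0,x)$. The case $x=0$ is degenerate: there $u_\xi=0$ for every $\xi\ni 0$, and the formula is read as a limit.

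\textbf{Step 1 (geometry of a single plane).} Fix $\xi\ni x$ and set $\theta=d(0,\xi)$. The triangle $0,u_{\xi},x$ has a right angle at $u_{\xi}$, so Equation \eqref{PythagorasSpecialHnXik} gives $\cosh r=\cosh\theta\cosh d(x,u_\xi)$. Hence
$$\sinh^2 d(x,u_\xi)=\frac{\cosh^2 r-\cosh^2\theta}{\cosh^2\theta}=\frac{\sinh^2 r-s^2}{1+s^2},\qquad s=\sinh\theta .$$
This gives
$$\sinh^{\alpha-k}d(x,u_\xi)=(\sinh^2r-s^2)^{\frac{\alpha-k}{2}}(1+s^2)^{-\frac{\alpha-k}{2}},$$
which produces the factor $(1+s^2)^{-(\alpha-k)/2}$ in \eqref{RadialFunctionsDualFormuaEquation}.

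Next let $\psi\in[0,\pi/2]$ be the angle at $x$ between the geodesic from $x$ to $0$ and the plane $T_x\xi$. By the right-triangle identity $\sinh(\text{leg})=\sinh(\text{hypotenuse})\sin(\text{opposite angle})$ (Ratcliffe, same section as Theorem \ref{CosineRuleHn}), we get $s=\sinh r\sin\psi$. Thus the integrand depends on $\xi$ only through $\psi$.

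\textbf{Step 2 (distribution of $\psi$).} Planes through $x$ correspond to $k$-subspaces $T_x\xi\subset T_x\mathbb{H}^n\cong\mathbb{R}^n$, with the normalized $K$-invariant measure. The push-forward of this measure under $\psi$ equals the law of the angle between a fixed $k$-plane and a uniformly distributed unit vector. Writing $\omega=\cos\psi\,\eta+\sin\psi\,\zeta$ with $\eta\in\mathbb{S}^{k-1}$ and $\zeta\in\mathbb{S}^{n-k-1}$ gives this law explicitly:
$$\frac{|\mathbb{S}^{k-1}||\mathbb{S}^{n-k-1}|}{|\mathbb{S}^{n-1}|}\cos^{k-1}\psi\,\sin^{n-k-1}\psi\,\mathrm{d}\psi .$$
This accounts for the constant in \eqref{RadialFunctionsDualFormuaEquation}.

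\textbf{Step 3 (change of variables).} Substitute $s=\sinh r\sin\psi$. Then $\mathrm{d}s=\sinh r\cos\psi\,\mathrm{d}\psi$ and $\cos\psi=\sqrt{\sinh^2r-s^2}/\sinh r$. It follows that
$$\cos^{k-1}\psi\sin^{n-k-1}\psi\,\mathrm{d}\psi=\frac{s^{n-k-1}(\sinh^2r-s^2)^{\frac{k-2}{2}}}{\sinh^{n-2}r}\,\mathrm{d}s .$$
Multiplying by the weight from Step 1 combines the exponents $\frac{k-2}{2}+\frac{\alpha-k}{2}=\frac{\alpha}{2}-1$. As $\psi$ runs over $[0,\pi/2]$, $s$ runs over $[0,\sinh r]$, and we obtain exactly \eqref{RadialFunctionsDualFormuaEquation}.

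\textbf{Main obstacle.} I expect the main difficulty to be Step 2. One must justify rigorously that the measure in Definition \ref{DualTransformDefinition}, induced from $K$ through the base point $x$ rather than $0$ (that is, the $gk$ in \eqref{DualTransformGroupEquation}), pushes forward to the stated density in $\psi$. The cleanest route is to note that $k\mapsto (\mathrm{d}g)(k\cdot\mathbb{R}^k)$ is the uniform measure on the Grassmannian of $T_x\mathbb{H}^n$. Then one computes the distribution of the angle with a fixed line by the spherical decomposition above, using the duality between "random plane versus fixed vector" and "fixed plane versus random vector".
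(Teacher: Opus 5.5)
Your proposal is correct and is essentially the paper's argument. The paper also pulls back by $(gk)^{-1}$, turning the Haar integral over $K$ into a normalized integral over $\omega\in\mathbb{S}^{n-1}$ of the distance from $\cosh r\,e_{n+1}+\sinh r\,\omega$ to the fixed plane $\mathbb{H}^k$; this is exactly your duality between a random plane with a fixed vector and a fixed plane with a random vector. It then uses the same bispherical coordinates and the substitution $s=\sinh r\sin\theta$.

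The only difference is how you get $s=\sinh r\sin\psi$. The right-triangle sine identity tacitly uses the true but unstated fact that the triangle's angle at $x$ equals the angle between the direction $v$ towards $0$ and $T_x\xi$. You can instead get it directly, as the paper effectively does: write $0=\cosh r\,x+\sinh r\,v$ in the hyperboloid model. Then $\sinh^2 d(0,\xi)$ is the Lorentz square-norm of the component of $0$ orthogonal to the $(k+1)$-space of $\xi$, which is $\sinh^2 r$ times the squared length of the component of $v$ normal to $T_x\xi$.
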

		\begin{proof}
			Given $x \in \mathbb{H}^n$, let $g \in G$ be such that $g \cdot 0 = x$. Then, it is clear that any $\xi \in \Xi_k \left( \mathbb{H}^n \right)$ with $x \in \xi$ can be given by $gk \cdot \mathbb{H}^k$, for some $k \in SO \left( n \right)$. That is, we have,
			$$R_{\alpha}^*\varphi \left( x \right) = \int\limits_{SO \left( n \right)} \tilde{\varphi} \left( \sinh d \left( 0, gk \cdot \mathbb{H}^k \right) \right) \sinh^{\alpha - k} d \left( g \cdot 0, u_{gk \cdot \mathbb{H}^k} \right) \mathrm{d}k.$$
			As shown in Figure \ref{GeodesicTriangleHnFigure}, the geodesic triangle formed by $0$, $x$ and $u_{gk \cdot \mathbb{H}^k}$ is a right-angled triangle. Therefore, by using Equation \eqref{PythagorasTheoremHnEquation}, we get
			$$\cosh d \left( 0, x \right) = \cosh d \left( 0, gk \cdot \mathbb{H}^k \right) \cosh d \left( x, u_{gk \cdot \mathbb{H}^k} \right).$$
			This gives
			$$\sinh d \left( x, u_{gk \cdot \mathbb{H}^k} \right) = \left( \frac{\cosh^2 d \left( 0, x \right)}{\cosh^2 d \left( 0, gk \cdot \mathbb{H}^k \right)} - 1 \right)^{\frac{1}{2}}.$$
			Now, using the fact the $G$ and $K = SO \left( n \right)$ act on $\mathbb{H}^n$ through isometries and that $K$ is a unimodular group (see Corollary 2.28 of \cite{FollandAHA}), we get
			$$R_{\alpha}^*\varphi \left( x \right) = \int\limits_{SO \left( n \right)} \tilde{\varphi} \left( \sinh d \left( kg^{-1} \cdot 0, \mathbb{H}^k \right) \right) \left( \frac{\cosh^2 d \left( 0, x \right)}{\cosh^2 d \left( kg^{-1} \cdot 0, \mathbb{H}^k \right)} - 1 \right)^{\frac{\alpha - k}{2}} \mathrm{d}k.$$
			For the sake of convenience involving the computations, we shall restrict ourselves to the upper sheet of hyperboloid model of $\mathbb{H}^n$ throughout this proof. Consequently, by $0$ of $\mathbb{H}^n$, we mean $e_{n + 1} \in \mathbb{R}^{n + 1}$. We write $g^{-1} \cdot 0 = \cosh \left( d \left( 0, x \right) \right) e_{n + 1} + \sinh \left( d \left( 0, x \right) \right) \sigma$, for some $\sigma \in \mathbb{S}^{n - 1}$ (see Figure \ref{DecompositionHnFigure}). Then, as $k \in SO \left( n \right)$ varies, $k \cdot \sigma$ varies over $\mathbb{S}^{n - 1}$. Therefore, we get, after converting the integral over $SO \left( n \right)$ to an integral over $\mathbb{S}^{n - 1}$,
			\begin{align*}
				R_{\alpha}^*\varphi \left( x \right) = &\frac{1}{\left| \mathbb{S}^{n - 1} \right|} \int\limits_{\mathbb{S}^{n - 1}} \tilde{\varphi} \left( \sinh \left( d \left( \cosh \left( d \left( 0, x \right) \right) e_{n + 1} + \sinh \left( d \left( 0, x \right) \right) \omega, \mathbb{H}^k \right) \right) \right) \times \\
				&\left( \frac{\cosh^2 d \left( 0, x \right)}{\cosh^2 d \left( \cosh \left( d \left( 0, x \right) \right) e_{n + 1} + \sinh \left( d \left( 0, x \right) \right) \omega, \mathbb{H}^k \right)} - 1 \right)^{\frac{\alpha - k}{2}} \mathrm{d}\omega.
			\end{align*}
            
			\begin{figure}[ht!]
				\centering
				\includegraphics[scale=0.4]{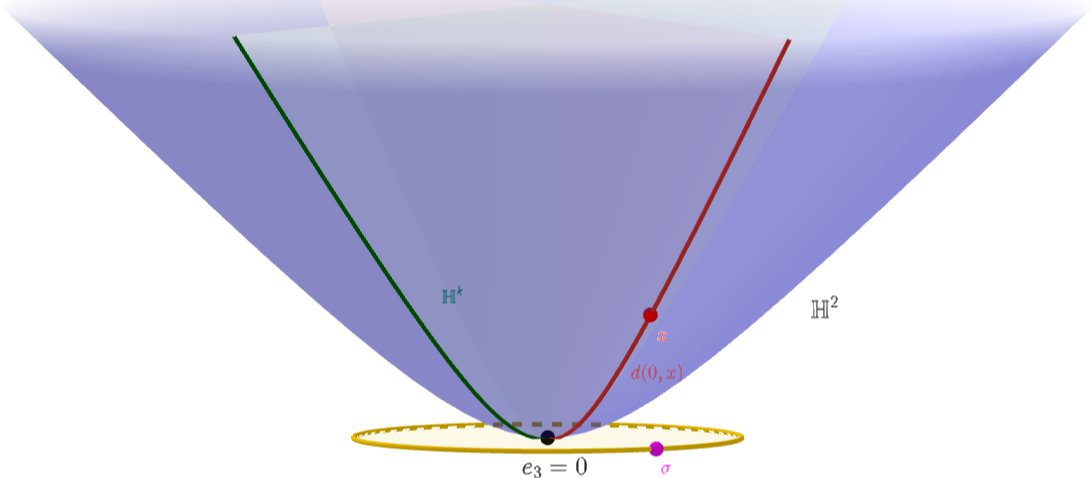}
				\caption{Writing a point $x \in \mathbb{H}^n$ in terms of $e_{n + 1}$ and a direction $\sigma \in \mathbb{S}^{n - 1}$.}
				\label{DecompositionHnFigure}
			\end{figure}
            
			We use the bispherical coordinates on $\mathbb{S}^{n - 1}$ and write $\omega = \cos \theta \zeta + \sin \theta \eta$, for $\zeta \in \mathbb{S}^{k - 1}$, $\eta \in \mathbb{S}^{n - k - 1}$ and $\theta \in \left[ 0, \frac{\pi}{2} \right]$. In this decomposition, we have $\mathrm{d}\omega = \sin^{n - k - 1} \theta \cos^{k - 1} \theta \ \mathrm{d} \theta \mathrm{d}\zeta \mathrm{d}\eta$. For more details on bispherical coordinates on $\mathbb{S}^{n - 1}$, we refer the reader to p. 22 of \cite{VilenkinBook}. Then, it is immediate that
			$$\sinh \left( d \left( \cosh \left( d \left( 0, x \right) \right) e_{n + 1} + \sinh \left( d \left( 0, x \right) \right) \omega, \mathbb{H}^k \right) \right) = \sinh d \left( 0, x \right) \sin \theta,$$
			and
			$$\cosh d \left( \cosh \left( d \left( 0, x \right) \right) e_{n + 1} + \sinh \left( d \left( 0, x \right) \right) \omega, \mathbb{H}^k \right) = \sinh^2 d \left( 0, x \right) \sin^2 \theta + 1.$$
			We observe from the above equations that the integral in the representation of $R_{\alpha}^*\varphi$ is independent of $\zeta$ and $\eta$ from the bispherical decomposition of $\mathbb{S}^{n - 1}$. Thus, upon simplifying the terms, we get
			\begin{align*}
				R_{\alpha}^*\varphi \left( x \right) &= \frac{\left| \mathbb{S}^{k - 1} \right| \left| \mathbb{S}^{n - k - 1} \right|}{\left| \mathbb{S}^{n - 1} \right|} \sinh^{\alpha - k} d \left( 0, x \right) \times \\
                &\int\limits_{0}^{\frac{\pi}{2}} \tilde{\varphi} \left( \sinh d \left( 0, x \right) \sin \theta \right) \frac{\left( \sinh^2 d \left( 0, x \right) - \sinh^2 d \left( 0, x \right) \sin^2 \theta \right)^{\frac{\alpha - k}{2}}}{\left( \sinh^2 d \left( 0, x \right) \sin^2 \theta + 1 \right)^{\frac{\alpha - k}{2}}} \times \\
                &\sin^{n - k - 1} \theta \cos^{k - 1} \theta \ \mathrm{d}\theta.
			\end{align*}
			Equation \eqref{RadialFunctionsDualFormuaEquation} is obtained at once by substituting $\sinh d \left( 0, x \right) \sin \theta = s$.
		\end{proof}
		Next, we provide an explicit computation of the dual transform of powers of hyperbolic sine and cosine that is of use in the sequel.
		\begin{proposition}
			\label{DualSinhCoshPowers}
			Let $\varphi: \Xi_k \left( \mathbb{H}^n \right) \rightarrow \mathbb{C}$ be defined by 
            $$\varphi \left( \xi \right) = \sinh^{\gamma_1} d \left( 0, \xi \right) \cosh^{\gamma_2} d \left( 0, \xi \right),$$
            with $\gamma_1 > k - n$. Then, we have,
			\begin{equation}
				\label{DualSinhCpshPowersEquation}
				\begin{aligned}
					R_{\alpha}^*\varphi \left( x \right) &= \frac{\left| \mathbb{S}^{k - 1} \right| \left| \mathbb{S}^{n - k - 1} \right|}{\left| \mathbb{S}^{n - 1} \right|} \frac{\Gamma \left( \frac{\gamma_1 + n - k}{2} \right) \Gamma \left( \frac{\alpha}{2} \right)}{\Gamma \left( \frac{\gamma_1 + \alpha + n - k}{2} \right)} \sinh^{\gamma_1 + \alpha - k} d \left( 0, x \right) \times \\
					&\cosh^{\gamma_2 - \alpha + k} d \left( 0, x \right) {}_2F_1 \left( \frac{\alpha - k - \gamma_2}{2}, \frac{\alpha}{2}; \frac{\gamma_1 + \alpha + n - k}{2}; \tanh^2 d \left( 0, x \right) \right),
				\end{aligned}
			\end{equation}
			where, $\Gamma \left( \cdot \right)$ is the Gamma function and ${}_2F_1$ is the Gauss' hypergeometric function.
		\end{proposition}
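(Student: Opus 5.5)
The plan is to plug $\varphi$ into the one-dimensional formula of Proposition \ref{RadialFunctionsDualFormula}, reduce the resulting integral to Euler's integral representation of ${}_2F_1$, and finish with a Pfaff transformation.

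First, $\varphi$ is radial in the sense of Definition \ref{RadialFunctionsXik}. Indeed, $\cosh d(0,\xi)=(1+\sinh^2 d(0,\xi))^{1/2}$, so $\varphi(\xi)=\tilde\varphi(\sinh d(0,\xi))$ with
$$\tilde\varphi(s)=s^{\gamma_1}(1+s^2)^{\gamma_2/2}.$$
Write $S=\sinh d(0,x)$. Proposition \ref{RadialFunctionsDualFormula} then gives
$$R_\alpha^*\varphi(x)=\frac{|\mathbb{S}^{k-1}||\mathbb{S}^{n-k-1}|}{|\mathbb{S}^{n-1}|}\,S^{2-n}\int_0^{S} s^{\gamma_1+n-k-1}(1+s^2)^{\frac{\gamma_2-\alpha+k}{2}}(S^2-s^2)^{\frac{\alpha}{2}-1}\,\mathrm{d}s .$$

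Next, I substitute $s=S\sqrt{u}$ with $u\in[0,1]$, so that $\mathrm{d}s=\tfrac{S}{2}u^{-1/2}\mathrm{d}u$. Collecting powers of $S$ gives
$$S^{2-n}\cdot S^{\gamma_1+n-k-1}\cdot S^{\alpha-2}\cdot S=S^{\gamma_1+\alpha-k},$$
and the integral becomes
$$\frac12\int_0^1 u^{b-1}(1-u)^{c-b-1}(1-zu)^{-a}\,\mathrm{d}u .$$
Here the parameters are
$$b=\tfrac{\gamma_1+n-k}{2},\qquad c=\tfrac{\gamma_1+\alpha+n-k}{2},\qquad a=\tfrac{\alpha-k-\gamma_2}{2},\qquad z=-S^2.$$
Euler's integral formula applies because $c>b>0$: the condition $b>0$ is exactly the hypothesis $\gamma_1>k-n$, which also gives integrability at $u=0$, and $c-b=\alpha/2>0$ since $\alpha>0$. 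Since $z=-S^2\le 0<1$, we obtain
$$\int_0^1 u^{b-1}(1-u)^{c-b-1}(1-zu)^{-a}\,\mathrm{d}u=\frac{\Gamma(b)\Gamma(\tfrac{\alpha}{2})}{\Gamma(c)}\,{}_2F_1(a,b;c;-S^2).$$
This already produces the Gamma factor $\Gamma\big(\tfrac{\gamma_1+n-k}{2}\big)\Gamma\big(\tfrac{\alpha}{2}\big)/\Gamma\big(\tfrac{\gamma_1+\alpha+n-k}{2}\big)$ in \eqref{DualSinhCpshPowersEquation}.

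The remaining step is to move the argument into $[0,1)$, which is where I expect the main care to be needed. I would use Pfaff's transformation
$${}_2F_1(a,b;c;z)=(1-z)^{-a}\,{}_2F_1\!\left(a,c-b;c;\tfrac{z}{z-1}\right).$$
With $z=-S^2$ we have $1-z=\cosh^2 d(0,x)$ and $z/(z-1)=\tanh^2 d(0,x)$, while $c-b=\alpha/2$. The prefactor is
$$(1-z)^{-a}=\cosh^{\gamma_2-\alpha+k}d(0,x),$$
and the hypergeometric function becomes ${}_2F_1\big(\tfrac{\alpha-k-\gamma_2}{2},\tfrac{\alpha}{2};\tfrac{\gamma_1+\alpha+n-k}{2};\tanh^2 d(0,x)\big)$. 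Combined with $S^{\gamma_1+\alpha-k}$ and the Gamma factor, this is exactly the shape of \eqref{DualSinhCpshPowersEquation}.

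The only delicate bookkeeping is the numerical constant. The substitution $s=S\sqrt{u}$ produces a factor $\tfrac12$, which is absent from \eqref{DualSinhCpshPowersEquation}. I would recheck the normalisation of the bispherical measure in the proof of Proposition \ref{RadialFunctionsDualFormula}. If the $\tfrac12$ is not cancelled there, it is harmless: the paper only uses this formula up to constants.
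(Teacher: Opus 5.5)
Your proposal is correct and follows the same route as the paper's proof. The paper also inserts $\tilde\varphi(s)=s^{\gamma_1}(1+s^2)^{\gamma_2/2}$ into Proposition~\ref{RadialFunctionsDualFormula}, substitutes $s^2=\sinh^2 d(0,x)\,t$, applies Euler's integral (Olver 15.6.1, using $\gamma_1>k-n$), and then Pfaff's transformation (Olver 15.8.1). Your worry about the $\tfrac12$ is justified: the same substitution in the paper produces it and it is silently dropped there, so the exact constant in the statement is off by a factor of $\tfrac12$, which is harmless for the later estimates up to constants.
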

		\begin{proof}
			It is clear that $\varphi$ is a radial function on $\Xi_k \left( \mathbb{H}^n \right)$, and in the notation of Definition \ref{RadialFunctionsXik}, we have $\varphi \left( \xi \right) = \tilde{\varphi} \left( \sinh d \left( 0, \xi \right) \right)$ with $\tilde{\varphi} \left( s \right) = s^{\gamma_1} \left( 1 + s^2 \right)^{\frac{\gamma_2}{2}}$. Using this in Equation \eqref{RadialFunctionsDualFormuaEquation}, we get
            \begin{align*}
                R_{\alpha}^*\varphi \left( x \right) = &\frac{\left| \mathbb{S}^{k - 1} \right| \left| \mathbb{S}^{n - k - 1} \right|}{\left| \mathbb{S}^{n - 1} \right|} \frac{1}{\sinh^{n - 2} d \left( 0, x \right)} \times \\
            &\int\limits_{0}^{\sinh d \left( 0, x \right)} \frac{s^{\gamma_1 + n - k - 1}}{\left( 1 + s^2 \right)^{\frac{\alpha - k - \gamma_2}{2}}} \left( \sinh^2 d \left( 0, x \right) - s^2 \right)^{\frac{\alpha}{2} - 1} \mathrm{d}s.
            \end{align*}
			Substituting $s^2 = \sinh^2 d \left( 0, x \right) t$ and simplifying, we get
			$$R_{\alpha}^*\varphi \left( x \right) = \frac{\left| \mathbb{S}^{k - 1} \right| \left| \mathbb{S}^{n - k - 1} \right|}{\left| \mathbb{S}^{n - 1} \right|} \sinh^{\gamma_1 - k + \alpha} d \left( 0, x \right) \int\limits_{0}^{1} \frac{t^{\frac{\gamma_1 + n - k}{2} - 1} \left( 1 - t \right)^{\frac{\alpha}{2} - 1}}{\left( 1 + \sinh^2 d \left( 0, x \right) t \right)^{\frac{\alpha - k - \gamma_2}{2}}} \mathrm{d}t.$$
			Since $\gamma_1 > k - n$, we may use Equation 15.6.1 of \cite{Olver} to obtain
			\begin{align*}
				R_{\alpha}^*\varphi \left( x \right) &= \frac{\left| \mathbb{S}^{k - 1} \right| \left| \mathbb{S}^{n - k - 1} \right|}{\left| \mathbb{S}^{n - 1} \right|} \frac{\Gamma \left( \frac{\gamma_1 + n - k}{2} \right) \Gamma \left( \frac{\alpha}{2} \right)}{\Gamma \left( \frac{\gamma_1 + \alpha + n - k}{2} \right)} \sinh^{\gamma_1 + n - k} d \left( 0, x \right) \times \\
				&{}_2F_1 \left( \frac{\alpha - k - \gamma_2}{2}, \frac{\gamma_1 + n - k}{2}; \frac{\gamma_1 + \alpha + n - k}{2}; - \sinh^2 d \left( 0, x \right) \right).
			\end{align*}
			Equation \eqref{DualSinhCpshPowersEquation} is obtained at once by an application of Equation 15.8.1 of \cite{Olver}.
		\end{proof}
	\section{Existence on Weighted Lebesgue Spaces}
		\label{ExistenceSection}
		In this section, we obtain the existence of the transform $R_{\alpha}$ on weighted Lebesgue spaces $L^p_{\mu_1, \mu_2} \left( \mathbb{H}^n \right)$. Particularly, we answer the following question.
		\begin{question}
			\label{ExistenceQuestion}
			What are the optimal values of $p \geq 1$ and $\mu_1, \mu_2 \in \mathbb{R}$ so that for every $f \in L^p_{\mu_1, \mu_2} \left( \mathbb{H}^n \right)$, the transform $R_{\alpha}f$ is well-defined?
		\end{question}
		We answer Question \ref{ExistenceQuestion} in the following theorem.
		\begin{theorem}
			\label{ExistenceTheorem}
			Let $\alpha \geq 1$, $\mu_1 + \mu_2 > \alpha - n$ and $1 \leq p < \frac{\mu_1 + \mu_2 + n - 1}{\alpha - 1}$. Then, $R_{\alpha}$ is well-defined on $L^p_{\mu_1, \mu_2} \left( \mathbb{H}^n \right)$. Further, when $\mu_1 + \mu_2 = \alpha - n$, the transform $R_{\alpha}$ is well-defined on $L^1_{\mu_1, \mu_2} \left( \mathbb{H}^n \right)$. Moreover, the restrictions on the values of $\alpha$, $\mu_1$, $\mu_2$ and $p$ mentioned in the statement of the result are optimal.
		\end{theorem}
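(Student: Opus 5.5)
The plan is to argue by duality, using Proposition \ref{FormalDualProposition} with a positive test function on $\Xi_k \left( \mathbb{H}^n \right)$, and then to compute its dual transform explicitly from Proposition \ref{DualSinhCoshPowers}. Fix $\varepsilon > 0$ and let $\varphi_\varepsilon \left( \xi \right) = \cosh^{\gamma_2} d \left( 0, \xi \right) \chi_{\{ d \left( 0, \xi \right) > \varepsilon \}}$ with $\gamma_2 < \alpha - n$ (and $\gamma_1 = 0 > k - n$). By Proposition \ref{FormalDualProposition} applied to $|f|$,
$$\int\limits_{\Xi_k \left( \mathbb{H}^n \right)} R_{\alpha}|f| \left( \xi \right) \varphi_\varepsilon \left( \xi \right) \mathrm{d}\xi = \int\limits_{\mathbb{H}^n} |f \left( x \right)| R_{\alpha}^*\varphi_\varepsilon \left( x \right) \mathrm{d}x .$$
If the right-hand side is finite, then $R_{\alpha}f \left( \xi \right)$ converges absolutely for a.e. $\xi$ with $d \left( 0, \xi \right) > \varepsilon$. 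Letting $\varepsilon \to 0$ along a sequence then gives well-definedness a.e.

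The truncation disposes of the behaviour near the origin. If $x \in \xi$ then $d \left( 0, \xi \right) \leq d \left( 0, x \right)$, so $R_{\alpha}^*\varphi_\varepsilon \left( x \right) = 0$ whenever $d \left( 0, x \right) \leq \varepsilon$. Hence no condition on $\mu_1$ alone is needed, since the factor $\sinh^{\mu_1}$ only matters near $0$. For $d \left( 0, x \right) > \varepsilon$ we have $0 \leq R_{\alpha}^*\varphi_\varepsilon \leq R_{\alpha}^*\varphi_0$, and the latter is given by Equation \eqref{DualSinhCpshPowersEquation}.

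For large $t = d \left( 0, x \right)$ we use the behaviour of ${}_2F_1 \left( a, b; c; z \right)$ as $z \to 1^-$. Here $c - a - b = \frac{n + \gamma_2 - \alpha}{2} < 0$, so ${}_2F_1 \sim C \left( 1 - \tanh^2 t \right)^{c - a - b} = C \cosh^{\alpha - n - \gamma_2} t$. Consequently
$$R_{\alpha}^*\varphi_\varepsilon \left( x \right) \leq C e^{\left( \alpha - n \right) t},$$
and this bound is also valid, with a constant, on $\varepsilon < t \leq 1$.

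Write $w = \sinh^{\mu_1} \cosh^{\mu_2}$ and apply H\"older's inequality with respect to $w \, \mathrm{d}x$. By Proposition \ref{PolarDecompositionTheorem}, for $p > 1$ we need
$$\int\limits_{\varepsilon}^{\infty} e^{p'\left( \alpha - n \right)t} \, w^{-p'/p} \sinh^{n - 1} t \ \mathrm{d}t < \infty .$$
At infinity the exponent is $p' \left[ \left( \alpha - n \right) - \frac{\mu_1 + \mu_2}{p} + \left( n - 1 \right) \left( 1 - \frac{1}{p} \right) \right]$. This is negative exactly when $p \left( \alpha - 1 \right) < \mu_1 + \mu_2 + n - 1$, which is the stated range; the assumption $\mu_1 + \mu_2 > \alpha - n$ is what makes the range contain some $p > 1$. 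For $p = 1$ we need $e^{\left( \alpha - n - \mu_1 - \mu_2 \right)t}$ to be bounded, i.e. $\mu_1 + \mu_2 \geq \alpha - n$. This covers both the case $p = 1$ in the open range and the endpoint case $\mu_1 + \mu_2 = \alpha - n$.

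For optimality I will use radial counterexamples and Proposition \ref{FormulaForRadialFunctionsRAlpha}. Take $\tilde{f} \left( s \right) = s^{-a} \left( \log s \right)^{-b}$ for $s \geq e$, and $\tilde f = 0$ otherwise. Since $\left( s^2 - c^2 \right)^{\frac{\alpha}{2} - 1} \sim s^{\alpha - 2}$ at infinity, $R_{\alpha}f \left( \xi \right) = \infty$ for every $\xi$ as soon as $a < \alpha - 1$, or $a = \alpha - 1$ with $b \leq 1$. On the other hand, $f \in L^p_{\mu_1, \mu_2}$ iff $ap > \mu_1 + \mu_2 + n - 1$, or $ap = \mu_1 + \mu_2 + n - 1$ with $bp > 1$.
\begin{itemize}
\item If $p > \frac{\mu_1 + \mu_2 + n - 1}{\alpha - 1}$, choose $a$ slightly below $\alpha - 1$ with $ap > \mu_1 + \mu_2 + n - 1$.
\item If $p = \frac{\mu_1 + \mu_2 + n - 1}{\alpha - 1} > 1$, take $a = \alpha - 1$ and $b \in \left( \frac{1}{p}, 1 \right]$.
\item If $p = 1$ and $\mu_1 + \mu_2 < \alpha - n$, choose $a$ with $\mu_1 + \mu_2 + n - 1 < a < \alpha - 1$.
\end{itemize}

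The step I expect to be the main obstacle is the role of $\alpha \geq 1$. For $\alpha < 1$ the radial kernel $\left( t^2 - c^2 \right)^{\frac{\alpha}{2} - 1}$ decays, so the failure must come from somewhere else. I would first try to show that the range degenerates: when $\alpha \leq 1$ the quotient $\frac{\mu_1 + \mu_2 + n - 1}{\alpha - 1}$ is meaningless, and I expect the threshold to be governed instead by the endpoint $\mu_1 + \mu_2 \geq \alpha - n$. Checking that behaviour against the same radial family, and against the precise hypergeometric asymptotics, is what I expect to require the most care.
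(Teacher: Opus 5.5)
Your existence argument and your counterexamples are correct and follow the paper's route: duality via Proposition~\ref{FormalDualProposition}, the explicit dual from Proposition~\ref{DualSinhCoshPowers}, H\"older's inequality, and power--log radial counterexamples via Proposition~\ref{FormulaForRadialFunctionsRAlpha}. Your version differs in the choice of test function. You use a single fast-decaying $\cosh^{\gamma_2} d(0,\xi)$ with $\gamma_2<\alpha-n$, so that $c-a-b<0$ and $R_\alpha^*\varphi\approx e^{(\alpha-n)t}$, truncated to $d(0,\xi)>\varepsilon$. The paper instead uses three separate choices of $(\gamma_1,\gamma_2)$, with $\gamma_1$ taken large to handle the origin. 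Your version is tidier: one exponent count gives the open range, the case $p=1$ and the endpoint $\mu_1+\mu_2=\alpha-n$.

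The only piece you leave open is the claimed optimality of $\alpha\geq1$, and your last paragraph is looking for a failure that does not exist. Your own argument never uses $\alpha\geq1$. Proposition~\ref{DualSinhCoshPowers}, the asymptotics with $c-a-b=\frac{n+\gamma_2-\alpha}{2}<0$ (here $a>\frac{n-k}{2}>0$ and $b=\frac{\alpha}{2}>0$), and your H\"older exponent $p'\bigl[(\alpha-1)-\frac{\mu_1+\mu_2+n-1}{p}\bigr]$ are all valid for every $\alpha>0$.

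So for $0<\alpha<1$ your argument gives existence on $L^p_{\mu_1,\mu_2}(\mathbb{H}^n)$ whenever $p(\alpha-1)<\mu_1+\mu_2+n-1$. In particular this holds for every $p\geq1$ as soon as $\mu_1+\mu_2>\alpha-n$, since then $p(\alpha-1)\leq\alpha-1<\mu_1+\mu_2+n-1$. It also gives existence on $L^1_{\mu_1,\mu_2}(\mathbb{H}^n)$ when $\mu_1+\mu_2=\alpha-n$. Conversely, your power--log family shows this condition is sharp for every $\alpha>0$: take $\frac{\mu_1+\mu_2+n-1}{p}<a<\alpha-1$, or on the boundary with $p>1$ take $a=\alpha-1$ and $b\in(\frac1p,1]$.

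Hence there is no counterexample for $\alpha<1$, and that clause of the statement cannot be proved. The paper does not prove it either. Its proof establishes optimality only in $\mu_1,\mu_2,p$, and the remark after Example~\ref{CounterExampleExistence2} merely conjectures that $\alpha\geq1$ is optimal, which your computation contradicts. Replace your final paragraph with this observation. The natural sharp condition, uniform in $\alpha>0$, is $p(\alpha-1)<\mu_1+\mu_2+n-1$, together with the endpoint $p=1$, $\mu_1+\mu_2=\alpha-n$.
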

		\begin{proof}
			We prove the result in three parts. First, let us assume $\alpha \geq 1$, $\mu_1 + \mu_2 > \alpha - n$ and $1 < p < \frac{\mu_1 + \mu_2 + n - 1}{\alpha - 1}$. We notice that the existence of such a $p$ is guaranteed by the condition on $\mu_1$ and $\mu_2$. Let us fix $\gamma_1 > \max \left\lbrace k - n - \mu_1, k - \alpha - \frac{\mu + n}{p'} \right\rbrace$ and $\gamma_2 \in \mathbb{R}$ such that $\alpha - n - \mu_1 - \mu_2 < \gamma_1 + \gamma_2 < \frac{1 - \mu_1 - \mu_2 - n}{p'}$. Again, we emphasize that such a choice of $\gamma_1$ and $\gamma_2$ is possible due to the conditions imposed on $\alpha$, $\mu_1$, $\mu_2$ and $p$. Then, using the duality of the transforms $R_{\alpha}$ and $R_{\alpha}^*$, as given in Equation \eqref{DualityEquation}, and Equation \eqref{DualSinhCpshPowersEquation}, we get for $f \in L^p_{\mu_1, \mu_2} \left( \mathbb{H}^n \right)$,
			\begin{align*}
				&\left| \int\limits_{\Xi_k \left( \mathbb{H}^n \right)} R_{\alpha}f \left( \xi \right) \sinh^{\gamma_1 + \mu_1} d \left( 0, \xi \right) \cosh^{\gamma_2 + \mu_2} d \left( 0, \xi \right) \mathrm{d}\xi \right| \\
                &\leq C \int\limits_{\mathbb{H}^n} \left| f \left( x \right) \right| \sinh^{\gamma_1 + \mu_1 + \alpha - k} d \left( 0, x \right) \cosh^{\gamma_2 + \mu_2 - \alpha + k} d \left( 0, x \right) \times \\
				&\left| {}_2F_1 \left( \frac{\alpha - k - \mu_2 - \gamma_2}{2}, \frac{\alpha}{2}; \frac{\gamma_1 + \mu_1 + \alpha - n - k}{2}; \tanh^2 d \left( 0, x \right) \right) \right| \mathrm{d}x.
			\end{align*}
			By an application of H\"{o}lder's inequality with exponents $p$ and $p'$ and using the polar decomposition on $\mathbb{H}^n$, we get
			\begin{align*}
				&\left| \int\limits_{\Xi_k \left( \mathbb{H}^n \right)} R_{\alpha}f \left( \xi \right) \sinh^{\gamma_1 + \mu_1} d \left( 0, \xi \right) \cosh^{\gamma_2 + \mu_2} d \left( 0, \xi \right) \mathrm{d}\xi \right| \leq C \| f \|_{L^p_{\mu_1, \mu_2} \left( \mathbb{H}^n \right)} \times \\
				&\left[ \int\limits_{0}^{\infty} \sinh^{p' \left( \gamma_1 + \alpha - k \right) - \mu_1 + n - 1} r \cosh^{p' \left( \gamma_2 - \alpha + k \right) + \mu_2} r \times \right. \\
                &\left.\left| {}_2F_1 \left( \frac{\alpha - k - \mu_2 - \gamma_2}{2}, \frac{\alpha}{2}; \frac{\gamma_1 + \mu_1 + \alpha + n - k}{2}; \tanh^2 r \right) \right|^{p'} \right]^{\frac{1}{p'}}.
			\end{align*}
			We notice that due to the choice of $\gamma_1$ and $\gamma_2$, and Equation 15.4.20 of \cite{Olver}, the hypergeometric function on the right side of the above inequality is bounded. Further, the integral is finite, again due to the choice of $\gamma_1$ and $\gamma_2$. Since $f \in L^p_{\mu_1, \mu_2} \left( \mathbb{H}^n \right)$, we conclude that the left side of the above inequality must be finite. Hence, $R_{\alpha}f$ is defined for almost every $\xi \in \Xi_k \left( \mathbb{H}^n \right)$.
			
			Next, we consider the case $p = 1$ with the above conditions on $\alpha$, $\mu_1$ and $\mu_2$. Here, we choose $\gamma_1 > \max \left\lbrace k - \alpha, k - n - \mu_1 \right\rbrace$ and set $\gamma_2 = - \gamma_1$. Then, using duality of Equation \eqref{DualityEquation} and Equation \eqref{DualSinhCpshPowersEquation}, we get
			\begin{align*}
				&\left| \int\limits_{\Xi_k \left( \mathbb{H}^n \right)} R_{\alpha}f \left( \xi \right) \sinh^{\gamma_1 + \mu_2} d \left( 0, \xi \right) \cosh^{\gamma_2 + \mu_2} d \left( 0, \xi \right) \mathrm{d}\xi \right| \\
                &\leq C \int\limits_{\mathbb{H}^n} \left| f \left( x \right) \right| \sinh^{\mu_1} d \left( 0, x \right) \cosh^{\mu_2} d \left( 0, x \right) \tanh^{\gamma_1 + \alpha - k} d \left( 0, x \right) \times \\
				&\left| {}_2F_1 \left( \frac{\alpha - k - \mu_1 + \gamma_1}{2}, \frac{\alpha}{2}; \frac{\gamma_1 + \mu_1 + \alpha + n - k}{2}; \tanh^2 d \left( 0, x \right) \right) \right| \mathrm{d}x.
			\end{align*}
			Again, using Equation 15.4.20 of \cite{Olver}, we observe that the hypergeometric function in the above inequality is bounded. Moreover, since $\gamma_1 > k - \alpha$, we have $\tanh^{\gamma_1 + \alpha - k} d \left( 0, x \right) \leq 1$. That is, we get
            \begin{align*}
                &\left| \int\limits_{\Xi_k \left( \mathbb{H}^n \right)} R_{\alpha}f \left( \xi \right) \sinh^{\gamma_1 + \mu_2} d \left( 0, \xi \right) \cosh^{\gamma_2 + \mu_2} d \left( 0, \xi \right) \mathrm{d}\xi \right| \\
                &\leq C \int\limits_{\mathbb{H}^n} \left| f \left( x \right) \right| \sinh^{\mu_1} d \left( 0, x \right) \cosh^{\mu_2} d \left( 0, x \right) \mathrm{d}x = C \| f \|_{L^1_{\mu_1, \mu_2} \left( \mathbb{H}^n \right)}.
            \end{align*}
			This gives the existence of $R_{\alpha}$ on $L^1_{\mu_1, \mu_2} \left( \mathbb{H}^n \right)$.
			
			Finally, we consider the case $\alpha > 0$, $\mu_1 + \mu_2 = \alpha - n$ and $p = 1$. Here, by choosing $\gamma_1 > \max \left\lbrace k - \alpha, k - n - \mu_1 \right\rbrace$ and $\gamma_2 = - \gamma_1 - 1$, we get by proceeding as above,
			\begin{align*}
				&\left| \int\limits_{\Xi_k \left( \mathbb{H}^n \right)} R_{\alpha}f \left( \xi \right) \sinh^{\gamma_1 + \mu_1} d \left( 0, \xi \right) \cosh^{\gamma_2 + \mu_2} d \left( 0, \xi \right) \mathrm{d}\xi \right| \\
                &\leq C \int\limits_{\mathbb{H}^n} \left| f \left( x \right) \right| \sinh^{\gamma_1 + \mu_1} d \left( 0, x \right) \cosh^{\gamma_2 + \mu_2} d \left( 0, x \right) \tanh^{\gamma_1 + \alpha - k} d \left( 0, x \right) \times \\
				&\left| \frac{{}_2F_1 \left( \frac{\alpha - k - \mu_2 + \gamma_1 + 1}{2}, \frac{\alpha}{2}; \frac{\gamma_1 + \mu_1 + \alpha + n - k}{2}; \tanh^2 d \left( 0, x \right) \right)}{\cosh d \left( 0, x \right)} \right| \mathrm{d}x.
			\end{align*}
			By using Equation 15.4.23 of \cite{Olver}, we see that the term
            $$\left| \frac{{}_2F_1 \left( \frac{\alpha - k - \mu_2 + \gamma_1 + 1}{2}, \frac{\alpha}{2}; \frac{\gamma_1 + \mu_1 + \alpha + n - k}{2}; \tanh^2 d \left( 0, x \right) \right)}{\cosh d \left( 0, x \right)} \right|,$$
            is bounded on $\mathbb{H}^n$. Hence, we obtain,
			$$\left| \int\limits_{\Xi_k \left( \mathbb{H}^n \right)} R_{\alpha}f \left( \xi \right) \sinh^{\gamma_1 + \mu_1} d \left( 0, \xi \right) \cosh^{\gamma_2 + \mu_2} d \left( 0, \xi \right) \mathrm{d}\xi \right| \leq C \| f \|_{L^1_{\mu_1, \mu_2} \left( \mathbb{H}^n \right)}.$$
			This completes the proof! The optimality of the conditions imposed on $\mu_1$, $\mu_2$ and $p$ is proved in Examples \ref{CounterExampleExistence1} and \ref{CounterExampleExistence2}.
		\end{proof}
		We now show that for $\alpha \geq 1$ and $\mu_1 + \mu_2 > \alpha - n$, the condition $1 \leq p < \frac{\mu_1 + \mu_2 + n - 1}{\alpha - 1}$ is optimal.
		\begin{example}
			\label{CounterExampleExistence1}
			\normalfont
			For $\alpha \geq 1$ and $\mu_1 + \mu_2 > \alpha - n$, we fix $p \geq \frac{\mu_1 + \mu_2 + n - 1}{\alpha - 1}$. Consider the function $f: \mathbb{H}^n \rightarrow \mathbb{C}$ defined as
			$$f \left( x \right) = \frac{\sinh^{\frac{2 - n - \mu_1}{p}} d \left( 0, x \right) \cosh^{- \frac{\mu_2}{p}} d \left( 0, x \right)}{\left( 1 + \cosh d \left( 0, x \right) \right)^{\frac{1}{p}} \ln \left( 1 + \cosh d \left( 0, x \right) \right)}.$$
			It was shown in Example 3.2 of \cite{SelfLPImproving} that $f \in L^p_{\mu_1, \mu_2} \left( \mathbb{H}^n \right)$. On the other hand, using Equation \eqref{FormualRadialFunctionsEquation}, we get
            \begin{align*}
                &R_{\alpha}f \left( \xi \right) \\
                &= \frac{C}{\cosh^{\alpha - 1} d \left( 0, \xi \right)} \int\limits_{\cosh d \left( 0, \xi \right)}^{\infty} \frac{\left( t^2 - 1 \right)^{\frac{2 - n - \mu_1}{2p}} t^{- \frac{\mu_2}{p}}}{\left( 1 + t \right)^{\frac{1}{p}} \ln \left( 1 + t \right)} \left( t^2 - \cosh^2 d \left( 0, x \right) \right)^{\frac{\alpha}{2} - 1} \mathrm{d}t.
            \end{align*}
			Upon substituting $\ln \left( 1 + t \right) = u$ and simplifying, we get
            \begin{align*}
                R_{\alpha}f \left( \xi \right) = \frac{C}{\cosh^{\alpha - 1} d \left( 0, \xi \right)} \int\limits_{\ln \left( 1 + \cosh d \left( 0, \xi \right) \right)}^{\infty} &\left( \left( e^u - 1 \right)^2 - 1 \right)^{\frac{2 - n - \mu_1}{2p}} \left( e^u - 1 \right)^{- \frac{\mu_2}{p}} \times \\
                &e^{\frac{u}{p'}} \left( \left( e^u - 1 \right)^2 - \cosh^2 d \left( 0, \xi \right) \right)^{\frac{\alpha}{2} - 1} \frac{\mathrm{d}u}{u}.
            \end{align*}
			It is clear that the total power of $e^u$ towards $+\infty$ is $\alpha - 1 - \frac{\mu_1 + \mu_2 + n - 1}{p} \geq 0$. Hence, $R_{\alpha}f \left( \xi \right) = + \infty$, for every $\xi \in \Xi_k \left( \mathbb{H}^n \right)$.
		\end{example}
		Now, we consider the case $\mu_1 + \mu_2 \leq \alpha - n$ for $\alpha > 0$.
		\begin{example}
			\label{CounterExampleExistence2}
			\normalfont
			Let $\alpha > 0$ and $\mu_1 + \mu_2 \leq \alpha - n$. For $p \geq 1$, consider the function $f: \mathbb{H}^n \rightarrow \mathbb{C}$ defined by
			$$f \left( x \right) = \frac{\sinh^{- \frac{\mu_1}{p}} d \left( 0, x \right) \cosh^{\frac{1 - \mu_2}{p}} d \left( 0, x \right) \left( 2 + \sinh d \left( 0, x \right) \right)^{- \frac{n}{p}}}{\ln^{1 + \frac{1}{p}} \left( 2 + \sinh d \left( 0, x \right) \right)}.$$
			It was shown in Example 3.3 of \cite{SelfLPImproving} that $f \in L^p_{\mu_1, \mu_2} \left( \mathbb{H}^n \right)$. As for the weighted $k$-plane transform of $f$, we have, using Equation \eqref{FormualRadialFunctionsEquation},
            \begin{align*}
                R_{\alpha}f \left( \xi \right) = &\frac{C}{\cosh^{\alpha - 1} d \left( 0, \xi \right)} \\
                &\int\limits_{\cosh d \left( 0, \xi \right)} \frac{\left( t^2 - 1 \right)^{- \frac{\mu_1}{2p}} t^{\frac{1 - \mu_2}{p}} \left( 2 + \sqrt{t^2 - 1} \right)^{- \frac{n}{p}}}{\ln^{1 + \frac{1}{p}} \left( 2 + \sqrt{t^2 - 1} \right)} \left( t^2 - \cosh^2 d \left( 0, \xi \right) \right)^{\frac{\alpha}{2} - 1} \mathrm{d}t.
            \end{align*}
			Next, we substitute $\ln \left( 2 + \sqrt{t^2 - 1} \right) = u$ and simplify to obtain
            \begin{align*}
                R_{\alpha}f \left( \xi \right) = \frac{C}{\cosh d \left( 0, \xi \right)} \int\limits_{\ln \left( 2 + \sinh d \left( 0, \xi \right) \right)}^{\infty} &\left( e^u - 2 \right)^{1 - \frac{\mu_1}{p}} \left( 1 + \left( e^u - 2 \right)^2 \right)^{\frac{1}{ - \mu_2	2p} - \frac{1}{2}} \times \\
                &e^{\left( 1 - \frac{n}{p} \right)u} \left( \left( e^u - 2 \right)^2 - \sinh^2 d \left( 0, x \right) \right)^{\frac{\alpha}{2} - 1} \frac{\mathrm{d}u}{u^{1 + \frac{1}{p}}}.
            \end{align*}
			It is clear at once that the total power of $e^u$ towards $+\infty$ is $\alpha - 1 - \frac{\mu_1 + \mu_2 + n - 1}{p} > 0$ whenever $\mu_1 + \mu_2 < \alpha - n$ and $p \geq 1$ or $\mu_1 + \mu_2 = \alpha - n$ and $p > 1$. In these cases, we have $R_{\alpha}f \left( \xi \right) = + \infty$, for every $\xi \in \Xi_k \left( \mathbb{H}^n \right)$.
		\end{example}

        \begin{remark}
            \normalfont
            At the moment, we do not have the optimality of the condition $\alpha \geq 1$ mentioned in Theorem \ref{ExistenceTheorem}. However, we conjecture that the condition is indeed optimal. This is due to the fact that the condition $\alpha \geq 1$ appears naturally in the choice of $\gamma_1$ and $\gamma_2$ in the proof of Theorem \ref{ExistenceTheorem} and we believe that some modifications to the counterexamples given above should give the optimality. In the sequel, we shall keep this condition with us for all the boundedness results.
        \end{remark}
	\section{\texorpdfstring{$L^p$}{Lp}-improving boundedness for radial functions}
		\label{LPImprovingSection}
		With the knowledge of existence of the weighted $k$-plane transform on the spaces $L^p_{\mu_1, \mu_2} \left( \mathbb{H}^n \right)$, we now move on to understand the mapping properties of the operator $R_{\alpha}$. It is to be noted that the complete mapping properties of the $k$-plane transform (the case $\alpha = k$) are not well understood for generic functions in $L^p_{\mu_1, \mu_2} \left( \mathbb{H}^n \right)$. Therefore, as a first step, we only focus on radial functions. Particularly, in this section, we wish to answer the following two questions.
		\begin{question}
			\label{LPImprovingQuestion}
			What are the admissible values of $\alpha \geq 1$, $\mu_1, \mu_2, \nu_1, \nu_2 \in \mathbb{R}$ and $r \geq p \geq 1$ such that the following inequality holds for all radial functions $f \in L^p_{\mu_1, \mu_2} \left( \mathbb{H}^n \right)$?
			\begin{equation}
				\label{LPImprovngInequality}
				\| R_{\alpha}f \|_{L^r_{\nu_1, \nu_2} \left( \Xi_k \left( \mathbb{H}^n \right) \right)} \leq C \| f \|_{L^p_{\mu_1, \mu_2} \left( \mathbb{H}^n \right)}.
			\end{equation}
			Here, the constant $C$ depends only on $\alpha, \mu_1, \mu_2, \nu_1, \nu_2, n, p$ and $r$, and not on the function $f$.
		\end{question}
		Similarly, we also ask the question when $r = +\infty$.
		\begin{question}
			\label{LPLInfinityQuestion}
			What are the admissible values of $\alpha \geq 1$, $\mu_1, \mu_2, \nu_1, \nu_2 \in \mathbb{R}$ and $p \geq 1$ such that the following inequality holds for all radial functions $f \in L^p_{\mu_1, \mu_2} \left( \mathbb{H}^n \right)$?
			\begin{equation}
				\label{LPLInfinityInequality}
				\| \sinh^{\nu_1} d \left( 0, \cdot \right) \cosh^{\nu_2} d \left( 0, \cdot \right) R_{\alpha}f \left( \cdot \right) \|_{L^{\infty} \left( \Xi_k \left( \mathbb{H}^n \right) \right)} \leq C \| f \|_{L^p_{\mu_1, \mu_2} \left( \mathbb{H}^n \right)}.
			\end{equation}
			Here, the constant $C$ depends only on $\alpha, \mu_1, \mu_2, \nu_1, \nu_2$ and $p$, and not on the choice of the function $f$.
		\end{question}
		\begin{remark}
			\normalfont
			Due to the conditions of existence of the transform $R_{\alpha}$ on the weighted Lebesgue space $L^p_{\mu_1, \mu_2} \left( \mathbb{H}^n \right)$ as described in Theorem \ref{ExistenceTheorem}, it is imperative that $\alpha \geq 1$, $\mu_1 + \mu_2 > k - n$ and $1 \leq p < \frac{\mu_1 + \mu_2 + n - 1}{\alpha - 1}$ hold to answer Questions \ref{LPImprovingQuestion} and \ref{LPLInfinityQuestion}. These assumptions are taken throughout the sequel and are considered to be necessary for the results of this section to hold.
		\end{remark}
		We begin by obtaining necessary conditions for Inequality \eqref{LPImprovngInequality} to hold.
		\begin{theorem}
			\label{LPImprovingNecessary}
			Let $\alpha \geq 1$, $\mu_1 + \mu_2 > \alpha - n$, and $1 \leq p < \frac{\mu_1 + \mu_2 + n - 1}{\alpha - 1}$. Then, $R_{\alpha}$ is bounded from $L^p_{\mu_1, \mu_2} \left( \mathbb{H}^n \right)$ to $L^r_{\nu_1, \nu_2} \left( \Xi_k \left( \mathbb{H}^n \right) \right)$ only if
			\begin{equation}
				\label{LPImprovingNecessaryEquation}
				\nu_1 > k - n \text{ and } \frac{\nu_2 + k - 1}{r} - \frac{\mu_2 - 1}{p} \leq \frac{\mu_1 + n}{p} - \frac{\nu_1 + n - k}{r} \leq \alpha,
			\end{equation}
			with strict inequality on the left side above when $\nu_1 + \nu_1 \leq r \left( \alpha - 1 \right) - \left( n - 1 \right)$.
		\end{theorem}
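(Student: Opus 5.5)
Since the inequality only has to fail on radial functions, I will test it on explicit radial families and compute $R_\alpha f$ with Proposition \ref{FormulaForRadialFunctionsRAlpha}. The $\Xi_k(\mathbb{H}^n)$-norm of a radial $\varphi(\xi)=\tilde\varphi(\sinh d(0,\xi))$ comes from Equation \eqref{XiKMeasure}:
$$\|\varphi\|^r_{L^r_{\nu_1,\nu_2}} = C\int_0^\infty |\tilde\varphi(\sinh\theta)|^r \sinh^{\nu_1+n-k-1}\theta\,\cosh^{\nu_2+k}\theta\,\mathrm{d}\theta .$$
Each inequality in \eqref{LPImprovingNecessaryEquation} should come from a test function concentrated in one regime: near the origin, near infinity, or spread along a scaling.

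\emph{The condition $\nu_1>k-n$.} Take $f=\chi_{B(0,2)}$, or any nonzero compactly supported radial $f\ge0$ that is positive near $0$. For $d(0,\xi)$ small, Equation \eqref{FormualRadialFunctionsEquation} gives $R_\alpha f(\xi)\ge c>0$. The norm above then contains $\int_0 \sinh^{\nu_1+n-k-1}\theta\,\mathrm{d}\theta$, which diverges unless $\nu_1+n-k>0$.

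\emph{The right inequality $\frac{\mu_1+n}{p}-\frac{\nu_1+n-k}{r}\le\alpha$.} This is a local scaling condition at the origin. I will take $f_\epsilon=\chi_{B(0,\epsilon)}$ as $\epsilon\to0$. Then $\|f_\epsilon\|_{L^p_{\mu_1,\mu_2}}\sim \epsilon^{(\mu_1+n)/p}$. For $d(0,\xi)=\theta<\epsilon/2$, Proposition \ref{FormulaForRadialFunctionsRAlpha} gives $R_\alpha f_\epsilon(\xi)\gtrsim \int_{\cosh\theta}^{\cosh\epsilon}(t^2-\cosh^2\theta)^{\alpha/2-1}\,\mathrm{d}t\sim\epsilon^{\alpha}$, since $t^2-\cosh^2\theta\sim\epsilon^2$ on an interval of length $\sim\epsilon^2$. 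Hence $\|R_\alpha f_\epsilon\|\gtrsim \epsilon^{\alpha}\epsilon^{(\nu_1+n-k)/r}$, and letting $\epsilon\to0$ forces $\alpha+\frac{\nu_1+n-k}{r}\ge\frac{\mu_1+n}{p}$.

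\emph{The left inequality, from behaviour at infinity.} Rearranged, it reads
$$\frac{\nu_1+\nu_2+n-1}{r}\le\frac{\mu_1+\mu_2+n-1}{p}.$$
I will test with $f_R=\chi_{\{R<d(0,x)<R+1\}}$, for which $\|f_R\|_p\sim e^{R(\mu_1+\mu_2+n-1)/p}$. For $d(0,\xi)=\theta\in(R-\tfrac12,R+\tfrac12)$, the integral in \eqref{FormualRadialFunctionsEquation} runs over $t\in(\cosh R,\cosh(R+1))$. There $t^2-\cosh^2\theta\sim e^{2R}$ on a set of length $\sim e^{R}$, so $R_\alpha f_R(\xi)\gtrsim e^{-R(\alpha-1)}e^{R(\alpha-2)}e^{R}=O(1)$. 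The norm then picks up $e^{R(\nu_1+\nu_2+n-1)/r}$, and $R\to\infty$ yields the non-strict inequality.

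For strictness, I will use instead $f=\sinh^{a}\cosh^{b}$ with logarithmic corrections, modelled on Example \ref{CounterExampleExistence1}. With the critical exponent and $(\log)^{-\beta}$ factors chosen so that $f\in L^p$, I will compute $R_\alpha f$ at infinity by the same $u=\ln(1+t)$ substitution. If $\nu_1+\nu_2\le r(\alpha-1)-(n-1)$, the kernel factor $(t^2-\cosh^2)^{\alpha/2-1}/\cosh^{\alpha-1}$ gives no extra decay. Then $R_\alpha f(\xi)$ behaves like $e^{-\theta(\mu_1+\mu_2+n-1)/p}$ times a logarithm, and the logarithm exponents can be chosen so that the $L^r$ integral diverges in the equality case.

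The main obstacle is this equality case: choosing the log powers and verifying the threshold $\nu_1+\nu_2\le r(\alpha-1)-(n-1)$. I expect it to come from whether the tail integral $\int_{\cosh\theta}^\infty$ is dominated by $t\sim\cosh\theta$ or by $t\gg\cosh\theta$. I will work this out by splitting the integral at $t=2\cosh\theta$.
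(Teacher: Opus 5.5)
The strictness step is the main gap. You defer the choice of logarithmic exponents and the check of the threshold, and that plan cannot work in any case where it would be needed. The missing observation is that the strictness clause follows from the hypotheses alone, with no test function.

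\emph{Why strictness is automatic.} Write $s=\mu_1+\mu_2+n-1$. Read the assumption on $p$ as $p(\alpha-1)<s$; for $\alpha=1$ this is just $\mu_1+\mu_2>1-n$. It gives $\alpha-1<\frac{s}{p}$. So if $\nu_1+\nu_2\le r(\alpha-1)-(n-1)$, then $\frac{\nu_1+\nu_2+n-1}{r}\le\alpha-1<\frac{s}{p}$. Equality together with the threshold would force $p(\alpha-1)\ge s$. That is the non-existence range of Example~\ref{CounterExampleExistence1}, which is why functions modelled on that example looked natural to you.

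\emph{Why your plan fails above the threshold.} There, equality is genuinely admissible (Theorem~\ref{LPImprovingSufficient}), so no logarithms can produce divergence. Take $\tilde f(t)=t^{-s/p}(\ln t)^{-\beta}$ near infinity. Membership in $L^p_{\mu_1,\mu_2}$ needs $p\beta>1$. One gets $R_\alpha f(\xi)\approx e^{-s\theta/p}\theta^{-\beta}$ with $\theta=d(0,\xi)$. In the equality case the $L^r$ tail is then $\int^\infty\theta^{-r\beta}\,\mathrm{d}\theta$, which is finite for $r\ge p$.

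The paper's own proof also leans on this observation without saying so. It gets strictness in the borderline case $\nu_1+\nu_2=r(\alpha-1)-(n-1)$ from a $\ln\cosh\lambda$ factor. In the case $\nu_1+\nu_2<r(\alpha-1)-(n-1)$ it only obtains a redundant condition.

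The second gap is the case $\mu_1\le-n$. Your test functions $\chi_{B(0,2)}$ and $\chi_{B(0,\epsilon)}$ lie in $L^p_{\mu_1,\mu_2}(\mathbb{H}^n)$ only when $\mu_1>-n$. The hypotheses allow $\mu_1\le-n$, compensated by a large $\mu_2$, and then your first two steps say nothing. The paper's Case II handles this by inserting a factor $\sinh^{-\mu_1/p}d(0,x)$. Within your scheme there is a simpler repair:
\begin{itemize}
\item Prove $\nu_1>k-n$ with $\chi_{\{1<d(0,x)<2\}}$. It lies in every weighted space and still gives $R_\alpha f(\xi)\ge c>0$ for $d(0,\xi)<\tfrac{1}{2}$.
\item For $\mu_1\le-n$, the right-hand inequality is automatic once $\nu_1>k-n$ is known, since $\frac{\mu_1+n}{p}\le 0<\alpha+\frac{\nu_1+n-k}{r}$.
\end{itemize}

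With these repairs your argument is complete, and it is genuinely different from the paper's. The paper uses one family $\chi_{B(0,\lambda)}(x)\cosh d(0,x)$ for both $\lambda\to 0$ and $\lambda\to\infty$. It computes $\|R_\alpha f_\lambda\|$ through ${}_2F_1$ asymptotics and must split on the sign of $\nu_1+\nu_2-r(\alpha-1)+n-1$, because large balls also charge planes near the origin. Your thin annuli at radius $R$ only see planes at distance about $R$. They give $\frac{\nu_1+\nu_2+n-1}{r}\le\frac{s}{p}$ uniformly, using elementary estimates only.
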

		\begin{proof}
			We prove the necessary conditions stated in Equation \eqref{LPImprovingNecessaryEquation} in two cases.
			
			\textbf{Case I:} First, we consider $\mu_1 > - n$. Then, for $\lambda > 0$, we look at the function $f_{\lambda}: \mathbb{H}^n \rightarrow \mathbb{C}$ given by $f_{\lambda} \left( x \right) = \chi_{B \left( 0, \lambda \right)} \left( x \right) \cosh d \left( 0, x \right)$. It is clear that $f_{\lambda} \in L^p_{\mu_1, \mu_2} \left( \mathbb{H}^n \right)$, whenever $\mu_1 > -n$. Moreover, we have, by using polar decomposition on $\mathbb{H}^n$,
			\begin{align*}
				\| f_{\lambda} \|_{L^p_{\mu_1, \mu_2} \left( \mathbb{H}^n \right)} &= C \left[ \int\limits_{0}^{\lambda} \sinh^{\mu_1 + n - 1} t \cosh^{\mu_2 + p} t \ \mathrm{d}t \right]^{p}.
			\end{align*}
			By substituting $\sinh^2 t = v \sinh^2 \lambda$ and simplifying, we obtain,
			\begin{align*}
				\| f_{\lambda} \|_{L^p_{\mu_1, \mu_2} \left( \mathbb{H}^n \right)} &= C \sinh^{\frac{\mu_1 + n}{p}} \lambda \left[ \int\limits_{0}^{1} v^{\frac{\mu_1 + n}{2} - 1} \left( 1 + v \sinh^2 \lambda \right)^{\frac{\mu_2 - 1 + p}{2}} \mathrm{d}v \right]^p.
			\end{align*}
			Using Equation 15.6.1 of \cite{Olver} followed by an application of Equation 15.8.1 in \cite{Olver}, we obtain
			\begin{align*}
				\| f_{\alpha} \|_{L^p_{\mu_1, \mu_2} \left( \mathbb{H}^n \right)} &= C \sinh^{\frac{\mu_1 + n}{p}} \lambda \cosh^{\frac{\mu_2 - 1}{p} + 1} \lambda \times \\
                &\left[ {}_2F_1 \left( \frac{1 - \mu_2 - p}{2}, 1; 1 + \frac{\mu_1 + n}{2}; \tanh^2 \lambda \right) \right]^{\frac{1}{p}}.
			\end{align*}
			Due to Equation 15.4.1 of \cite{Olver}, we have the following estimate.
			\begin{equation}
				\label{LPNormFBallEstimate}
				\| f_{\lambda} \|_{L^p_{\mu_1, \mu_2} \left( \mathbb{H}^n \right)} \leq \begin{cases}
	\lambda^{\frac{\mu_1 + n}{p}}, & \lambda \text{ is small}. \\
	e^{\left( \frac{\mu_1 + \mu_2 + n - 1}{p} + 1 \right) \lambda}, & \lambda \text{ is large}.	
\end{cases}
			\end{equation}
			On the other hand, we have from Equations \eqref{XiKMeasure} and \eqref{FormualRadialFunctionsEquation},
			\begin{align*}
				&\| R_{\alpha}f_{\lambda} \|_{L^r_{\nu_1, \nu_2} \left( \Xi_k \left( \mathbb{H}^n \right) \right)} \\
                &= C \left[ \int\limits_{0}^{\infty} \left| \frac{1}{\cosh^{\alpha - 1} r} \int\limits_{\cosh r}^{\infty} \tilde{f}_{\lambda} \left( t \right) \left( t^2 - 1 \right)^{\frac{\alpha}{2} - 1} \mathrm{d}t \right|^r \sinh^{n - k - 1 - \nu_1} r \cosh^{k + \nu_2} r \ \mathrm{d}r \right]^{\frac{1}{r}},
			\end{align*}
			where, $\tilde{f}_{\lambda} \left( t \right) = \chi_{\left( 1, \cosh \lambda \right)} \left( t \right) t$. Substituting $\cosh r = s$ and simplifying, we get
			\begin{align*}
				&\| R_{\alpha}f_{\lambda} \|_{L^r_{\nu_1, \nu_2} \left( \Xi_k \left( \mathbb{H}^n \right) \right)} \\
                &= C \left[ \int\limits_{1}^{\cosh \lambda} \left( \cosh^2 \lambda - s^2 \right)^{\frac{r \alpha}{2}} \left( s^2 - 1 \right)^{\frac{\nu_1 + n - k}{2}  - 1} s^{k + \nu_2 - r \left( \alpha - 1 \right)} \mathrm{d}s \right]^{\frac{1}{r}}.
			\end{align*}
			Next, we substitute $s^2 = 1 + u \sinh^2 \lambda$ and simplify to obtain,
			\begin{align*}
				\| R_{\alpha}f_{\lambda} \|_{L^r_{\nu_1, \nu_2} \left( \Xi_k \left( \mathbb{H}^n \right) \right)} &= C \sinh^{\alpha + \frac{\nu_1 + n - k}{r}} \lambda \times \\
                &\left[ \int\limits_{0}^{1} u^{\frac{\nu_1 + n - k}{2} - 1} \left( 1 - u \right)^{\frac{r \alpha}{2}} \left( 1 + u \sinh^2 \lambda \right)^{\frac{k - 1 - r \left( \alpha - 1 \right) + \nu_2}{2}} \mathrm{d}u \right]^{\frac{1}{r}}.
			\end{align*}
			It is clear that if $v_1 \leq k - n$, the integral in the above equation is divergent and therefore $R_{\alpha}f_{\lambda} \notin L^r_{\nu_1, \nu_2} \left( \Xi_k \left( \mathbb{H}^n \right) \right)$. That is, $v_1 > k - n$ is a necessary condition for Inequality \eqref{LPImprovngInequality} to hold. Under this assumption on $\nu_1$, using Equation 15.6.1 and Equation 15.8.1 of \cite{Olver}, we get
			\begin{align*}
				&\| R_{\alpha}f_{\lambda} \|_{L^r_{\nu_1, \nu_2} \left( \Xi_k \left( \mathbb{H}^n \right) \right)} \\
                &= C \sinh^{\alpha + \frac{\nu_1 + n - k}{2}} \lambda \cosh^{\frac{\nu_1 + k - 1 - r \left( \alpha - 1 \right)}{r}} \lambda \times \\
				&\left[ {}_2F_1 \left( \frac{r \left( \alpha - 1 \right) - k + 1 - \alpha_2}{2}, 1 + \frac{r \alpha}{2}; 1 + \frac{\nu_1 + r \alpha + n - k}{2}; \tanh^2 r \right) \right]^{\frac{1}{r}}.
			\end{align*}
			Now, using Equations 15.4.20--15.4.23 of \cite{Olver}, we get the following estimate.
			\begin{equation}
				\label{LRNormRAlphsEstimate}
                \begin{aligned}
				&\| R_{\alpha}f_{\lambda} \|_{L^r_{\nu_1, \nu_2} \left( \Xi_k \left( \mathbb{H}^n \right) \right)} \\
                &\geq C \begin{cases}
											\lambda^{\alpha + \frac{\nu_1 + n - k}{r}}, & \lambda \text{ is small}. \\
											e^{\left( \frac{\nu_1 + \nu_2 + n - 1}{r} + 1 \right) \lambda}, & \lambda \text{ is large and } \nu_1 + \nu_2 > r \left( \alpha - 1 \right) - n + 1. \\
											e^{\left( \frac{\nu_1 + \nu_2 + n - 1}{r} + 1 \right) \lambda} \ln \cosh \lambda, & \lambda \text{ is large and } \nu_1 + \nu_2 = r \left( \alpha - 1 \right) - n + 1. \\
											e^{\alpha \lambda}, & \lambda \text{ is large and } \nu_1 + \nu_2 < r \left( \alpha - 1 \right) - n + 1.
										\end{cases}
                \end{aligned}
			\end{equation}
			We now test Inequality \eqref{LPImprovngInequality} that we desire against the function $f_{\lambda}$, for varying $\lambda > 0$. Using the estimates given in Equations \eqref{LPNormFBallEstimate} and \eqref{LRNormRAlphsEstimate}, we get for small $\lambda > 0$,
			$$\lambda^{\alpha + \frac{\nu_1 + n - k}{r}} \leq C \lambda^{\frac{\mu_1 + n}{p}}.$$
			As $\lambda \rightarrow 0$, the above inequality forces
			$$\frac{\mu_1 + n}{p} - \frac{\nu_1 + n - k}{r} \leq \alpha.$$
			Next, for large $\lambda > 0$, we have the following three cases. First, if $\nu_1 + \nu_2 > r \left( \alpha - 1 \right) - n + 1$, then, using Equations \eqref{LPNormFBallEstimate} and \eqref{LRNormRAlphsEstimate} in Inequality \eqref{LPImprovngInequality}, we get
			$$e^{\left( \frac{\nu_1 + \nu_2 + n - 1}{r} + 1 \right) \lambda} \leq C e^{\left( \frac{\mu_1 + \mu_2 + n - 1}{p} + 1 \right) \lambda}.$$
			As $\lambda \rightarrow \infty$, the above inequality forces
			$$\frac{\nu_1 + \nu_2 + n - 1}{r} \leq \frac{\mu_1 + \mu_2 + n - 1}{p},$$
			or, what is the same,
			$$\frac{\nu_2 + n - k}{r} - \frac{\mu_2 - 1}{p} \leq \frac{\mu_1 + n}{p} - \frac{\nu_1 + n - k}{r}.$$
			Next, we consider the case when $\nu_1 + \nu_2 = r \left( \alpha - 1 \right) - n + 1$. Also, we observe that for $\lambda > 0$ large, $\cosh \lambda$ behaves as $e^{\lambda}$ and therefore, $\ln \cosh \lambda$ behaves as $\lambda$. Hence, using Equations \eqref{LPNormFBallEstimate} and \eqref{LRNormRAlphsEstimate} in Inequality \eqref{LPImprovngInequality}, we get
			$$\lambda e^{\left( \frac{\nu_1 + \nu_2 + n - 1}{r} + 1 \right) \lambda} \leq C e^{\left( \frac{\mu_1 + \mu_2 + n - 1}{p} + 1 \right) \lambda}.$$
			As $\lambda \rightarrow \infty$, the above inequality forces
			$$\frac{\nu_1 + \nu_2 + n - 1}{r} < \frac{\mu_1 + \mu_2 + n - 1}{p}.$$
			The equality is not expected in this case due to the presence of a linear factor on the left hand side of the desired estimate.
			
			Finally, for $\nu_1 + \nu_2 < r \left( \alpha - 1 \right) - n + 1$, we get
			$$e^{\alpha \lambda} \leq C e^{\left( \frac{\mu_1 + \mu_2 + n - 1}{p} + 1 \right) \lambda}.$$
			As $\lambda \rightarrow \infty$, the above inequality forces
			$$\alpha \leq \frac{\mu_1 + \mu_2 + n - 1}{p} + 1,$$
			which is redundant due to the fact that $p < \frac{\mu_1 + \mu_2 + n - 1}{\alpha - 1}$. This gives the necessary conditions mentioned in Equation \eqref{LPImprovingNecessaryEquation} when $\mu_1 > -n$.
			
			\textbf{Case II:} When $\mu_2 \leq -n$, we consider the function $f_{\lambda}: \mathbb{H}^n \rightarrow \mathbb{C}$ defined by $f_{\lambda} \left( x \right) = \chi_{B \left( 0, \lambda \right)} \left( x \right) \sinh^{- \frac{\mu_1}{p}} d \left( 0, x \right) \cosh d \left( 0, x \right)$, and proceed as in Case I to get conditions of Equation \eqref{LPImprovingNecessaryEquation}.
		\end{proof}
		Next, we see that the conditions mentioned in Theorem \ref{LPImprovingNecessary} are also sufficient.
		\begin{theorem}
			\label{LPImprovingSufficient}
			Let $\alpha \geq 1$, $\mu_1 + \mu_2 > \alpha - n$ and $1 \leq p < \frac{\mu_1 + \mu_2 + n - 1}{\alpha - 1}$ and $r \geq p$.
			\begin{enumerate}
				\item[\mylabel{LPLRAlphaMoreThan2}{(A)}] For $\alpha \geq 2$, Inequality \eqref{LPImprovngInequality} holds if and only if $\nu_1$ and $\nu_2$ satisfy the conditions given in Equation \eqref{LPImprovingNecessaryEquation}.
				\item[\mylabel{LPLRAlphaLessThan2}{(B)}] For $1 \leq \alpha < 2$, Inequality \eqref{LPImprovngInequality} holds if and only if $\nu_1$ and $\nu_2$ satisfy the conditions given in Equation \eqref{LPImprovingNecessaryEquation} and $\frac{1}{p} - \frac{1}{r} \leq \frac{\alpha}{2}$.
			\end{enumerate}
		\end{theorem}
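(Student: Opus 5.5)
Necessity of \eqref{LPImprovingNecessaryEquation} is already Theorem \ref{LPImprovingNecessary}. So the work is sufficiency, plus necessity of the extra condition $\frac1p-\frac1r\le\frac\alpha2$ in part \ref{LPLRAlphaLessThan2}. The plan is to reduce everything to one-dimensional Hardy-type inequalities via Proposition \ref{FormulaForRadialFunctionsRAlpha}.

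For radial $f=\tilde f(\cosh d(0,\cdot))$, the polar decomposition and the substitution $t=\cosh s$ give
\[
\|f\|^p_{L^p_{\mu_1,\mu_2}}=C\int_1^\infty|\tilde f(t)|^p(t^2-1)^{\frac{\mu_1+n-2}{2}}t^{\mu_2}\,\mathrm{d}t .
\]
Likewise \eqref{XiKMeasure} with $s=\cosh d(0,\xi)$ gives
\[
\|R_\alpha f\|^r_{L^r_{\nu_1,\nu_2}}=C\int_1^\infty\Big|s^{1-\alpha}\int_s^\infty\tilde f(t)(t^2-s^2)^{\frac\alpha2-1}\,\mathrm{d}t\Big|^r(s^2-1)^{\frac{\nu_1+n-k-2}{2}}s^{k+\nu_2}\,\mathrm{d}s .
\]
So we must bound a weighted Riemann--Liouville-type operator of order $\alpha/2$ in the variable $t^2$, with kernel $(t^2-s^2)^{\alpha/2-1}$, from one power-weighted $L^p(1,\infty)$ space to another.

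I split the $s$-range into $s\in(1,2)$ (near the origin, weights behave like powers of $s-1$) and $s\ge 2$ (at infinity, weights are powers of $s$). Similarly I split $\tilde f=\tilde f\chi_{(1,4)}+\tilde f\chi_{(4,\infty)}$.

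\textbf{Case $\alpha\ge2$.} The kernel $(t^2-s^2)^{\alpha/2-1}$ is bounded by $t^{\alpha-2}$, which is nonsingular. Then $|R_\alpha f|\lesssim s^{1-\alpha}\int_s^\infty|\tilde f(t)|t^{\alpha-2}\,\mathrm{d}t$, a pure Hardy (adjoint) averaging operator.

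\begin{itemize}
\item At infinity, I apply the weighted Hardy inequality (Bradley/Muckenhoupt criterion for $p\le r$, power weights). Its $A$-type condition $\sup_R(\int_2^R w)^{1/r}(\int_R^\infty v^{1-p'})^{1/p'}<\infty$ reduces exactly to $\frac{\nu_1+\nu_2+n-1}{r}\le\frac{\mu_1+\mu_2+n-1}{p}$. The strict version is needed in the borderline $\nu_1+\nu_2=r(\alpha-1)-(n-1)$, where the weight $s^{r(1-\alpha)}w(s)$ is $s^{-1}$ and produces a logarithm. This is the left inequality of \eqref{LPImprovingNecessaryEquation}. Convergence of $\int_R^\infty v^{1-p'}t^{(\alpha-2)p'}$ uses $p<\frac{\mu_1+\mu_2+n-1}{\alpha-1}$.
\item Near $s=1$, the weight $(s-1)^{(\nu_1+n-k-2)/2}$ is integrable since $\nu_1>k-n$. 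For $\tilde f$ supported in $(1,4)$, the inner integral is bounded by $\int_1^4|\tilde f|$ when $\alpha\ge2$. By H\"older this is controlled by $\|f\|_{L^p_{\mu_1,\mu_2}}$ as long as $\int_1^4(t-1)^{-\frac{\mu_1+n-2}{2}\frac{p'}{p}}\,\mathrm{d}t<\infty$.
\end{itemize}

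If that last integral diverges, I instead use the finer bound $(t^2-s^2)^{\alpha/2-1}\lesssim(t-1)^{\alpha/2-1}$ for $t>s$. I then run a Hardy inequality in the variable $u=t-1$, with condition $\frac{\mu_1+n}{p}-\frac{\nu_1+n-k}{r}\le\alpha$; this is the right inequality, encoding the scaling at small distance. Cross terms (large $t$, small $s$) are handled by H\"older, since the $s$-weight is integrable there.

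\textbf{Case $1\le\alpha<2$.} The kernel is singular at $t=s$. I write it as the near-diagonal piece $t\in(s,2s)$ plus the far piece $t\ge2s$. The far piece is handled as above. The near piece is a local fractional integral of order $\beta=\alpha/2$ in $t^2$. On dyadic blocks $s\sim 2^j$ (and $s-1\sim2^{-j}$ near $1$), where the weights are essentially constant, I apply the one-dimensional Hardy--Littlewood--Sobolev inequality for fractional integration on an interval. This gives $L^p\to L^r$ boundedness precisely when $\frac1p-\frac1r\le\beta=\frac\alpha2$ (with $r<\infty$, or the trivial $L^p\to L^p$ bound when $r=p$). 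The weights provide geometric factors that sum over $j$ under the same conditions as before, and the $\ell^p\subset\ell^r$ embedding for $r\ge p$ lets me recombine the blocks.

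For necessity of $\frac1p-\frac1r\le\frac\alpha2$, I test on $\tilde f=\chi_{(a,a+\delta)}$ with fixed $a>1$ and $\delta\to0$. Then $\|f\|\sim\delta^{1/p}$, while $R_\alpha f(s)\gtrsim\delta^{\alpha/2}$ for $s\in(a-\delta,a)$, giving $\|R_\alpha f\|\gtrsim\delta^{\alpha/2+1/r}$. Hence $\frac\alpha2+\frac1r\ge\frac1p$.

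The main obstacle is the weighted fractional-integration step in case \ref{LPLRAlphaLessThan2} near the endpoint $\frac1p-\frac1r=\frac\alpha2$. There one needs a genuine HLS inequality rather than Young/Schur, and one must make sure that the dyadic decomposition keeps the weights comparable to constants and sums without loss. I would first try the Schur test with power weights to get the non-endpoint range, and reserve HLS on each block for the endpoint.
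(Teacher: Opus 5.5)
Your argument is essentially sound, but it takes a different route from the paper.

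\textbf{The paper's route.} The paper substitutes $u=s^2-1$ and $v=t^2-1$. After bounding the leftover factors by constants using \eqref{LPImprovingNecessaryEquation}, $R_\alpha$ becomes a Mellin convolution on $\left((0,\infty),\frac{\mathrm{d}t}{t}\right)$ with kernel $G(x)=x^{\frac{\nu_1+n-k}{2r}}(1-x)^{\frac{\alpha}{2}-1}\chi_{(0,1)}(x)$. A single application of Young's inequality then treats small and large distances at once, giving (A) and the non-endpoint part of (B). The endpoint $\frac1p-\frac1r=\frac\alpha2$ is taken from a weighted Riemann--Liouville theorem, and necessity of $\frac1p-\frac1r\le\frac\alpha2$ comes from a dilation argument on $\mathbb{R}$.

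\textbf{Your route.} You localize, use Bradley--Muckenhoupt Hardy inequalities for the off-diagonal parts, and use blockwise Young/HLS for the diagonal singularity. This is longer. In exchange, each condition in \eqref{LPImprovingNecessaryEquation} appears transparently as a Bradley constant as $R\to1^+$ or $R\to\infty$, or as a blockwise constant. In particular, your computation handles the regime $\nu_2+k-1<r(\alpha-1)$ directly. The paper instead disposes of that regime by a monotonicity reduction in $\nu_2$, and raising $\nu_2$ can destroy the left-hand condition, for instance when $\nu_1$ is large. Your thin-annulus test for the necessity of $\frac1p-\frac1r\le\frac\alpha2$ is also more elementary than the paper's dilation argument. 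Incidentally, the strict inequality in the borderline case $\nu_1+\nu_2=r(\alpha-1)-(n-1)$ is automatic, since there $\frac{\nu_1+\nu_2+n-1}{r}=\alpha-1<\frac{\mu_1+\mu_2+n-1}{p}$.

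\textbf{First repair: localization near $s=1$.} For $1\le\alpha<2$, your near-diagonal region $t\in(s,2s)$ is not localized near $s=1$. For $s-1\sim2^{-j}$ it contains essentially the whole range $s<t<2$, where the input weight $(t-1)^{\frac{\mu_1+n-2}{2}}$ is far from constant. So the blockwise HLS step does not apply as stated. The fix is as follows.
\begin{enumerate}
\item Near $s=1$, take the diagonal piece to be $t-1<2(s-1)$, and run the dyadic blocks in $s-1$.
\item On the complement $2(s-1)\le t-1\le 3$, note that $t-s\ge\frac{t-1}{2}$ and $t+s\ge 2$. Hence $t^2-s^2\ge t-1$, and so $(t^2-s^2)^{\frac\alpha2-1}\le(t-1)^{\frac\alpha2-1}$.
\item Your local Hardy inequality in the variable $t-1$ then applies, with the same Bradley condition $\frac{\mu_1+n}{p}-\frac{\nu_1+n-k}{r}\le\alpha$.
\end{enumerate}

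\textbf{Second repair: the case $p=1$ at the endpoint.} HLS requires $p>1$, and at $p=1$, $\frac1p-\frac1r=\frac\alpha2$ with $\alpha<2$, the inequality is actually false. Take $\tilde f=\delta^{-1}\chi_{(a,a+\delta)}$ with $a>1$ fixed, so $\|f\|_{L^1_{\mu_1,\mu_2}}$ stays bounded. For $s\in(a-\eta,a-\delta)$ one gets $R_\alpha f(s)\gtrsim(a-s)^{\frac\alpha2-1}$. Since $r\left(\frac\alpha2-1\right)=-1$, this gives $\|R_\alpha f\|^r_{L^r_{\nu_1,\nu_2}}\gtrsim\log\frac{\eta}{\delta}$. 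This case must therefore be excluded from (B). The paper's appeal to the weighted Riemann--Liouville bound cannot cover it either.
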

		\begin{proof}
			The necessity of the conditions of Equation \eqref{LPImprovingNecessaryEquation} is shown in Theorem \ref{LPImprovingNecessary}. We only focus on their sufficiency in this proof.
			\begin{enumerate}
				\item[\mylabel{LPLRAlphaMoreThan2Proof}{(A)}] First, we take up the case when $\alpha \geq 2$. For radial functions $f \in L^p_{\mu_1, \mu_2} \left( \mathbb{H}^n \right)$, we have from Equation \eqref{FormualRadialFunctionsEquation} and writing $\cosh r = s$, where $r = d \left( 0, \xi \right)$,
				\begin{align*}
					&\| R_{\alpha}f \|_{L^r_{\nu_1, \nu_2} \left( \Xi_k \left( \mathbb{H}^n \right) \right)} \\
                    &=  C \left[ \int\limits_{1}^{\infty} \left| \int\limits_{s}^{\infty} \tilde{f} \left( t \right) \left( t^2 - s^2 \right)^{\frac{\alpha}{2} - 1} \mathrm{d}t \right|^r s^{\nu_2 + k - r \left( \alpha - 1 \right)} \left( s^2 - 1 \right)^{\frac{\nu_1 + n - k}{2} - 1} \mathrm{d}s \right]^{\frac{1}{r}}.
				\end{align*}
				Substituting $t^2 - 1 = v$ and $s^2 - 1 = u$ and simplifying, we get
				\begin{equation}
					\label{LRNormRepresentationRAlpha}
					\begin{aligned}
						&\| R_{\alpha}f \|_{L^r_{\nu_1, \nu_2} \left( \Xi_k \left( \mathbb{H}^n \right) \right)} \\
                        &= C \left[ \int\limits_{0}^{\infty} \left| \int\limits_{u}^{\infty} \tilde{f} \left( \sqrt{1 + v} \right) v^{\frac{\mu_1 + n}{2p}} \left( 1 + v \right)^{\frac{\mu_2 - 1}{2p}} \left( 1 - \frac{u}{v} \right)^{\frac{\alpha}{2} - 1} \left( \frac{u}{v} \right)^{\frac{\nu_1 + n - k}{2r}} \times \right. \right. \\
						&\left. \left. \frac{v^{\frac{\alpha}{2} + \frac{\nu_1 + n - k}{2r} - \frac{\mu_1 + n}{2p}} \left( 1 + u \right)^{\frac{\nu_2 + k - 1 - r \left( \alpha - 1 \right)}{2r}}}{\left( 1 + v \right)^{\frac{1}{2} + \frac{\mu_2 - 1}{2p}}} \frac{\mathrm{d}v}{v} \right|^r \frac{\mathrm{d}u}{u} \right]^{\frac{1}{r}}.
					\end{aligned}
				\end{equation}
				Now, by using triangle inequality of the $L^r$-norm, we get
				\begin{align*}
					&\| R_{\alpha}f \|_{L^r_{\nu_1, \nu_2} \left( \Xi_k \left( \mathbb{H}^n \right) \right)} \leq C \left[ \int\limits_{0}^{\infty} \left| \int\limits_{u}^{\infty} \tilde{f} \left( \sqrt{1 + v} \right) \chi_{\left( 0, 1 \right)} \left( v \right) v^{\frac{\mu_1 + n}{2p}} \left( 1 + v \right)^{\frac{\mu_2 - 1}{2p}} \times \right. \right. \\
					&\left. \left. \left( 1 - \frac{u}{v} \right)^{\frac{\alpha}{2} - 1} \left( \frac{u}{v} \right)^{\frac{\nu_1 + n - k}{2r}} \frac{v^{\frac{\alpha}{2} + \frac{\nu_1 + n - k}{2r} - \frac{\mu_1 + n}{2p}} \left( 1 + u \right)^{\frac{\nu_2 + k - 1 - r \left( \alpha - 1 \right)}{2r}}}{\left( 1 + v \right)^{\frac{1}{2} + \frac{\mu_2 - 1}{2p}}} \frac{\mathrm{d}v}{v} \right|^r \frac{\mathrm{d}u}{u} \right]^{\frac{1}{r}} \\
					&+ C \left[ \int\limits_{0}^{\infty} \left| \int\limits_{u}^{\infty} \tilde{f} \left( \sqrt{1 + v} \right) \chi_{\left( 1, \infty \right)} \left( v \right) v^{\frac{\mu_1 + n}{2p}} \left( 1 + v \right)^{\frac{\mu_2 - 1}{2p}} \times \right. \right. \\
					&\left. \left. \left( 1 - \frac{u}{v} \right)^{\frac{\alpha}{2} - 1} \left( \frac{u}{v} \right)^{\frac{\nu_1 + n - k}{2r}} \frac{v^{\frac{\alpha}{2} + \frac{\nu_1 + n - k}{2r} - \frac{\mu_1 + n}{2p}} \left( 1 + u \right)^{\frac{\nu_2 + k - 1 - r \left( \alpha - 1 \right)}{2r}}}{\left( 1 + v \right)^{\frac{1}{2} + \frac{\mu_2 - 1}{2p}}} \frac{\mathrm{d}v}{v} \right|^r \frac{\mathrm{d}u}{u} \right]^{\frac{1}{r}}.
				\end{align*}
				Now, we notice that when $v \leq 1$, we also require $u \leq 1$, for otherwise the integral in the first term of the above inequality is zero. With these assumptions and Equation \eqref{LPImprovingNecessaryEquation}, it is easy to see that
				$$\frac{v^{\frac{\alpha}{2} + \frac{\nu_1 + n - k}{2r} - \frac{\mu_1 + n}{2p}} \left( 1 + u \right)^{\frac{\nu_2 + k - 1 - r \left( \alpha - 1 \right)}{2r}}}{\left( 1 + v \right)^{\frac{1}{2} + \frac{\mu_2 - 1}{2p}}} \leq C,$$
				whenever $v \in \left( 0, 1 \right)$. Thus, we have,
				\begin{align*}
					&\| R_{\alpha}f \|_{L^r_{\nu_1, \nu_2} \left( \Xi_k \left( \mathbb{H}^n \right) \right)} \\
                    &\leq C \left[ \int\limits_{0}^{\infty} \left| \int\limits_{u}^{\infty} \tilde{f} \left( \sqrt{1 + v} \right) \chi_{\left( 0, 1 \right)} \left( v \right) v^{\frac{\mu_1 + n}{2p}} \left( 1 + v \right)^{\frac{\mu_2 - 1}{2p}} \times \right. \right. \\
                    &\left. \left. \left( 1 - \frac{u}{v} \right)^{\frac{\alpha}{2} - 1} \left( \frac{u}{v} \right)^{\frac{\nu_1 + n - k}{2r}}  \frac{\mathrm{d}v}{v} \right|^r \frac{\mathrm{d}u}{u} \right]^{\frac{1}{r}} \\
					&+ C \left[ \int\limits_{0}^{\infty} \left| \int\limits_{u}^{\infty} \tilde{f} \left( \sqrt{1 + v} \right) \chi_{\left( 1, \infty \right)} \left( v \right) v^{\frac{\mu_1 + n}{2p}} \left( 1 + v \right)^{\frac{\mu_2 - 1}{2p}} \left( 1 - \frac{u}{v} \right)^{\frac{\alpha}{2} - 1} \left( \frac{u}{v} \right)^{\frac{\nu_1 + n - k}{2r}} \times \right. \right. \\
					&\left. \left. \left( \frac{1 + u}{1 + v} \right)^{\frac{\nu_2 + k - 1}{2r} - \frac{\alpha - 1}{2}} \left( 1 + v \right)^{\frac{\nu_1 + \nu_2 + n - 1}{2r} - \frac{\mu_1 + \mu_2 + n - 1}{2p}} \frac{\mathrm{d}v}{v} \right|^r \frac{\mathrm{d}u}{u} \right]^{\frac{1}{r}}.
				\end{align*}
				Now, since $\nu_2$ is a power of $\cosh$, we may assume that $\frac{\nu_2 + k - 1}{2r} \geq \frac{\alpha - 1}{2}$. This is because if $\nu_2' \leq \nu_2$, then we have,
				$$\| R_{\alpha}f \|_{L^r_{\nu_1, \nu_2'} \left( \Xi_k \left( \mathbb{H}^n \right) \right)} \leq \| R_{\alpha}f \|_{L^r_{\nu_1, \nu_2} \left( \Xi_k \left( \mathbb{H}^n \right) \right)}.$$
				With the assumption on $\nu_2$, we see that $\left( \frac{1 + u}{1 + v} \right)^{\frac{\nu_2 + k - 1}{2r} - \frac{\alpha - 1}{2}} \leq 1$, since $v \geq u$. Also, due to Equation \eqref{LPImprovingNecessaryEquation}, we have $\left( 1 + v \right)^{\frac{\nu_1 + \nu_2 + n - 1}{2r} - \frac{\mu_1 + \mu_2 + n - 1}{2p}} \leq 1$. Therefore, we get,
				\begin{align*}
					&\| R_{\alpha}f \|_{L^r_{\nu_1, \nu_2} \left( \Xi_k \left( \mathbb{H}^n \right) \right)} \\
                    &\leq C \left[ \int\limits_{0}^{\infty} \left| \int\limits_{u}^{\infty} \tilde{f} \left( \sqrt{1 + v} \right) \chi_{\left( 0, 1 \right)} \left( v \right) v^{\frac{\mu_1 + n}{2p}} \left( 1 + v \right)^{\frac{\mu_2 - 1}{2p}} \times \right. \right. \\
                    &\left. \left. \left( 1 - \frac{u}{v} \right)^{\frac{\alpha}{2} - 1} \left( \frac{u}{v} \right)^{\frac{\nu_1 + n - k}{2r}}  \frac{\mathrm{d}v}{v} \right|^r \frac{\mathrm{d}u}{u} \right]^{\frac{1}{r}} \\
					&+ C \left[ \int\limits_{0}^{\infty} \left| \int\limits_{u}^{\infty} \tilde{f} \left( \sqrt{1 + v} \right) \chi_{\left( 1, \infty \right)} \left( v \right) v^{\frac{\mu_1 + n}{2p}} \left( 1 + v \right)^{\frac{\mu_2 - 1}{2p}} \times \right. \right. \\
                    &\left. \left. \left( 1 - \frac{u}{v} \right)^{\frac{\alpha}{2} - 1} \left( \frac{u}{v} \right)^{\frac{\nu_1 + n - k}{2r}} \frac{\mathrm{d}v}{v} \right|^r \frac{\mathrm{d}u}{u} \right]^{\frac{1}{r}} \\
					&\leq C \| F * G \|_{L^r \left( \left( 0, \infty \right), \frac{\mathrm{d}t}{t} \right)},
				\end{align*}
				where,
				\begin{equation}
					\label{ConvolutionFLPLR}
					F \left( t \right) = \tilde{f} \left( \sqrt{1 + t} \right) t^{\frac{\mu_1 + n}{2p}} \left( 1 + v \right)^{\frac{\mu_2 - 1}{2p}},
				\end{equation}
				and
				\begin{equation}
					\label{ConvolutionGLPLR}
					G \left( t \right) = t^{\frac{\nu_1 + n - k}{2r}} \left( 1 - t \right)^{\frac{\alpha}{2} - 1}.
				\end{equation}
				Using Young's convolution inequality, we obtain
				\begin{align*}
					\| R_{\alpha}f \|_{L^r_{\nu_1, \nu_2} \left( \Xi_k \left( \mathbb{H}^n \right) \right)} \leq C \| F \|_{L^p \left( \left( 0, \infty \right), \frac{\mathrm{d}t}{t} \right)} \| G \|_{L^q \left( \left( 0, \infty \right), \frac{\mathrm{d}t}{t} \right)},
				\end{align*}
				where, $\frac{1}{p} + \frac{1}{q} = 1 + \frac{1}{r}$. It is clear that $\| F \|_{L^p \left( \left( 0, \infty \right), \frac{\mathrm{d}t}{t} \right)} = C \| f \|_{L^p_{\mu_1, \mu_2} \left( \mathbb{H}^n \right)}$. Also, since $\nu_1 > k - n$ (as seen in Equation \eqref{LPImprovingNecessaryEquation}) and $\alpha \geq 2$, we observe that $\| G \|_{L^q \left( \left( 0, \infty \right), \frac{\mathrm{d}t}{t} \right)} < + \infty$. This gives Inequality \eqref{LPImprovngInequality} for radial functions in $L^p_{\mu_1, \mu_2} \left( \mathbb{H}^n \right)$.
				\item[\mylabel{LPLRAlphaLessThan2Proof}{(B)}] Now, we take up the case when $1 \leq \alpha < 2$. First, we observe that when $\frac{1}{p} - \frac{1}{r} < \frac{\alpha}{2}$, the proof of Inequality \eqref{LPImprovngInequality} is verbatim to that of Part \ref{LPLRAlphaMoreThan2Proof} above. We only consider the case when $\frac{1}{p} - \frac{1}{r} = \frac{1}{2}$. In this case, we employ the boundedness of Riemann-Liouville fractional integrals to get the desired result. For a detailed survey on fractional integrals and derivatives, we refer the reader to Chapters 2--4 of \cite{RubinBookFIR}. We begin by rewriting Equation \eqref{LRNormRepresentationRAlpha} as
				\begin{align*}
					&\| R_{\alpha}f \|_{L^r_{\nu_1, \nu_2} \left( \Xi_k \left( \mathbb{H}^n \right) \right)} \\
                    &= C \left[ \int\limits_{0}^{\infty} \left| \int\limits_{u}^{\infty} \frac{\tilde{f} \left( \sqrt{1 + v} \right)}{\sqrt{1 + v}} \left( v - u \right)^{\frac{\alpha}{2} - 1} u^{\frac{\nu_1 + n - k}{2r} - \frac{1}{r}} \left( 1 + u \right)^{\frac{\nu_2 + k - 1}{2r} - \frac{\alpha - 1}{2}} \mathrm{d}v \right| \mathrm{d}u \right]^{\frac{1}{r}}.
				\end{align*}
				Let $\varphi \left( t \right) = \frac{\tilde{f} \left( \sqrt{1 + t} \right)}{\sqrt{1 + t}}$ and $\rho \left( t \right) = t^{\frac{\nu_1 + n - k}{2r} - \frac{1}{r}} \left( 1 + t \right)^{\frac{\nu_2 + k - 1}{2r} - \frac{\alpha - 1}{2}}$. Then, the above equation can be written as
				$$\| R_{\alpha}f \|_{L^r_{\nu_1, \nu_2} \left( \Xi_k \left( \mathbb{H}^n \right) \right)} = C \| \rho I^{\frac{\alpha}{2}}_- \varphi \|_{L^r \left( \left( 0, \infty \right), \mathrm{d}x \right)}.$$
				Using Theorem 3.64 in \cite{RubinBookFIR} about the weighted boundedness of Riemann-Liouville fractional integrals, we get
				\begin{align*}
					\| R_{\alpha}f \|_{L^r_{\nu_1, \nu_2} \left( \Xi_k \left( \mathbb{H}^n \right) \right)} \leq C &\left[ \int\limits_{0}^{\infty} \left| \tilde{f} \left( \sqrt{1 + v} \right) \right|^p v^{\frac{\mu_1 + n}{2} - 1} \left( 1 + v \right)^{\frac{\mu_2 - 1}{2}} \times \right. \\
                    &\left.v^{p \left( \frac{\nu_1 + n - k}{2r} + \frac{\alpha}{2} - \frac{\mu_1 + n}{2p} \right)} \left( 1 + v \right)^{p \left( \frac{\nu_2 + k - 1}{2r} - \frac{\alpha}{2} - \frac{\mu_2 - 1}{2p} \right)} \mathrm{d}v \right]^{\frac{1}{p}}.
				\end{align*}
				Here, we have utilised the fact that $\frac{1}{p} - \frac{1}{r} = \frac{\alpha}{2}$. Now, we observe that Equation \eqref{LPImprovingNecessaryEquation} gives that
				$$\frac{\nu_1 + n - k}{2r} + \frac{\alpha}{2} - \frac{\mu_1 + n}{2p} \geq 0.$$
				Hence, we get
				\begin{align*}
					\| R_{\alpha}f \|_{L^r_{\nu_1, \nu_2} \left( \Xi_k \left( \mathbb{H}^n \right) \right)} \leq C &\left[ \int\limits_{0}^{\infty} \left| \tilde{f} \left( \sqrt{1 + v} \right) \right|^p v^{\frac{\mu_1 + n}{2} - 1} \left( 1 + v \right)^{\frac{\mu_2 - 1}{2}} \times \right. \\
                    &\left. \left( 1 + v \right)^{p \left( \frac{\nu_1 + \nu_2 + n - 1}{2r} - \frac{\mu_2 + \mu_2 + n - 1}{2p} \right)} \mathrm{d}v \right]^{\frac{1}{p}}.
				\end{align*}
				Again, Equation \eqref{LPImprovingNecessaryEquation} suggests that $\frac{\nu_1 + \nu_2 + n - 1}{2r} - \frac{\mu_1 + \mu_2 + n - 1}{2p} \leq 0$. Hence, we get,
				\begin{align*}
					\| R_{\alpha}f \|_{L^r_{\nu_1, \nu_2} \left( \Xi_k \left( \mathbb{H}^n \right) \right)} &\leq C \left[ \int\limits_{0}^{\infty} \left| \tilde{f} \left( \sqrt{1 + v} \right) \right|^p v^{\frac{\mu_1 + n}{2} - 1} \left( 1 + v \right)^{\frac{\mu_2 - 1}{2}} \mathrm{d}v \right]^{\frac{1}{p}} \\
                    &= C \| f \|_{L^p_{\mu_1, \mu_2} \left( \mathbb{H}^n \right)}.
				\end{align*}
				This completes the proof of the sufficiency. The necessity of the condition $\frac{1}{p} - \frac{1}{r} \leq \frac{\alpha}{2}$ is given in Remark \ref{NecessityNewConditionLPLR}.
			\end{enumerate}
		\end{proof}
		\begin{remark}
			\label{NecessityNewConditionLPLR}
			\normalfont
			We now see that when $1 \leq \alpha < 2$, the condition $\frac{1}{p} - \frac{1}{r} \leq \frac{\alpha}{2}$ is necessary. The proof that we give does not explicitly use the fact that $\alpha < 2$. However, let us observe that for $\alpha \geq 2$, the condition is redundant since $\frac{1}{p} - \frac{1}{r} \leq 1 \leq \frac{\alpha}{2}$. Since the operator $R_{\alpha}$ preserves non-negativity of a function, we assume that the functions $f$ considered in this proof are non-negative. To prove the necessity, let us begin by substituting $u = e^x$ and $v = e^y$ in Equation \eqref{LRNormRepresentationRAlpha}. Then, we get,
			\begin{align*}
				&\| R_{\alpha}f \|_{L^r_{\nu_1, \nu_2} \left( \Xi_k \left( \mathbb{H}^n \right) \right)} = C \left[ \int\limits_{\mathbb{R}} \left| \int\limits_{\mathbb{R}} \tilde{f} \left( \sqrt{1 + e^y} \right) e^{\frac{\mu_1 + n}{2p} y} \left( 1 + e^y \right)^{\frac{\mu_2 - 1}{2p}} \left( 1 - e^{x - y} \right)^{\frac{\alpha}{2} - 1} \times \right. \right. \\
				&\left. \left. e^{\left( \frac{\nu_1 + n - k}{2r} \right) \left( x - y \right)} \chi_{\left( - \infty, 0 \right)} \left( x - y \right) \frac{e^{\left( \frac{\alpha}{2} + \frac{\nu_1 + n - k}{2r} - \frac{\mu_1 + n}{2p} \right) y} \left( 1 + e^x \right)^{\frac{\nu_2 + k - 1 - r \left( \alpha - 1 \right)}{2r}}}{\left( 1 + e^y \right)^{\frac{1}{2} + \frac{\mu_2 - 1}{2p}}} \mathrm{d}y \right|^r \mathrm{d}x \right]^{\frac{1}{r}} \\
				&\geq C \left[ \int\limits_{\mathbb{R}} \left| \int\limits_{\mathbb{R}} \tilde{f} \left( \sqrt{1 + e^y} \right) e^{\frac{\mu_1 + n}{2p} y} \left( 1 + e^y \right)^{\frac{\mu_2 - 1}{2p}} \left( 1 - e^{x - y} \right)^{\frac{\alpha}{2} - 1} \times \right. \right. \\
				&\left. \left. e^{\left( \frac{\nu_1 + n - k}{2r} \right) \left( x - y \right)} \chi_{\left( -1, 0 \right)} \left( x - y \right) \frac{e^{\left( \frac{\alpha}{2} + \frac{\nu_1 + n - k}{2r} - \frac{\mu_1 + n}{2p} \right) y} \left( 1 + e^x \right)^{\frac{\nu_2 + k - 1 - r \left( \alpha - 1 \right)}{2r}}}{\left( 1 + e^y \right)^{\frac{1}{2} + \frac{\mu_2 - 1}{2p}}} \mathrm{d}y \right|^r \mathrm{d}x \right]^{\frac{1}{r}}.
			\end{align*}
			Next, let us observe that due to the conditions of Equation \eqref{NecessityNewConditionLPLR} and the restriction that $x - y \in \left( -1, 0 \right)$, we have,
			$$\frac{e^{\left( \frac{\alpha}{2} + \frac{\nu_1 + n - k}{2r} - \frac{\mu_1 + n}{2p} \right) y} \left( 1 + e^x \right)^{\frac{\nu_2 + k - 1 - r \left( \alpha - 1 \right)}{2r}}}{\left( 1 + e^y \right)^{\frac{1}{2} + \frac{\mu_2 - 1}{2p}}} \geq C \left( 1 + e^y \right)^{\frac{\nu_2 + k - 1}{2r} - \frac{\alpha}{2} - \frac{\mu_2 - 1}{2p}}.$$
			Hence, we get
			\begin{align*}
				&\| R_{\alpha}f \|_{L^r_{\nu_1, \nu_2} \left( \Xi_k \left( \mathbb{H}^n \right) \right)} \\
				&\geq C \left[ \int\limits_{\mathbb{R}} \left| \int\limits_{\mathbb{R}} \tilde{f} \left( \sqrt{1 + e^y} \right) e^{\left( \frac{\mu_1 + n}{2p} \right) y} \left( 1 + e^y \right)^{\frac{\nu_2 + k - 1}{2r} - \frac{\alpha}{2}} \left( 1 - e^{x - y} \right)^{\frac{\alpha}{2} - 1} \times \right. \right. \\
                &\left. \left. e^{\left( \frac{\nu_1 + n - k}{2r} \right) \left( x - y \right)} \chi_{\left( -1, 0 \right)} \left( x - y \right) \mathrm{d}y \right|^r \mathrm{d}x \right]^{\frac{1}{r}} \\
				&= C \| \varphi * \psi \|_{L^r \left( \mathbb{R} \right)},
			\end{align*}
			where, 
            $$\varphi \left( x \right) = \tilde{f} \left( \sqrt{1 + e^x} \right) e^{\left( \frac{\nu_1 + n}{2p} \right)x} \left( 1 + e^x \right)^{\frac{\nu_2 + k - 1}{2r} - \frac{\alpha}{2}},$$ and 
            $$\psi \left( x \right) = \left( 1 - e^x \right)^{\frac{\alpha}{2} - 1} e^{\left( \frac{\nu_1 + n - k}{2r} \right)x} \chi_{\left( -1, 0 \right)} \left( x \right).$$
            Due to Equation \eqref{LPImprovingNecessaryEquation}, it is clear that for $f \in L^p_{\mu_1, \mu_2} \left( \mathbb{H}^n \right)$, we have $\varphi \in L^p \left( \mathbb{R} \right)$. Moreover, we may write
			$$\| f \|_{L^p_{\mu_1, \mu_2} \left( \mathbb{H}^n \right)} = \| \varphi \rho \|_{L^p \left( \mathbb{R} \right)},$$
			where, $\rho \left( x \right) = \left( 1 + e^x \right)^{\frac{\mu_2 - 1}{2p} + \frac{\alpha}{2} - \frac{\nu_1 + k - 1}{2r}}$. Hence, Inequality \eqref{LPImprovngInequality} implies the following boundedness inequality of convolutions.
			\begin{equation}
				\label{NeessaryConvolutionInequality}
				\| \varphi * \psi \|_{L^r \left( \mathbb{R} \right)} \leq C \| \varphi \rho \|_{L^p \left( \mathbb{R} \right)}.
			\end{equation}
			Now, we consider $\lambda > 1$ and define $\varphi_{\lambda} \left( x \right) = \varphi \left( \lambda x \right)$. Then, we have,
			$$\| \varphi_{\lambda} \rho \|_{L^p \left( \mathbb{R} \right)} = \frac{1}{\lambda^{\frac{1}{p}}} \left[ \int\limits_{\mathbb{R}} \left| \varphi \left( x \right) \left( 1 + e^{\frac{x}{\lambda}} \right)^{\frac{\mu_2 - 1}{2p} + \frac{\alpha}{2} - \frac{\nu_1 + k - 1}{2r}} \right|^p \mathrm{d}x \right]^{\frac{1}{p}} = \frac{1}{\lambda^{\frac{1}{p}}} \| \varphi \rho_{\frac{1}{\lambda}} \|_{L^p \left( \mathbb{R}^n \right)}.$$
			By an application of dominated convergence theorem, we have that $\| \varphi \rho_{\frac{1}{\lambda}} \|_{L^p \left( \mathbb{R} \right)} \rightarrow C \| \varphi \|_{L^p \left( \mathbb{R} \right)}$, as $\lambda \rightarrow \infty$. Also, by an argument similar to that given in Remark 3.3(i) of \cite{KumarRayWtEst}, we get
			$$\| \varphi_{\lambda} * \psi \|_{L^r \left( \mathbb{R} \right)} \geq \frac{1}{\lambda^{\frac{\alpha}{2} + \frac{1}{r}}} \| \varphi * \psi \|_{L^r \left( \mathbb{R} \right)}.$$
			Using these observations in Inequality \eqref{NecessityNewConditionLPLR}, we get
			$$\frac{1}{\lambda^{\frac{\alpha}{2} + \frac{1}{r} - \frac{1}{p}}} \| \varphi * \psi \|_{L^r \left( \mathbb{R} \right)} \leq C \| \varphi \rho_{\frac{1}{\lambda}} \|_{L^p \left( \mathbb{R} \right)}.$$
			Hence, as $\lambda \rightarrow \infty$, we require $\frac{1}{p} - \frac{1}{r} \leq \frac{\alpha}{2}$.
		\end{remark}
		Theorems \ref{LPImprovingNecessary} and \ref{LPImprovingSufficient} together with Remark \ref{NecessityNewConditionLPLR} completely answer Question \ref{LPImprovingQuestion}. We now move to answer Question \ref{LPLInfinityQuestion}. As before, we first obtain necessary conditions for Inequality \eqref{LPLInfinityInequality} to hold.
		\begin{theorem}
			\label{LPLInfinityNecessary}
			Let $\alpha \geq 1$, $\mu_1 + \mu_2 > \alpha - n$ and $1 \leq p < \frac{\mu_1 + \mu_2 + n - 1}{\alpha - 1}$. Then, Inequality \eqref{LPLInfinityInequality} holds only if
			\begin{equation}
				\label{LPLInfinityNecessaryEquation}
				\nu_1 \geq \max \left\lbrace 0, \frac{\mu_1 + n}{p} - \alpha \right\rbrace \text{ and } \nu_1 + \nu_2 \leq \frac{\mu_1 + \mu_2 + n - 1}{p}.
			\end{equation}
		\end{theorem}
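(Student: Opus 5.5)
Test Inequality \eqref{LPLInfinityInequality} against the same family of truncated radial functions used in the proof of Theorem \ref{LPImprovingNecessary}. Take $f_{\lambda}(x) = \chi_{B(0,\lambda)}(x)\cosh d(0,x)$ when $\mu_1 > -n$ (and the modified family $\chi_{B(0,\lambda)}\sinh^{-\mu_1/p}d(0,\cdot)\cosh d(0,\cdot)$ otherwise). Estimate \eqref{LPNormFBallEstimate} already gives the right side: $\|f_\lambda\|_{L^p_{\mu_1,\mu_2}}$ is bounded by $C\lambda^{(\mu_1+n)/p}$ for small $\lambda$ and by $Ce^{(\frac{\mu_1+\mu_2+n-1}{p}+1)\lambda}$ for large $\lambda$. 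The task is therefore a lower bound for the weighted sup-norm of $R_\alpha f_\lambda$, obtained from Proposition \ref{FormulaForRadialFunctionsRAlpha}.

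With $s = d(0,\xi)$, the proposition gives
$$R_{\alpha}f_{\lambda}(\xi) = \frac{C}{\cosh^{\alpha-1}s}\int_{\cosh s}^{\cosh\lambda} t\,(t^2-\cosh^2 s)^{\frac{\alpha}{2}-1}\,\mathrm{d}t = \frac{C}{\alpha}\,\frac{(\cosh^2\lambda-\cosh^2 s)^{\alpha/2}}{\cosh^{\alpha-1}s}$$
for $s<\lambda$, and $0$ otherwise. This is explicit, which is convenient. The quantity to bound from below is
$$\sup_{0<s<\lambda}\sinh^{\nu_1}s\,\cosh^{\nu_2-\alpha+1}s\,(\cosh^2\lambda-\cosh^2 s)^{\alpha/2}.$$

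\emph{The condition $\nu_1\ge 0$.} Fix $\lambda=1$ and let $s\to 0$. The factor $(\cosh^2 1-\cosh^2 s)^{\alpha/2}$ stays bounded below, so the product behaves like $s^{\nu_1}$. If $\nu_1<0$ this is unbounded, so the left side of \eqref{LPLInfinityInequality} is infinite while $\|f_1\|$ is finite.

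\emph{The condition $\nu_1\ge \frac{\mu_1+n}{p}-\alpha$.} Take $\lambda$ small and evaluate at $s=\lambda/2$. Then $\cosh^2\lambda-\cosh^2(\lambda/2)\asymp\lambda^2$, so the left side is at least $c\lambda^{\nu_1+\alpha}$, while the right side is at most $C\lambda^{(\mu_1+n)/p}$. Letting $\lambda\to0$ forces $\nu_1+\alpha\ge(\mu_1+n)/p$.

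\emph{The condition $\nu_1+\nu_2\le\frac{\mu_1+\mu_2+n-1}{p}$.} Take $\lambda$ large and evaluate at $s=\lambda-1$. Then $\cosh^2\lambda-\cosh^2 s\asymp e^{2\lambda}$, so the left side is at least $c\,e^{(\nu_1+\nu_2-\alpha+1)\lambda}e^{\alpha\lambda}=c\,e^{(\nu_1+\nu_2+1)\lambda}$. Comparing with $e^{(\frac{\mu_1+\mu_2+n-1}{p}+1)\lambda}$ and letting $\lambda\to\infty$ gives the condition. If $\nu_1+\nu_2-\alpha+1<0$, evaluating at $s$ near $\lambda$ is no longer optimal, but any single choice of $s$ gives a valid lower bound, so $s=\lambda-1$ still works.

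The main obstacle is Case II, where $\mu_1\le -n$ and $f_\lambda$ carries the factor $\sinh^{-\mu_1/p}$. In this case the closed form is replaced by an integral. One has to check two things. First, the small-$\lambda$ estimate must still produce the exponent $(\mu_1+n)/p$: the norm integrand is $\sinh^{n-1}\cosh^{\mu_2+p}$, which yields $\lambda^{n/p}$ on the norm side, and $R_\alpha f_\lambda$ near $s=\lambda/2$ gains a matching factor $\lambda^{-\mu_1/p}$. Second, the large-$\lambda$ growth rates must be unchanged. For both, I would bound the $t$-integral from below by restricting to $t\in(\cosh(\lambda-\tfrac12),\cosh\lambda)$, where the power of $\sqrt{t^2-1}$ is comparable to a constant times its value at $t=\cosh\lambda$. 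This reduces Case II to the same exponent bookkeeping as above.
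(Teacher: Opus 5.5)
Your proposal is correct and is essentially the paper's proof. You use the same test functions, the norm bound \eqref{LPNormFBallEstimate}, and the explicit formula \eqref{RAlphaBallComputation}, which is your closed form since $\cosh^2\lambda-\cosh^2 s=\sinh^2\lambda-\sinh^2 s$. Like the paper, you evaluate at a $\xi$ with $d(0,\xi)$ comparable to $\lambda$ (the paper takes $\sinh^2 d(0,\xi)=\frac12\sinh^2\lambda$ in both regimes) and use positivity of $R_\alpha f_\lambda$ near $d(0,\xi)=0$ to get $\nu_1\ge 0$.

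The only slip is in Case II, where your interval $(\cosh(\lambda-\frac12),\cosh\lambda)$ is empty for $\lambda<\frac14$. Since $\mu_1<0$ there, use $(t^2-1)^{-\mu_1/(2p)}\ge\sinh^{-\mu_1/p}d(0,\xi)$ on the whole range of integration, which handles both regimes at once. Moreover, the small-$\lambda$ condition is in any case implied by $\nu_1\ge 0$ when $\mu_1+n\le 0$.
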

		\begin{proof}
			To obtain the necessary conditions, we consider two cases.
			
			\textbf{Case I:} First, let us assume that $\mu_1 > -n$. In this case, for $\lambda > 0$, we consider the function $f_{\lambda}: \mathbb{H}^n \rightarrow \mathbb{C}$ given by $f_{\lambda} \left( x \right) = \chi_{B \left( 0, \lambda \right)} \left( x \right) \cosh d \left( 0, x \right)$. The estimate for $\| f_{\lambda} \|_{L^p_{\mu_1, \mu_2} \left( \mathbb{H}^n \right)}$ was obtained in Equation \eqref{LPNormFBallEstimate}. On the other hand, using Equation \eqref{FormualRadialFunctionsEquation} and the definition of $f$, it is easy to see that
			\begin{equation}
				\label{RAlphaBallComputation}
				R_{\alpha}f_{\lambda} \left( \xi \right) = C \frac{\sinh^{\alpha} \lambda}{\cosh^{\alpha - 1} d \left( 0, \xi \right)} \left( 1 - \frac{\sinh^2 d \left( 0, \xi \right)}{\sinh^2 \lambda} \right)^{\frac{\alpha}{2}}.
			\end{equation}
			Hence, by choosing $\xi \in \Xi_k \left( \mathbb{H}^n \right)$ such that $\sinh^2 d \left( 0, \xi \right) = \frac{1}{2} \sinh^2 \lambda$, we observe that
            \begin{align*}
                &\| \sinh^{\nu_1} d \left( 0, \cdot \right) \cosh^{\nu_2} d \left( 0, \cdot \right) R_{\alpha}f_{\lambda} \left( \cdot \right) \|_{L^{\infty} \left( \Xi_k \left( \mathbb{H}^n \right) \right)} \\
                &\geq C \sinh^{\nu_1 + \alpha} \lambda \left( 1 + \cosh^2 \lambda \right)^{\nu_2 - \alpha + 1}.
            \end{align*}
			Now, using the fact that for small $\lambda > 0$, $\sinh \lambda$ behaves as $\lambda$ and $\cosh \lambda$ behaves as a constant, and for large $\lambda > 0$, both $\sinh \lambda$ and $\cosh \lambda$ behave as $e^{\lambda}$, we get the following estimate.
			\begin{equation}
				\label{LInfinityNormRAlphaEstimate}
				\| \sinh^{\nu_1} d \left( 0, \cdot \right) \cosh^{\nu_2} d \left( 0, \cdot \right) R_{\alpha}f_{\lambda} \left( \cdot \right) \|_{L^{\infty} \left( \Xi_k \left( \mathbb{H}^n \right) \right)} \geq C \begin{cases}
							\lambda^{\nu_1 + \alpha}, & \lambda \text{ is small}. \\
							e^{\left( \nu_1 + \nu_2 + 1 \right) \lambda}, & \lambda \text{ is large}.
						\end{cases}
			\end{equation}
			Now, we test Inequality \eqref{LPLInfinityInequality} against the function $f_{\lambda}$. Using Equations \eqref{LPNormFBallEstimate} and \eqref{LInfinityNormRAlphaEstimate} and taking $\lambda \rightarrow 0$ and $\lambda \rightarrow \infty$, we get the conditions mentioned in Equation \eqref{LPLInfinityNecessaryEquation}. The condition $\nu_1 \geq 0$ is observed from the fact that for any $\xi \in \Xi_k \left( \mathbb{H}^n \right)$ with $0 \in \xi$, we have from Equation \eqref{RAlphaBallComputation}, $R_{\alpha}f_{\lambda} \left( \xi \right) = C \sinh^{\alpha} \lambda > 0$. Hence, if $\nu_1 < 0$, we would have $\| \sinh^{\nu_1} d \left( 0, \cdot \right) \cosh^{\nu_2} d \left( 0, \cdot \right) R_{\alpha}f_{\lambda} \left( \cdot \right) \|_{L^{\infty} \left( \Xi_k \left( \mathbb{H}^n \right) \right)} = + \infty$ and Inequality \eqref{LPLInfinityInequality} cannot hold.
			
			\textbf{Case II:} When $\mu_1 \leq -n$, we consider the function $f_{\lambda}: \mathbb{H}^n \rightarrow \mathbb{C}$ defined by $f_{\lambda} \left( x \right) = \chi_{B \left( 0, \lambda \right)} \left( x \right) \sinh^{- \frac{\mu_1}{p}} d \left( 0, x \right) \cosh d \left( 0, x \right)$ and proceed as in Case I to obtain the conditions of Equation \eqref{LPLInfinityNecessaryEquation}.
		\end{proof}
		We now see the sufficiency of the conditions on $\nu_1$ and $\nu_2$ in obtaining Inequality \eqref{LPLInfinityInequality}.
		\begin{theorem}
			\label{LPLInfinitySufficient}
			Let $\alpha \geq 1$, $\mu_1 + \mu_2 > \alpha - n$ and $1 \leq p < \frac{\mu_1 + \mu_2 + n - 1}{\alpha - 1}$. Let $\nu_1, \nu_2 \in \mathbb{R}$ satisfy Equation \eqref{LPLInfinityNecessaryEquation}. Further, we assume that when $p = \frac{\mu_1 + n}{\alpha}$. Then, we have the following.
			\begin{enumerate}
				\item[\mylabel{LPLInfinityAlphaMoreThan2}{(A)}] For $\alpha \geq 2$, we have Inequality \eqref{LPLInfinityInequality} for all radial functions $f \in L^p_{\mu_1, \mu_2} \left( \mathbb{H}^n \right)$.
				\item[\mylabel{LPLInfinityAlphaLessThan2}{(B)}] For $1 \leq \alpha < 2$, we have Inequality \eqref{LPLInfinityInequality} for all radial functions $f \in L^p_{\mu_1, \mu_2} \left( \mathbb{H}^n \right)$ if and only if $p > \frac{2}{\alpha}$.
			\end{enumerate}
		\end{theorem}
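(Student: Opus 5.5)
Using the radial representation of Proposition \ref{FormulaForRadialFunctionsRAlpha}, we write $s = \cosh d(0,\xi)$ and, as in the proof of Theorem \ref{LPImprovingSufficient}, substitute $t^2-1=v$, $s^2-1=u$. This gives
\begin{equation*}
\sinh^{\nu_1} d(0,\xi)\cosh^{\nu_2} d(0,\xi)\, |R_{\alpha}f(\xi)| \leq C\, u^{\frac{\nu_1}{2}}(1+u)^{\frac{\nu_2-\alpha+1}{2}} \int\limits_{u}^{\infty} \frac{|\tilde f(\sqrt{1+v})|}{\sqrt{1+v}}\,(v-u)^{\frac{\alpha}{2}-1}\,\mathrm{d}v .
\end{equation*}
We then factor the integrand as $F(v)\,v^{-\frac{1}{p}}$ times a kernel. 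Here $F(v) = \tilde f(\sqrt{1+v})\,v^{\frac{\mu_1+n}{2p}}(1+v)^{\frac{\mu_2-1}{2p}}$, so that $\|F\|_{L^p((0,\infty),\mathrm{d}v/v)} = C\|f\|_{L^p_{\mu_1,\mu_2}(\mathbb{H}^n)}$. H\"older's inequality with exponents $p,p'$ in the measure $\mathrm{d}v/v$ bounds the left side by $C\|f\|_{L^p_{\mu_1,\mu_2}}\, \sup_{u>0} J(u)$, where
\begin{equation*}
J(u)^{p'} = u^{\frac{\nu_1 p'}{2}}(1+u)^{\frac{(\nu_2-\alpha+1)p'}{2}} \int\limits_{u}^{\infty} (v-u)^{(\frac{\alpha}{2}-1)p'}\, v^{p' - \frac{(\mu_1+n)p'}{2p}}(1+v)^{-\frac{p'}{2}-\frac{(\mu_2-1)p'}{2p}}\,\frac{\mathrm{d}v}{v}.
\end{equation*}
So the whole proof reduces to showing $\sup_u J(u)<\infty$ under Equation \eqref{LPLInfinityNecessaryEquation}.

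\textbf{Local integrability at $v=u$.} This requires $(\frac{\alpha}{2}-1)p'>-1$, i.e.\ $p>\frac{2}{\alpha}$. For $\alpha\geq2$ this holds automatically (for $p=1$, $p'=\infty$, the condition is just $\alpha\geq 2$ and we use a sup bound). For $1\leq\alpha<2$ it is exactly the extra hypothesis of Part \ref{LPLInfinityAlphaLessThan2}.

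\textbf{Behaviour as $v\to\infty$.} The total exponent of $v$ is $p'\big(\frac{\alpha}{2}-\frac12-\frac{\mu_1+\mu_2+n-1}{2p}\big)-1$, which is $<-1$ because $p<\frac{\mu_1+\mu_2+n-1}{\alpha-1}$.

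\textbf{Scaling in $u$.} Substituting $v=uw$ and splitting into $u\leq1$ and $u\geq1$:
\begin{enumerate}
\item[(i)] For $u$ small, $J(u)\approx u^{\frac{\nu_1}{2}+\frac{\alpha}{2}-\frac{\mu_1+n}{2p}}$ times a convergent integral. This is bounded by $\nu_1\geq\frac{\mu_1+n}{p}-\alpha$.
\item[(ii)] For $u$ large, $J(u)\approx u^{\frac{\nu_1+\nu_2}{2}-\frac{\mu_1+\mu_2+n-1}{2p}}$. This is bounded by the second condition.
\end{enumerate}
Some care is needed for the portion $w\in(1,1/u)$ when $u$ is small, where $1+v\approx1$. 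This piece gives a factor $\int_1^{1/u} w^{(\frac{\alpha}{2}-1)p'+p'-\frac{(\mu_1+n)p'}{2p}-1}\,\mathrm{d}w$. If this exponent is $\geq -1$, the growth must be absorbed by $u^{\nu_1/2}$ together with $\nu_1\geq0$, which I would check case by case.

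\textbf{Main obstacle: the endpoint $\nu_1=\frac{\mu_1+n}{p}-\alpha$ with $p=\frac{\mu_1+n}{\alpha}$.} This is presumably what the truncated hypothesis ``when $p=\frac{\mu_1+n}{\alpha}$'' refers to, and it is the step I expect to be hardest. Here the small-$u$ integral becomes logarithmically divergent. I would first try to show that the intended assumption there forces $\nu_1>0$ or some strict inequality, and otherwise treat it by a direct $L^\infty$ estimate.

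\textbf{Necessity of $p>\frac{2}{\alpha}$ when $1\leq\alpha<2$.} I would test Inequality \eqref{LPLInfinityInequality} on $\tilde f(t)=\chi_{(s_0,s_0+\delta)}(t)(t-s_0)^{-\frac{1}{p}}|\ln(t-s_0)|^{-\frac{2}{p}}$ (localized in the $t$-variable, with $p\leq\frac{2}{\alpha}$). This function lies in $L^p_{\mu_1,\mu_2}$, but $R_\alpha f(\xi)=+\infty$ for $\cosh d(0,\xi)=s_0$ and is unbounded near it, since $(t-s_0)^{\frac{\alpha}{2}-1-\frac{1}{p}}$ is non-integrable there.
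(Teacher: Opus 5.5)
Your sufficiency argument is correct and follows the paper's route. You apply H\"older at a fixed $\xi$ against the radial kernel and then analyse the resulting one-dimensional integral, using the scaling $v=uw$ where the paper uses hypergeometric asymptotics. The small-$u$ check you defer is exactly the paper's Cases I(A)--I(C): as $u\to 0$, $J(u)^{p'}$ behaves like $u^{p'\nu_1/2}$, $u^{p'\nu_1/2}\ln(1/u)$, or $u^{\frac{p'}{2}(\nu_1+\alpha-\frac{\mu_1+n}{p})}$, according as $p>$, $=$, $<\frac{\mu_1+n}{\alpha}$.

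\textbf{The gap: necessity of $p>\frac{2}{\alpha}$ at the endpoint $p=\frac{2}{\alpha}$ with $1<\alpha<2$.} There $\frac{\alpha}{2}-1-\frac1p=-1$, and your non-integrability claim ignores the logarithm. With $x=t-s_0$, at $\cosh d(0,\xi)=s_0$ you get
\[
R_{\alpha}f(\xi)\approx\int_0^{\delta}x^{-1}\,|\ln x|^{-\frac{2}{p}}\,\mathrm{d}x=\int_0^{\delta}x^{-1}\,|\ln x|^{-\alpha}\,\mathrm{d}x<\infty ,
\]
because $\frac{2}{p}=\alpha>1$.

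In fact $R_{\alpha}f$ stays bounded near $s_0$. For $\cosh d(0,\xi)<s_0$ it is dominated by this value. For $\cosh d(0,\xi)=s_0+\epsilon$, the range $x\in(\epsilon,2\epsilon)$ contributes only $O(|\ln\epsilon|^{-\alpha})$. So this $f$ gives no contradiction. This endpoint is exactly the delicate case the paper treats separately. Example \ref{CoutnerExampleLPLInfinityAnuulus} only yields $p\geq\frac{2}{\alpha}$, and excluding $p=\frac{2}{\alpha}$ needs the Hardy--Littlewood fractional-integral argument of Remark \ref{NonboundednessRemarkRLFI}.

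\textbf{Two easy repairs.}
\begin{enumerate}
\item At $p=\frac{2}{\alpha}>1$, replace the logarithmic exponent $-\frac{2}{p}$ by $-1$, i.e.\ take $\tilde f(t)=\chi_{(s_0,s_0+\delta)}(t)(t-s_0)^{-\frac1p}|\ln(t-s_0)|^{-1}$. This lies in $L^p_{\mu_1,\mu_2}(\mathbb{H}^n)$, since $\int_0^\delta x^{-1}|\ln x|^{-p}\,\mathrm{d}x<\infty$ for $p>1$. Meanwhile $R_\alpha f=+\infty$ at $\cosh d(0,\xi)=s_0$, and it grows like $\ln|\ln\epsilon|$ at $\cosh d(0,\xi)=s_0+\epsilon$.
\item Use your own reduction. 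Set $K(u,v)=(v-u)^{\frac{\alpha}{2}-1}v^{1-\frac{\mu_1+n}{2p}}(1+v)^{-\frac12-\frac{\mu_2-1}{2p}}$. The functional $F\mapsto\int_u^\infty F(v)K(u,v)\,\frac{\mathrm{d}v}{v}$ has norm exactly $\|K(u,\cdot)\|_{L^{p'}(\mathrm{d}v/v)}$; test with truncations of $K(u,\cdot)^{p'-1}$, for which $R_\alpha f$ is continuous in $d(0,\xi)$, so the essential supremum is a supremum. Hence, for radial $f$, the inequality holds if and only if $\sup_u J(u)<\infty$. When $1\leq\alpha<2$ and $p\leq\frac{2}{\alpha}$, either $(\frac{\alpha}{2}-1)p'\leq-1$, or ($p=1$) $K(u,\cdot)$ is unbounded as $v\downarrow u$. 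In both cases $J(u)=+\infty$ for every $u$. This settles all of necessity, including the endpoint, and is cleaner than the paper's two-step argument.
\end{enumerate}

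\textbf{Your ``main obstacle''.} The same sharpness of H\"older shows your fallback ``direct $L^\infty$ estimate'' for $\nu_1=0$ at $p=\frac{\mu_1+n}{\alpha}>1$ cannot succeed: there $J(u)\to\infty$ logarithmically as $u\to0$, so the estimate is false. The truncated hypothesis is indeed $\nu_1>0$ in that case, as the paper states after Remark \ref{NonboundednessRemarkRLFI}. With $\nu_1>0$, the factor $u^{p'\nu_1/2}$ kills the $\ln(1/u)$ and that case is immediate.
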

		\begin{proof}
			We first observe that using Equation \eqref{FormualRadialFunctionsEquation}, we get for any $\xi \in \Xi_k \left( \mathbb{H}^n \right)$,
			\begin{equation}
				\label{RepresentationLInfinityNormRAlpha}
                \begin{aligned}
                    &\left| \sinh^{\nu_1} d \left( 0, \xi \right) \cosh^{\nu_2} d \left( 0, \xi \right) R_{\alpha}f \left( \xi \right) \right| \\
                    &\leq C \sinh^{\nu_1} d \left( 0, \xi \right) \cosh^{\nu_2 - \alpha + 1} d \left( 0, \xi \right) \\
                    &\int\limits_{\cosh d \left( 0, \xi \right)}^{\infty} \left| \tilde{f} \left( t \right) \right| t^{\alpha - 2} \left( 1 - \frac{\cosh^2 d \left( 0, \xi \right)}{t^2} \right)^{\frac{\alpha}{2} - 1} \mathrm{d}t.
                \end{aligned}
			\end{equation}
			\begin{enumerate}
				\item[\mylabel{LPLInfinityAlphaMoreThan2Proof}{(A)}] We now consider the case when $\alpha \geq 2$. We prove the result in two parts.
				
				\textbf{Case I:} First, let us assume that $p > 1$. By an application of H\"{o}lder's inequality, we get
				\begin{align*}
					&\left| \sinh^{\nu_1} d \left( 0, \xi \right) \cosh^{\nu_2} d \left( 0, \xi \right) R_{\alpha}f \left( \xi \right) \right| \\
                    &\leq C \| f \|_{L^p_{\mu_1, \mu_2} \left( \mathbb{H}^n \right)} \sinh^{\nu_1} d \left( 0, \xi \right) \cosh^{\nu_2 - \alpha + 1} d \left( 0, \xi \right) \times \\
					&\left[ \int\limits_{\cosh d \left( 0, \xi \right)}^{\infty} \frac{t^{p' \left( \alpha - \frac{\mu_2}{p} - 2 \right)}}{\left( t^2 - 1 \right)^{p' \left( \frac{\mu_1 + n}{2p} - \frac{1}{p} \right)}} \left( 1 - \frac{\cosh^2 d \left( 0, \xi \right)}{t^2} \right)^{p' \left( \frac{\alpha}{2} - 1 \right)} \mathrm{d}t \right]^{\frac{1}{p'}}.
				\end{align*}
				Let us consider
				$$I = \int\limits_{\cosh d \left( 0, \xi \right)}^{\infty} \frac{t^{p' \left( \alpha - \frac{\mu_2}{p} - 2 \right)}}{\left( t^2 - 1 \right)^{p' \left( \frac{\mu_1 + n}{2p} - \frac{1}{p} \right)}} \left( 1 - \frac{\cosh^2 d \left( 0, \xi \right)}{t^2} \right)^{p' \left( \frac{\alpha}{2} - 1 \right)} \mathrm{d}t.$$
				By substituting $t^2 = \frac{\cosh^2 d \left( 0, \xi \right)}{u}$ and simplifying, we get
				\begin{equation}
					\label{HypergeometricIntegralLPLInfinityProof}
					I = \sech^{p' \left( \frac{\mu_1 + \mu_2 + n}{p} - \alpha \right) + 1} d \left( 0, \xi \right) \int\limits_{0}^{1} \frac{u^{\frac{p'}{2} \left( \frac{\mu_1 + \mu_2 + n}{p} - \alpha \right) - \frac{1}{2}} \left( 1 - u \right)^{p' \left( \frac{\alpha}{2} - 1 \right)}}{\left( 1 - u \sech^2 d \left( 0, \xi \right) \right)^{p' \left( \frac{\mu_1 + n}{2p} - \frac{1}{p} \right)}} \mathrm{d}u.
				\end{equation}
				Using Equation 15.6.1 of \cite{Olver}, we get
				\begin{align*}
					I &= \sech^{p' \left( \frac{\mu_1 + \mu_2 + n}{p} - \alpha \right) + 1} d \left( 0, \xi \right) {}_2F_1 \left( a, b; c; \sech^2 d \left( 0, \xi \right) \right),
				\end{align*}
                where, we have $a = p' \left( \frac{\mu_1 + n}{2p} - \frac{1}{p} \right)$, $b = \frac{p'}{2} \left( \frac{\mu_1 + \mu_2 + n}{p} - \alpha \right) + \frac{1}{2}$ and $c = \frac{p'}{2} \left( \frac{\mu_1 + \mu_2 + n}{p} - 2 \right) + \frac{3}{2}$. Thus, we have,
				\begin{align*}
					&\left| \sinh^{\nu_1} d \left( 0, \xi \right) \cosh^{\nu_2} d \left( 0, \xi \right) R_{\alpha}f \left( \xi \right) \right| \\
                    &\leq C \| f \|_{L^p_{\mu_1, \mu_2} \left( \mathbb{H}^n \right)} \sinh^{\nu_1} d \left( 0, \xi \right) \cosh^{\nu_2 - \frac{\mu_1 + \mu_2 + n - 1}{p}} d \left( 0, \xi \right) \times \\
                    &\left[ {}_2F_1 \left( a, b; c; \sech^2 d \left( 0, \xi \right) \right) \right]^{\frac{1}{p}}.
				\end{align*}
				For large values of $d \left( 0, \xi \right)$, we know that the hypergeometric function is near $1$ and hence behaves as a constant. On the other hand, since both hyperbolic sine and cosine behave as exponentials, using Equation \eqref{LPLInfinityNecessaryEquation}, we see that
				$$\left| \sinh^{\nu_1} d \left( 0, \xi \right) \cosh^{\nu_2} d \left( 0, \xi \right) R_{\alpha}f \left( \xi \right) \right| \leq C \| f \|_{L^p_{\mu_1, \mu_2} \left( \mathbb{H}^n \right)},$$
				whenever $d \left( 0, \xi \right)$ is large. On the other hand, when $d \left( 0, \xi \right)$ is near $0$, the behaviour of the hypergeometric function depends on the values of $\mu_1$ and $p$. Also, in this case, the hyperbolic cosine behaves as a constant and gives no input to the estimates. We have the following cases.
				
				\textbf{Case I(A):} If $p > \frac{\mu_1 + n}{\alpha}$, we see from Equation 15.4.1 of \cite{Olver}, that the hypergeometric function is bounded. Then, Inequality \eqref{LPLInfinityInequality} is obtained by using the fact that $\nu_1 \geq 0$.
				
				\textbf{Case I(B):} If $p = \frac{\mu_1 + n}{\alpha}$, we see from Equation 15.4.2 of \cite{Olver} that the hypergeometric function behaves as $\left| \ln \tanh d \left( 0, \xi \right) \right|$. However, in this case, we have assumed further that $\nu_1 > 0$ and therefore we get Inequality \eqref{LPLInfinityInequality}.
				
				\textbf{Case I(C):} If $p < \frac{\mu_1 + n}{\alpha}$, from Equation 15.4.4 of \cite{Olver} we conclude that the hypergeometric function behaves as $\tanh^{p' \left( \alpha - \frac{\mu_1 + n}{p} \right)} \lambda$. Here, we use the fact that $\nu_1 \geq \frac{\mu_1 + n}{p} - \alpha$ to get the desired result.
				
				This completes the proof when $p > 1$.
				
				\textbf{Case II:} Now, we take up the case when $p = 1$. Here, we write Equation \eqref{RepresentationLInfinityNormRAlpha} as
				\begin{align*}
					&\left| \sinh^{\nu_1} d \left( 0, \xi \right) \cosh^{\nu_2} d \left( 0, \xi \right) R_{\alpha}f \left( \xi \right) \right| \\
                    &\leq C \sinh^{\nu_1} d \left( 0, \xi \right) \cosh^{\nu_2 - \alpha + 1} d \left( 0, \xi \right) \times \\
					&\int\limits_{\cosh d \left( 0, \xi \right)}^{\infty} \left| \tilde{f} \left( t \right) \right| \left( t^2 - 1 \right)^{\frac{\mu_1 + n}{2} - 1} t^{\mu_2} \frac{t^{\alpha - \mu_1 - \mu_2 - n}}{\left( 1 - \frac{1}{t^2} \right)^{\frac{\mu_1 + n}{2} - 1}} \mathrm{d}t. 
				\end{align*}
				Since $\mu_1 + \mu_2 > \alpha - n$, we get,
				\begin{align*}
					&\left| \sinh^{\nu_1} d \left( 0, \xi \right) \cosh^{\nu_2} d \left( 0, \xi \right) R_{\alpha}f \left( \xi \right) \right| \\
                    &\leq C \sinh^{\nu_1} d \left( 0, \xi \right) \cosh^{\nu_2 - \mu_1 - \mu_2 - n + 1} d \left( 0, \xi \right)  \times \\
					&\int\limits_{\cosh d \left( 0, \xi \right)}^{\infty} \left| \tilde{f} \left( t \right) \right| \left( t^2 - 1 \right)^{\frac{\mu_1 + n}{2} - 1} t^{\mu_2} \left( 1 - \frac{1}{t^2} \right)^{1 - \frac{\mu_1 + n}{2}} \mathrm{d}t.
				\end{align*}
				We again split the proof into two parts.
				
				\textbf{Case II(A):} First, let us consider $d \left( 0, \xi \right) \geq 1$. Then, we observe that $\tanh^2 1 \leq 1 - \frac{1}{t^2} \leq 1$, for $t > \cosh d \left( 0, \xi \right)$. Also, both hyperbolic sine and cosine behave as exponentials. Therefore, by using the fact that $\nu_1 + \nu_2 \leq \mu_1 + \mu_2 + n - 1$, as given in Equation \eqref{LPLInfinityNecessaryEquation}, we get
				\begin{align*}
					&\left| \sinh^{\nu_1} d \left( 0, \xi \right) \cosh^{\nu_2} d \left( 0, \xi \right) R_{\alpha}f \left( \xi \right) \right| \\
                    &\leq C \int\limits_{\cosh d \left( 0, \xi \right)}^{\infty} \left| \tilde{f} \left( t \right) \right| \left( t^2 - 1 \right)^{\frac{\mu_1 + n}{2} - 1} t^{\mu_2} \mathrm{d}t = C \| f \|_{L^p_{\mu_1, \mu_2} \left( \mathbb{H}^n \right)}.
				\end{align*}
				
				\textbf{Case II(B):} Next, let us consider $0 < d \left( 0, \xi \right) < 1$. In this case, we have,
				\begin{align*}
					&\left| \sinh^{\nu_1} d \left( 0, \xi \right) \cosh^{\nu_2} d \left( 0, \xi \right) R_{\alpha}f \left( \xi \right) \right| \\
                    &\leq C \sinh^{\nu_1} d \left( 0, \xi \right) \cosh^{\nu_2 - \alpha + 1} d \left( 0, \xi \right) \times \\
					&\int\limits_{\cosh d \left( 0, \xi \right)}^{\cosh 1} \left| \tilde{f} \left( t \right) \right| \left( t^2 - 1 \right)^{\frac{\mu_1 + n}{2} - 1} t^{\mu_2} \frac{t^{\alpha - \mu_1 - \mu_2 - n}}{\left( 1 - \frac{1}{t^2} \right)^{\frac{\mu_1 + n}{2} - 1}} \mathrm{d}t \\
					&+ C \sinh^{\nu_1} d \left( 0, \xi \right) \cosh^{\nu_2 - \alpha + 1} d \left( 0, \xi \right) \times \\
					&\int\limits_{\cosh 1}^{\infty} \left| \tilde{f} \left( t \right) \right| \left( t^2 - 1 \right)^{\frac{\mu_1 + n}{2} - 1} t^{\mu_2} \frac{t^{\alpha - \mu_1 - \mu_2 - n}}{\left( 1 - \frac{1}{t^2} \right)^{\frac{\mu_1 + n}{2} - 1}} \mathrm{d}t.
				\end{align*}
				In the second term above, we observe that since $d \left( 0, \xi \right)$ is small and $\nu_1 \geq 0$ due to Equation \eqref{LPLInfinityNecessaryEquation}, the term can be bounded above by $\| f \|_{L^p_{\mu_1, \mu_2} \left( \mathbb{H}^n \right)}$ by using the technique mentioned in Case II(A). We now show that the first term can also be estimated by $\| f \|_{L^p_{\mu_1, \mu_2} \left( \mathbb{H}^n \right)}$. We rewrite the integral as
                \begin{align*}
                    &\int\limits_{\cosh d \left( 0, \xi \right)}^{\infty} \left| \tilde{f} \left( t \right) \right| \left( t^2 - \cosh^2 d \left( 0, \xi \right) \right)^{\frac{\alpha}{2} - 1} \mathrm{d}t \\
                    &= \int\limits_{\cosh d \left( 0, \xi \right)}^{\infty} \left| \tilde{f} \left( t \right) \right| \left( t^2 - 1 \right)^{\frac{\alpha}{2} - 1} \left( 1 - \frac{\sinh^2 d \left( 0, \xi \right)}{t^2 - 1} \right)^{\frac{\alpha}{2} - 1} \mathrm{d}t.
                \end{align*}
				Since we have assumed that $\alpha \geq 2$, we see that $\left( 1 - \frac{\sinh^2 d \left( 0, \xi \right)}{t^2 - 1} \right)^{\frac{\alpha}{2} - 1} \leq 1$, whenever $\cosh d \left( 0, \xi \right) < t < + \infty$. Therefore, we get
                \begin{align*}
                    &\int\limits_{\cosh d \left( 0, \xi \right)}^{1} \left| \tilde{f} \left( t \right) \right| \left( t^2 - 1 \right)^{\frac{\mu_1 + n}{2} - 1} t^{\mu_2} \frac{t^{\alpha - \mu_1 - \mu_2 - n}}{\left( 1 - \frac{1}{t^2} \right)^{\frac{\mu_1 + n}{2} - 1}} \mathrm{d}t \\
                    &\leq C \int\limits_{\cosh d \left( 0, \xi \right)}^{\cosh 1} \left| \tilde{f} \left( t \right) \right| \left( t^2 - 1 \right)^{\frac{\mu_1 + n}{2} - 1} t^{\mu_2} t^{- \mu_2} \left( t^2 - 1 \right)^{\frac{\alpha}{2} - \frac{\mu_1 + n}{2}} \mathrm{d}t.
                \end{align*}
				Since $1 \leq \cosh d \left( 0, \xi \right) \leq t \leq \cosh 1$, we have $t^{-\mu_2} \leq \max \left\lbrace 1, \cosh^{- \mu_2} 1 \right\rbrace$. Now, we have the following estimates
				\begin{align*}
					&\sinh^{\nu_1} d \left( 0, \xi \right) \cosh^{\nu_2 - \alpha + 1} d \left( 0, \xi \right) \times \\
                    &\int\limits_{\cosh d \left( 0, \xi \right)}^{\cosh 1} \left| \tilde{f} \left( t \right) \right| \left( t^2 - 1 \right)^{\frac{\mu_1 + n}{2} - 1} t^{\mu_2} \frac{t^{\alpha - \mu_1 - \mu_2 - n}}{\left( 1 - \frac{1}{t^2} \right)^{\frac{\mu_1 + n}{2} - 1}} \mathrm{d}t \\
					&\leq C \| f \|_{L^1_{\mu_1, \mu_2} \left( \mathbb{H}^n \right)} \begin{cases}
	\sinh^{\nu_1 - \mu_1 - n + \alpha} d \left( 0, \xi \right), & \mu_1 + n < \alpha. \\
	\sinh^{\nu_1} d \left( 0, \xi \right), & \mu_1 + n \geq \alpha.
\end{cases}
				\end{align*}
				In either case, the power of hyperbolic sine can be bounded above by a constant owing to Equation \eqref{LPLInfinityNecessaryEquation}. This completes the proof of Part \ref{LPLInfinityAlphaMoreThan2Proof}!
				\item[\mylabel{LPLInfinityAlphaLessThan2Proof}{(B)}] The proof of Part \ref{LPLInfinityAlphaLessThan2Proof} is verbatim to that of Case I of Part \ref{LPLInfinityAlphaMoreThan2Proof}, where the fact that $p > \frac{2}{\alpha}$ is used to get the finiteness of the hypergeometric integral in Equation \eqref{HypergeometricIntegralLPLInfinityProof}. The necessity of the condition $p > \frac{2}{\alpha}$ is shown in Example \ref{CoutnerExampleLPLInfinityAnuulus} and Remark \ref{NonboundednessRemarkRLFI}.
			\end{enumerate}
		\end{proof}
		We now see that the condition $p > \frac{2}{\alpha}$ mentioned in Part \ref{LPLInfinityAlphaLessThan2} of Theorem \ref{LPLInfinitySufficient} is necessary. First, we see through an example that $p \geq \frac{2}{\alpha}$ is required.
		\begin{example}
			\label{CoutnerExampleLPLInfinityAnuulus}
			\normalfont
			Fix $a > 1$ and define for $b > a$, the function $f_b: \mathbb{H}^n \rightarrow \mathbb{C}$ as $f_b \left( x \right) \chi_{\left( a, b \right)} \left( \cosh d \left( 0, x \right) \right) \cosh d \left( 0, x \right)$. Then, using Equation \eqref{FormualRadialFunctionsEquation}, it is easy to see that
			\begin{equation}
				\label{WeightedAnnulusRAlphaComputation}
                \begin{aligned}
                    R_{\alpha}f \left( \xi \right) = &\frac{C}{\cosh^{\alpha - 1} d \left( 0, \xi \right)} \times \\
                    &\left[ \left( b^2 - \cosh^2 d \left( 0, \xi \right) \right)^{\frac{\alpha}{2}} - \left( \max \left\lbrace a^2, \cosh^2 d \left( 0, \xi \right) \right\rbrace - \cosh^2 d \left( 0, \xi \right) \right)^{\frac{\alpha}{2}} \right].
                \end{aligned}
			\end{equation}
			By choosing $\xi \in \Xi_k \left( \mathbb{H}^n \right)$ with $\cosh d \left( 0, \xi \right) = a$, we observe that
			$$\| \sinh^{\nu_1} d \left( 0, \cdot \right) \cosh^{\nu_2} d \left( 0, \cdot \right) R_{\alpha}f_b \|_{L^{\infty} \left( \Xi_k \left( \mathbb{H}^n \right) \right)} \geq C \left( a^2 - 1 \right)^{\nu_1} a^{\nu_2 - \alpha + 1} \left( b^2 - a^2 \right)^{\frac{\alpha}{2}}.$$
			Since for every $b > a$, we have $f_b \in L^p \left( \mathbb{H}^n \right)$ for any $p \geq 1$, Inequality \eqref{LPLInfinityInequality} implies that the following holds for all $b > a$.
			$$\left( b^2 - a^2 \right)^{\frac{p \alpha}{2}} \leq C \int\limits_{a}^{b} \left( t^2 - 1 \right)^{\frac{\mu_1 + n}{2} - 1} t^{\mu_2} \mathrm{d}t.$$
			However, using the L'H\^{o}spital rule, it is easy to see that the inequality fails when $p < \frac{2}{\alpha}$. That is, for Inequality \eqref{LPLInfinityInequality} to hold, we require $p \geq \frac{2}{\alpha}$.
		\end{example}
		\begin{remark}
			\label{NonboundednessRemarkRLFI}
			\normalfont
			To observe that Inequality \eqref{LPLInfinityInequality} does not hold for $p = \frac{2}{\alpha}$, we resort to the theory of fractional integrals. The idea of the proof lies in the work of Hardy and Littlewood (see p. 578 \cite{HardyLittlewoodFI}). Assuming Inequality \eqref{LPLInfinityInequality} for radial functions, one can deduce a $L^{\frac{2}{\alpha}}$-$L^{\infty}$ estimate for the Riemann-Liouville fractional integral $I^{\frac{\alpha}{2}}_-$. However, it is easily seen using the arguments of Example C.3 of \cite{SelfLPImproving} that such an inequality is not possible. Particularly, the function of Example C.3 in \cite{SelfLPImproving} can be used in this case as well by considering $1 < \gamma \leq 2 - \frac{\alpha}{2}$.
		\end{remark}
		Theorems \ref{LPLInfinityNecessary} and \ref{LPLInfinitySufficient} together with Example \ref{CoutnerExampleLPLInfinityAnuulus} and Remark \ref{NonboundednessRemarkRLFI} almost completely answer Question \ref{LPLInfinityQuestion}. Let us observe that in Theorem \ref{LPLInfinitySufficient}, we have added another assumption that $\nu_1 > 0$ when $p = \frac{\mu_1 + n}{\alpha}$, which may not be necessary. The techniques to tackle the case $\nu_1 = 0$ and $p = \frac{\mu_1 + n}{\alpha}$ elude us. Nonetheless, this case is handled for a smaller class of Lorentz functions ($L^{p, 1}_{\mu_1, \mu_2} \left( \mathbb{H}^n \right)$) in Section \ref{EndPointEstimateSection}.
	\section{End-Point Estimates for radial functions}
		\label{EndPointEstimateSection}
		The purpose of this section is twofold. First, we recall that in answering Question \ref{LPLInfinityQuestion} we encounter a ``gap" in the proof, and the case $\nu_1 = 0$ with $p = \frac{\mu_1 + n}{\alpha}$ is yet to be resolved. We now ask a weaker question for these values of $\nu_1$ and $p$.
		\begin{question}
			\label{EndPointQuestion1}
			Given $\alpha \geq 1$, $\mu_1 + \mu_2 > \alpha - n$ and $p = \frac{\mu_1 + n}{\alpha}$, what are the admissible values of $\nu \in \mathbb{R}$, so that the following inequality holds for all radial functions $f \in L^{p, 1}_{\mu_1, \mu_2} \left( \mathbb{H}^n \right)$?
			\begin{equation}
				\label{EndPointDesiredInequality1}
				\| \cosh^{\nu} d \left( 0, \cdot \right) R_{\alpha}f \left( \cdot \right) \|_{L^{\infty} \left( \Xi_k \left( \mathbb{H}^n \right) \right)} \leq C \| f \|_{L^{p, 1} \left( \mathbb{H}^n \right)}.
			\end{equation}
			Here, the constant $C > 0$ depends only on $\nu, \mu_1, \mu_2, \alpha$ and $p$, and is independent of the choice of $f$.
		\end{question}
		Next, we observe that the value $p = \frac{\mu_1 + \mu_2 + n - 1}{\alpha - 1}$ is an ``end-point" of the existence of the transform $R_{\alpha}$ on the weighted Lebesgue spaces. Hence, it is natural to ask the behaviour of the said transform at this end-point. However, it would not be wise to ask the question about mapping properties on the Lebesgue space but rather, we must restrict ourselves to a subspace of the Lebesgue space. We, therefore, ask the following question.
		\begin{question}
			\label{EndPointQuestion2}
			Given $\alpha > 1$, $\mu_1 + \mu_2 > \alpha - n$ and $p = \frac{\mu_1 + \mu_2 + n - 1}{\alpha - 1}$, what are the admissible values of $\nu_1, \nu_2 \in \mathbb{R}$ such that the following inequality holds for all radial functions in $f \in L^{p, 1}_{\mu_1, \mu_2} \left( \mathbb{H}^n \right)$?
			\begin{equation}
				\label{EndPointDesiredInequality2}
				\| \sinh^{\nu_1} d \left( 0, \cdot \right) \cosh^{\nu_2} d \left( 0, \cdot \right) R_{\alpha}f \left( \cdot \right) \|_{L^{\infty} \left( \Xi_k \left( \mathbb{H}^n \right) \right)} \leq C \| f \|_{L^{p, 1}_{\mu_1, \mu_2} \left( \mathbb{H}^n \right)}.
			\end{equation}
			Here, the constant $C > 0$ depends only on $\nu_1, \nu_2, \mu_1, \mu_2, \alpha$ and $p$, and is independent of the choice of $f$.
		\end{question}
		\begin{remark}
			\normalfont
			We remark that the case $\alpha = 1$ is excluded from the study since for $q < +\infty$, we have $L^{\infty, q} \left( X \right) = \left\lbrace 0 \right\rbrace$, for any measure space $X$.
		\end{remark}
		A question about end-point behaviour (similar to Question \ref{EndPointQuestion2}) about the Radon transform in the Euclidean space was answered by Oberlin and Stein in \cite{OberlinStein} (see p. 42). For the $k$-plane transform on the Euclidean space, this question was answered by Duoandikoetxea et al. in Lemma 5 of \cite{Duoandikoetxea} and was then studied in the hyperbolic case by Kumar and Ray in \cite{KumarRayEndPoint} (Theorem 3.8). Later, such results were obtained for the $k$-plane transform on weighted Lorentz spaces by the authors of \cite{SelfLPImproving} (see Section 5.1 of their article). Throughout this section, we shall assume that the functions under considerations are characteristic functions of measurable subsets of $\mathbb{H}^n$, since proving the desired inequalities for such functions give the inequalities for generic Lorentz functions (see Theorem 3.13 in \cite{SteinWeissFA}). Also, we shall use the fact that if $f = \chi_A$, for a measurable set $A$, we have,
		$$\| f \|_{L^{p, 1}_{\mu_1, \mu_2} \left( \mathbb{H}^n \right)} = \| f \|_{L^p_{\mu_1, \mu_2} \left( \mathbb{H}^n \right)} = \left( \int\limits_{A} \sinh^{\mu_1} d \left( 0, x \right) \cosh^{\mu_2} d \left( 0, x \right) \mathrm{d}x \right)^{\frac{1}{p}}.$$
		
		The following lemmata from \cite{KumarRayEndPoint} and \cite{SelfLPImproving} are crucial to the proofs of this section.
		\begin{lemma}[\cite{KumarRayEndPoint}]
			\label{MainLemmaEuclidean}
			Let $f$ be a characteristic function of a measurable subset of $\left( 0, \infty \right)$ with finite measure. Then, for $\gamma > 0$ and $p \geq 1$, we have,
			\begin{equation}
				\label{MainLemmaEuclideanEquation}
				\int\limits_{0}^{\infty} f \left( t \right) t^{\gamma - 1} \mathrm{d}t \leq C \left( \int\limits_{0}^{\infty} f \left( t \right) t^{p \gamma - 1} \mathrm{d}t \right)^{\frac{1}{p}},
			\end{equation}
			where, the constant $C$ only depends on $p$ and $\gamma$.
			
			Similarly, for characteristic functions of measurable subsets of $\mathbb{R}$ with finite measure, we have
			\begin{equation}
				\label{MainLemmaEuclideanEquation2}
				\int\limits_{\mathbb{R}} f \left( t \right) e^{\gamma t} \mathrm{d}t \leq C \left( \int\limits_{\mathbb{R}} f \left( t \right) e^{p\gamma t} \mathrm{d}t \right)^{\frac{1}{p}},
			\end{equation}
			where, $\gamma > 0$ and $C$ is a constant that depends only on $\gamma$ and $p$.
		\end{lemma}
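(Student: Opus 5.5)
Since $f$ is a characteristic function of a set $E$ with finite measure, I would use a layer-cake (distribution function) decomposition. For the first inequality, set $\lambda(s) = |E \cap (s, \infty)|$ for $s \ge 0$; this is nonincreasing, satisfies $0 \le \lambda \le |E| < \infty$, and has derivative $-\chi_E$ almost everywhere. The key observation is that among all sets of a given "mass profile", the weighted integrals are extremized by pushing mass to one side. Concretely, I would compare $E$ with an interval. Let $m = |E|$. Since $t^{\gamma-1}$ and $t^{p\gamma -1}$ are monotone, a rearrangement argument alone does not obviously give the inequality. So instead I would argue directly by the change of variables $u = t^{\gamma}$ in both integrals.

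Under $u = t^\gamma$, we have $t^{\gamma-1}\,dt = \gamma^{-1} du$ and $t^{p\gamma -1}\,dt = \gamma^{-1} u^{p-1} du$. Writing $F = \chi_{\tilde E}$, where $\tilde E = \{t^\gamma : t \in E\}$ (still measurable, with image measure), the inequality reduces to
$$\int_0^\infty F(u)\,du \le C \left( \int_0^\infty F(u)\, u^{p-1}\, du \right)^{1/p}.$$
Let $M = |\tilde E| = \int F$. If $M = \infty$, then the right side is also infinite, since $u^{p-1} \ge 1$ on $(1,\infty)$ and the part of $\tilde E$ in $(0,1)$ has measure at most $1$. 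So assume $M < \infty$. Since $u^{p-1}$ is nondecreasing for $p \ge 1$, the integral $\int F u^{p-1}$ over sets of measure $M$ is minimized by $\tilde E = (0, M)$ (bathtub principle: move mass toward the origin). Hence
$$\int F u^{p-1} \ge \int_0^M u^{p-1}\, du = \frac{M^p}{p},$$
which gives $M \le p^{1/p} \left( \int F u^{p-1} \right)^{1/p}$. This is \eqref{MainLemmaEuclideanEquation} with an explicit $C$ depending on $p$ and $\gamma$.

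For \eqref{MainLemmaEuclideanEquation2}, I would substitute $u = e^{t}$, which maps $\mathbb{R}$ to $(0,\infty)$. Then $e^{\gamma t}\,dt = u^{\gamma-1}\,du$ and $e^{p\gamma t}\,dt = u^{p\gamma - 1}\,du$, and the characteristic function of a set in $\mathbb{R}$ becomes a characteristic function of a measurable set in $(0,\infty)$. The claim then follows from the first part. The only point to verify is finiteness, which is not actually required: if the right side is finite, the rearrangement bound already gives finiteness of the left side. So the finite-measure hypothesis on the original set is harmless.

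The main (mild) obstacle is justifying the bathtub step rigorously. I would do this by writing
$$\int F u^{p-1} - \int_0^M u^{p-1} = \int_{\tilde E \setminus (0,M)} u^{p-1} - \int_{(0,M)\setminus \tilde E} u^{p-1}.$$
The two sets have equal measure, and $u^{p-1} \ge M^{p-1}$ on the first set while $u^{p-1} \le M^{p-1}$ on the second. Hence the difference is nonnegative.
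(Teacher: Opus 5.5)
The paper does not prove this lemma; it is quoted from \cite{KumarRayEndPoint}, so there is no argument in the text to compare against. Your proof is correct and self-contained.

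After $u=t^{\gamma}$ the claim becomes $|\tilde E|\le C\bigl(\int_{\tilde E}u^{p-1}\,\mathrm{d}u\bigr)^{1/p}$. Your bathtub comparison with $(0,M)$ is rigorous. The two difference sets have equal finite measure, and since $p\ge 1$ the weight $u^{p-1}$ is at least $M^{p-1}$ on $\tilde E\setminus(0,M)$ and at most $M^{p-1}$ on $(0,M)\setminus\tilde E$.

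Undoing the substitution gives the explicit constant $C=p^{1/p}\gamma^{1/p-1}$. This constant is sharp, since $E=(0,a)$ gives equality.

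Your separate treatment of $M=\infty$ shows that the finite-measure hypothesis is never used. That is exactly what makes your reduction of \eqref{MainLemmaEuclideanEquation2} to \eqref{MainLemmaEuclideanEquation} via $u=e^{t}$ legitimate. Even when $|E|<\infty$, the set $\exp(E)$ can have infinite Lebesgue measure, and $\int_E e^{\gamma t}\,\mathrm{d}t$ can be $+\infty$; in that case both sides are infinite. Alternatively, $u=e^{\gamma t}$ lands you directly in the same bathtub inequality.

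One editorial point: drop the opening remarks about $\lambda(s)$ and about a rearrangement argument ``not obviously'' working. They are never used, and the bathtub step you then rely on is itself a rearrangement argument.
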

		\begin{lemma}[\cite{SelfLPImproving}]
			\label{MainLemma}
			Let $f$ be a characteristic function of a measurable subset of $\left( 0, \infty \right)$ with finite measure. Then, for $\gamma > 0$, $\alpha \in \mathbb{R}$ and $p \geq \max \left\lbrace 1, 1 - \alpha \right\rbrace$, we have,
			\begin{equation}
				\label{MainLemmaEquation}
				\int\limits_{0}^{\infty} f \left( t \right) \sinh^{\gamma} t \ \mathrm{d}t \leq C \left( \int\limits_{0}^{\infty} f \left( t \right) \sinh^{p \gamma - \alpha} t \cosh^{\alpha}t \ \mathrm{d}t \right)^{\frac{1}{p}},
			\end{equation}
			where, the constant $C$ depends only on $p$, $\gamma$ and $\alpha$.
		\end{lemma}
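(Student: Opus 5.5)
The plan is to split the set on which $f$ is supported into a part near the origin and a part near infinity. On each part, $\sinh$ and $\cosh$ are comparable to simple functions (powers of $t$, respectively exponentials), which reduces both pieces to Lemma \ref{MainLemmaEuclidean}. Write $f = \chi_E$ with $E \subseteq (0,\infty)$ of finite measure, and set $E_0 = E \cap (0,1)$ and $E_\infty = E \cap [1,\infty)$. Since the right-hand integrand of \eqref{MainLemmaEquation} is nonnegative, restricting it to $E_0$ or to $E_\infty$ only decreases it. Hence it suffices to bound $\int_{E_0} \sinh^\gamma t\,\mathrm{d}t$ and $\int_{E_\infty}\sinh^\gamma t\,\mathrm{d}t$ separately by $C\big(\int_{E_j}\sinh^{p\gamma-\alpha}t\cosh^\alpha t\,\mathrm{d}t\big)^{1/p}$ and then add the two bounds.

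\textbf{Near infinity.} On $[1,\infty)$ we have $c\,e^t \le \sinh t \le \cosh t \le e^t$ for an absolute $c>0$. So the left-hand side is at most $\int \chi_{E_\infty}(t) e^{\gamma t}\,\mathrm{d}t$, and the integrand on the right is at least $c' e^{(p\gamma-\alpha)t}e^{\alpha t} = c' e^{p\gamma t}$. Now \eqref{MainLemmaEuclideanEquation2} with exponent $\gamma>0$ applies directly.

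\textbf{Near the origin.} On $(0,1)$ we have $\sinh t \asymp t$ and $\cosh t \asymp 1$. So the left side is $\lesssim \int_{E_0} t^{\gamma}\,\mathrm{d}t$ and the right integrand is $\gtrsim t^{p\gamma-\alpha}$. I would apply \eqref{MainLemmaEuclideanEquation} with an auxiliary exponent $\gamma'$, which needs three things:
\begin{enumerate}
\item $t^{\gamma} \le t^{\gamma'-1}$ on $(0,1)$, i.e.\ $\gamma' \le \gamma+1$;
\item $t^{p\gamma'-1} \le t^{p\gamma-\alpha}$ on $(0,1)$, i.e.\ $\gamma' \ge \gamma + \frac{1-\alpha}{p}$;
\item $\gamma'>0$.
\end{enumerate}
The interval for $\gamma'$ is nonempty exactly when $\frac{1-\alpha}{p} \le 1$, i.e.\ $p \ge 1-\alpha$, which is the hypothesis. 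I take $\gamma' = \max\{\gamma + \frac{1-\alpha}{p},\, \epsilon\}$ for a small $\epsilon\in(0,\gamma+1]$; this choice satisfies all three conditions. Lemma \ref{MainLemmaEuclidean} then gives $\int_{E_0} t^{\gamma'-1} \le C(\int_{E_0} t^{p\gamma'-1})^{1/p}$, and chaining the comparisons finishes this piece.

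The only delicate point I expect is this exponent bookkeeping near $0$. It is precisely where the hypothesis $p \ge \max\{1,1-\alpha\}$ enters, with $p\ge 1$ needed for Lemma \ref{MainLemmaEuclidean} itself. One must check that the monotone comparisons between powers of $t$ go in the right direction on $(0,1)$, and allow for the case $\alpha>1$, where $\gamma+\frac{1-\alpha}{p}$ may be nonpositive and the cutoff $\epsilon$ is needed. All constants depend only on $p,\gamma,\alpha$.
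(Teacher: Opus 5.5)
Your proof is correct. The paper quotes this lemma from \cite{SelfLPImproving} without proof, but your argument is the same split-at-$t=1$ reduction to Lemma \ref{MainLemmaEuclidean} that the paper uses for Lemma \ref{MainLemma2}, and all your power comparisons on $(0,1)$ and $[1,\infty)$ go in the right direction. The cutoff $\epsilon$ is unnecessary: $\gamma'=\gamma+1$ already satisfies your three conditions (the first with equality, the second being exactly $p\ge 1-\alpha$, the third because $\gamma>0$).
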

		Apart from the two lemmata mentioned above, we also require the following estimates for characteristic functions of measurable subsets of $\left( 1, \infty \right)$. Again, in proving these estimates, we find that Lemma \ref{MainLemmaEuclidean} is crucial.
		\begin{lemma}
			\label{MainLemma2}
			Let $\gamma_1 > -1$ and $\gamma_2 \in \mathbb{R}$ be such that $\gamma_1 + \gamma_2 > 0$. Then, for $p \geq 1$ and for characteristic functions $f$ of measurable subsets of $\left( 1, \infty \right)$ with finite measure, we have,
			\begin{equation}
				\label{MainLemma2Equation1}
				\int\limits_{1}^{\infty} f \left( t \right) \left( t - 1 \right)^{\frac{\gamma_1 - 1}{2}} t^{\frac{\gamma_2 - 1}{2}} \mathrm{d}t \leq C \left( \int\limits_{1}^{\infty} f \left( t \right) \left( t - 1 \right)^{\frac{p \left( \gamma_1 + 1 \right)}{2} - 1} t^{\frac{p \gamma_2 - 1}{2}} \mathrm{d}t \right)^{\frac{1}{p}},
			\end{equation}
			where, the constant $C$ depends only on $p$, $\gamma_1$ and $\gamma_2$. Further, we also have,
			\begin{equation}
				\label{MainLemma2Equation2}
				\int\limits_{1}^{\infty} f \left( t \right) \left( t - 1 \right)^{\frac{\gamma_1 - 1}{2}} t^{\frac{\gamma_2 - 1}{2}} \mathrm{d}t \leq C \left( \int\limits_{1}^{\infty} f \left( t \right) \left( t - 1 \right)^{\frac{p \gamma_1 - 1}{2}} t^{\frac{p \gamma_2 - 1}{2}} \mathrm{d}t \right)^{\frac{1}{p}},
			\end{equation}
			where, the constant $C$ depends only on the choice of $p$, $\gamma_1$ and $\gamma_2$.
		\end{lemma}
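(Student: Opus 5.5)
The plan is to reduce both inequalities to the first estimate \eqref{MainLemmaEuclideanEquation} of Lemma \ref{MainLemmaEuclidean}, by separating the behaviour near $t = 1$ from the behaviour near $t = \infty$. Write $f = f_1 + f_2$ with $f_1 = f \chi_{\left( 1, 2 \right)}$ and $f_2 = f \chi_{\left[ 2, \infty \right)}$. Both are characteristic functions of sets of finite measure. The left side of \eqref{MainLemma2Equation1} (resp. \eqref{MainLemma2Equation2}) splits additively over $f_1$ and $f_2$. Since the integrands on the right are non-negative, the right-side integral for $f_j$ is at most the one for $f$. It therefore suffices to prove each inequality separately for $f_1$ and for $f_2$, each with the full right side of that inequality.

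\textbf{Near $t = 1$.} On $\left( 1, 2 \right)$ we have $1 \leq t \leq 2$, so every power of $t$ is comparable to a constant. Substituting $s = t - 1$, the function $g(s) = f_1(1+s)$ is a characteristic function of a subset of $\left( 0, 1 \right)$. Set $\beta = \frac{\gamma_1 + 1}{2}$, which is positive because $\gamma_1 > -1$. Then the left side for $f_1$ is at most $C \int_0^\infty g(s) s^{\beta - 1} \mathrm{d}s$. Estimate \eqref{MainLemmaEuclideanEquation} with $\gamma = \beta$ bounds this by $C \left( \int_0^\infty g(s) s^{p\beta - 1} \mathrm{d}s \right)^{1/p}$. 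For \eqref{MainLemma2Equation1} the exponent $p\beta - 1 = \frac{p(\gamma_1 + 1)}{2} - 1$ is exactly the one on the right, so this case is done. For \eqref{MainLemma2Equation2} the right-side exponent is $\frac{p\gamma_1 - 1}{2} = p\beta - \frac{p+1}{2}$. Because $p \geq 1$, this is at most $p\beta - 1$, and so $s^{p\beta - 1} \leq s^{\frac{p\gamma_1 - 1}{2}}$ for $0 < s < 1$, which gives the required bound.

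\textbf{Near $t = \infty$.} On $\left[ 2, \infty \right)$ we have $\frac{t}{2} \leq t - 1 \leq t$, so the left-side weight is comparable to $t^{\gamma - 1}$ with $\gamma = \frac{\gamma_1 + \gamma_2}{2} > 0$. Estimate \eqref{MainLemmaEuclideanEquation} applied to $f_2$ (supported in $\left[ 2, \infty \right)$) bounds the left side by $C \left( \int f_2(t)\, t^{p\gamma - 1} \mathrm{d}t \right)^{1/p}$. It remains to compare $t^{p\gamma - 1}$ with the right-side weights for $t \geq 2$. For \eqref{MainLemma2Equation2} the right weight is comparable to $t^{\frac{p\gamma_1 - 1}{2} + \frac{p\gamma_2 - 1}{2}} = t^{p\gamma - 1}$, which is exactly what is needed. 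For \eqref{MainLemma2Equation1} the right weight is comparable to $t^{p\gamma + \frac{p}{2} - \frac{3}{2}}$. Since $p \geq 1$, this exponent is at least $p\gamma - 1$, so $t^{p\gamma - 1} \leq t^{p\gamma + \frac{p}{2} - \frac{3}{2}}$ for $t \geq 2$.

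Adding the two pieces proves both inequalities, with constants depending only on $p$, $\gamma_1$ and $\gamma_2$. No step is deep. The only point needing care is the exponent comparisons, where one has to check which direction of monotonicity of the powers is needed on $\left( 0, 1 \right)$ and on $\left[ 2, \infty \right)$. It is exactly there that $p \geq 1$ enters, while $\gamma_1 > -1$ and $\gamma_1 + \gamma_2 > 0$ guarantee positivity of the two $\gamma$'s used in Lemma \ref{MainLemmaEuclidean}.
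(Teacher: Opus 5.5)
Your proof is correct and is essentially the paper's argument: you separate the region near the singular endpoint from the region near infinity, apply Lemma \ref{MainLemmaEuclidean} on each piece, and use $p \geq 1$ for the exponent comparisons in the same two places as the paper (near the endpoint for \eqref{MainLemma2Equation2}, near infinity for \eqref{MainLemma2Equation1}). The difference is only cosmetic: the paper first substitutes $t = \cosh^2 u$ and uses the exponential form \eqref{MainLemmaEuclideanEquation2} at infinity, whereas you stay in the original variable and use the power form \eqref{MainLemmaEuclideanEquation} on both pieces, which also spares you checking that the substitution preserves finite-measure characteristic functions.
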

		\begin{proof}
			With the given conditions on $\gamma_1$ and $\gamma_2$, let us consider the following, for characteristic functions of measurable subsets of $\left( 0, \infty \right)$.
			\begin{align*}
				\int\limits_{0}^{\infty} f \left( t \right) \sinh^{\gamma_1} t \cosh^{\gamma_2} t \ \mathrm{d}t &= \int\limits_{0}^{1} f \left( t \right) \sinh^{\gamma_1} t \cosh^{\gamma_2} t \ \mathrm{d}t \\
                &+ \int\limits_{1}^{\infty} f \left( t \right) \sinh^{\gamma_1} t \cosh^{\gamma_2} t \ \mathrm{d}t.
			\end{align*}
			Since for $0 \leq t \leq 1$, $\sinh t$ behaves as $t$ and $\cosh t$ behaves as a constant, and for $t > 1$, both $\sinh t$ and $\cosh t$ behave as $e^t$, we get
			\begin{align*}
				\int\limits_{0}^{\infty} f \left( t \right) \sinh^{\gamma_1} t \cosh^{\gamma_2} t \ \mathrm{d}t \leq C \left[ \int\limits_{0}^{1} f \left( t \right) t^{\gamma_1} \mathrm{d}t + \int\limits_{0}^{\infty} f \left( t \right) e^{\left( \gamma_1 + \gamma_2 \right) t} \mathrm{d}t \right].
			\end{align*}
			Now, using Equation \eqref{MainLemmaEuclideanEquation} of Lemma \ref{MainLemmaEuclidean} in the first integral and Equation \eqref{MainLemmaEuclideanEquation2} of Lemma \ref{MainLemmaEuclidean} in the second integral, we have,
			\begin{equation}
				\label{MainLemma2Intermediate}
                \begin{aligned}
                    \int\limits_{0}^{\infty} f \left( t \right) \sinh^{\gamma_1} t \cosh^{\gamma_2} t \ \mathrm{d}t \leq C &\left[ \left( \int\limits_{0}^{1} f \left( t \right) t^{p \gamma_1 + p - 1} \mathrm{d}t \right)^{\frac{1}{p}} \right. \\
                    &\left. + \left( \int\limits_{1}^{\infty} f \left( t \right) e^{p \left( \gamma_1 + \gamma_2 \right)t} \mathrm{d}t \right)^{\frac{1}{p}} \right].
                \end{aligned}
			\end{equation}
			Again using the fact that for $t \in \left( 0, 1 \right)$, we may replace $t$ by $\sinh t$ and a constant by $\cosh t$, and for $t > 1$, we may replace exponentials by hyperbolic sine and cosine, and observing that $1 \leq e^{\left( p - 1 \right) t}$, for any $t \geq 0$ and $p \geq 1$, we get
			\begin{align*}
				\int\limits_{0}^{\infty} f \left( t \right) \sinh^{\gamma_1} t \cosh^{\gamma_2} t \ \mathrm{d}t \leq C \left( \int\limits_{0}^{\infty} f \left( t \right) \sinh^{p\gamma_1 + p - 1} t \cosh^{\gamma_2} t \ \mathrm{d}t \right)^{\frac{1}{p}}.
			\end{align*}
			Equation \eqref{MainLemma2Equation1} is readily obtained by replacing $\cosh^2 t$ by $t$. To obtain Equation \eqref{MainLemma2Equation2}, we dominate $t^{p - 1}$ in the first integral of Equation \eqref{MainLemma2Intermediate} by $1$ and proceed and above.
		\end{proof}
		We now begin answering Question \ref{EndPointQuestion1}. We start by getting the necessary conditions for Inequality \eqref{EndPointDesiredInequality1}.
		\begin{theorem}
			\label{EndPoint1Necessary}
			Let $\alpha \geq 1$, $\mu_1 + \mu_2 > \alpha - n$ and $p = \frac{\mu_1 + n}{\alpha}$. Then, Inequality \eqref{EndPointDesiredInequality1} holds only if
			\begin{equation}
				\label{EndPoint1NecessaryEquation}
				\nu \leq \frac{\mu_1 + \mu_2 + n - 1}{p}.
			\end{equation}
		\end{theorem}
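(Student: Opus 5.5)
We test Inequality \eqref{EndPointDesiredInequality1} against characteristic functions of large balls, in the same way as in the proof of Theorem \ref{LPLInfinityNecessary}. For $\lambda > 0$ large, we take $f_{\lambda} = \chi_{B \left( 0, \lambda \right)}$. Since $f_{\lambda}$ is a characteristic function, its Lorentz norm equals its weighted $L^p$ norm:
$$\| f_{\lambda} \|_{L^{p, 1}_{\mu_1, \mu_2} \left( \mathbb{H}^n \right)} = C \left( \int\limits_{0}^{\lambda} \sinh^{\mu_1 + n - 1} t \cosh^{\mu_2} t \ \mathrm{d}t \right)^{\frac{1}{p}}.$$
Since $\mu_1 + \mu_2 + n - 1 > \alpha - 1 \geq 0$, this integral grows like $e^{\left( \mu_1 + \mu_2 + n - 1 \right) \lambda}$ for large $\lambda$. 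Hence
$$\| f_{\lambda} \|_{L^{p,1}_{\mu_1, \mu_2}} \leq C e^{\frac{\mu_1 + \mu_2 + n - 1}{p} \lambda}.$$
If $\mu_1 \leq -n$, we would instead use $\chi_{B \left( 0, \lambda \right) \setminus B \left( 0, 1 \right)}$ to avoid the singularity at the origin; this does not change the large-$\lambda$ growth. Note that $p = \frac{\mu_1 + n}{\alpha} \geq 1$ forces $\mu_1 + n > 0$, so the plain ball suffices.

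Next we compute $R_{\alpha}f_{\lambda}$ using Proposition \ref{FormulaForRadialFunctionsRAlpha} with $\tilde{f} = \chi_{\left( 1, \cosh \lambda \right)}$. For $d \left( 0, \xi \right) = s < \lambda$,
$$R_{\alpha}f_{\lambda} \left( \xi \right) = \frac{C}{\cosh^{\alpha - 1} s} \int\limits_{\cosh s}^{\cosh \lambda} \left( t^2 - \cosh^2 s \right)^{\frac{\alpha}{2} - 1} \mathrm{d}t.$$
We choose $\xi$ with $\cosh d \left( 0, \xi \right) = \frac{1}{2} \cosh \lambda$. On the range $t \in \left( \frac{3}{4} \cosh \lambda, \cosh \lambda \right)$, the quantity $t^2 - \cosh^2 s$ is comparable to $\cosh^2 \lambda$, whatever the sign of $\frac{\alpha}{2} - 1$. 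So the integral is bounded below by $C \cosh^{\alpha - 1} \lambda$, and therefore
$$\cosh^{\nu} d \left( 0, \xi \right) R_{\alpha}f_{\lambda} \left( \xi \right) \geq C \cosh^{\nu} \lambda \geq C e^{\nu \lambda}.$$
The inequality holds on a set of $\xi$ of positive measure, namely a neighbourhood of this distance, so the lower bound passes to the $L^{\infty}$ norm.

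Inserting both estimates into Inequality \eqref{EndPointDesiredInequality1} gives
$$e^{\nu \lambda} \leq C e^{\frac{\mu_1 + \mu_2 + n - 1}{p} \lambda}$$
for all large $\lambda$. Letting $\lambda \to \infty$ yields Equation \eqref{EndPoint1NecessaryEquation}.

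The only delicate point is the lower bound for $R_{\alpha}f_{\lambda}$ when $1 \leq \alpha < 2$, where the kernel is singular. Restricting to the upper portion of the $t$-range, as above, avoids the singularity at $t = \cosh s$. Alternatively, one can evaluate the integral exactly as $\left( \cosh^2 \lambda - \cosh^2 s \right)^{\alpha/2}$ times a bounded-below factor, analogous to Equation \eqref{RAlphaBallComputation}.
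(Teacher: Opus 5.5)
Your argument is correct and is essentially the paper's scaling test. The paper uses the annulus $f_a=\chi_{(a,2a)}(\cosh d(0,x))$ evaluated at $\cosh d(0,\xi)=a$, where $R_{\alpha}f_a(\xi)$ is exactly a positive constant and $\|f_a\|_{L^{p,1}_{\mu_1,\mu_2}}\lesssim a^{\frac{\mu_1+\mu_2+n-1}{p}}$; your ball $B(0,\lambda)$ needs the checks $\mu_1+n>0$ and $\mu_1+\mu_2+n-1>0$, which you make correctly, and then gives the same comparison with $a\sim e^{\lambda}$, while your positive-measure remark fills a detail the paper skips (only your closing aside is slightly off, since the factor multiplying $(\cosh^2\lambda-\cosh^2 s)^{\alpha/2}$ is of size $1/\cosh\lambda$ rather than bounded below, but the main argument does not use it).
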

		\begin{proof}
			For a fixed $a > 1$, let us consider the function $f_a: \mathbb{H}^n \rightarrow \mathbb{C}$ given by $f_a \left( x \right) = \chi_{\left( a, 2a \right)} \left( \cosh d \left( 0, x \right) \right)$. Then, it is easy to observe (for instance, see Equation (5.22) of \cite{SelfLPImproving}) that
			$$\| f_a \|_{L^{p, 1}_{\mu_1, \mu_2} \left( \mathbb{H}^n \right)} \leq C a^{\frac{\mu_2 - 1}{p} + \alpha}.$$
			On the other hand, using Equation \eqref{FormualRadialFunctionsEquation}, we get
			$$R_{\alpha}f_a \left( \xi \right) = \frac{C}{\cosh^{\alpha - 1} d \left( 0, \xi \right)} \int\limits_{\cosh d \left( 0, \xi \right)}^{\infty} \chi_{\left( a, 2a \right)} \left( t \right) \left( t^2 - \cosh^2 d \left( 0, \xi \right) \right)^{\frac{\alpha}{2} - 1} \mathrm{d}t.$$
			By substituting $t = x \cosh d \left( 0, \xi \right)$ and simplifying, we get
			$$R_{\alpha}f_a \left( \xi \right) = C \int\limits_{1}^{\infty} \chi_{\left( a, 2a \right)} \left( x \cosh d \left( 0, \xi \right) \right) \left( x^2 - 1 \right)^{\frac{\alpha}{2} - 1} \mathrm{d}x.$$
			We now see, by considering $\xi \in \Xi_k \left( \mathbb{H}^n \right)$ with $\cosh d \left( 0, \xi \right) = a$, that
			$$\| \cosh^{\nu} d \left( 0, \cdot \right) R_{\alpha}f \left( \cdot \right) \|_{L^{\infty} \left( \Xi_k \left( \mathbb{H}^n \right) \right)} \geq C a^{\nu}.$$
			Now, we test Inequality \eqref{EndPointDesiredInequality1} against $f_a$ for varying $a > 1$, and obtain
			$$a^{\nu} \leq C a^{\frac{\mu_2 - 1}{p} + \alpha}.$$
			As $a \rightarrow \infty$, the above inequality forces
			$$\nu \leq \frac{\mu_2 - 1}{p} + \alpha = \frac{\mu_1 + \mu_2 + n - 1}{p},$$
			since $p = \frac{\mu_1 + n}{\alpha}$.
		\end{proof}
		We now give a proof of the sufficiency of the condition mentioned in Equation \eqref{EndPoint1NecessaryEquation}, thereby completely answering Question \ref{EndPointQuestion1}.
		\begin{theorem}
			\label{EndPoint1Sufficient}
			Let $\alpha \geq 1$, $\mu_1 + \mu_2 > \alpha - n$ and $p = \frac{\mu_1 + n}{\alpha}$. Then, Inequality \eqref{EndPointDesiredInequality1} holds for all radial functions $f \in L^{p, 1}_{\mu_1, \mu_2} \left( \mathbb{H}^n \right)$ if and only if $\nu$ satisfies the conditions of Equation \eqref{EndPoint1NecessaryEquation} and $p \left( \alpha - 1 \right) \geq 1$.
		\end{theorem}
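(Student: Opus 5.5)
By Theorem 3.13 of \cite{SteinWeissFA} it suffices to prove the sufficiency part of Inequality \eqref{EndPointDesiredInequality1} for $f = \chi_A$, with $A \subseteq \mathbb{H}^n$ a radial measurable set of finite measure. Write $A = \lbrace x : \cosh d \left( 0, x \right) \in E \rbrace$ with $E \subseteq \left( 1, \infty \right)$. Since $\cosh \geq 1$, lowering $\nu$ only helps, so I assume $\nu = \frac{\mu_1 + \mu_2 + n - 1}{p} = \frac{\mu_2 - 1}{p} + \alpha$. With $s = \cosh d \left( 0, \xi \right)$, Equation \eqref{FormualRadialFunctionsEquation} and the polar decomposition reduce the claim to the uniform-in-$s$ estimate
$$s^{\nu + 1 - \alpha} \int\limits_{s}^{\infty} \chi_E \left( t \right) \left( t^2 - s^2 \right)^{\frac{\alpha}{2} - 1} \mathrm{d}t \leq C \left( \int\limits_{1}^{\infty} \chi_E \left( t \right) \left( t^2 - 1 \right)^{\frac{\mu_1 + n}{2} - 1} t^{\mu_2} \mathrm{d}t \right)^{\frac{1}{p}},$$
where $\frac{\mu_1 + n}{2} = \frac{p\alpha}{2}$.

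I would split $E$ into the part near $s$ and the part far from $s$.

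\textbf{Far part, $t \geq 2s$.} Here $t^2 - s^2 \sim t^2$, so the kernel is comparable to $t^{\alpha - 2}$. Since $s \leq t$, the factor $s^{\nu + 1 - \alpha}$ is dominated by $t^{\nu + 1 - \alpha}$ when $\nu + 1 - \alpha \geq 0$; otherwise I keep it and use $s \geq 1$. After the substitution $u = t^2$, the left side has the form $\int \chi \left( u - 1 \right)^{\frac{\gamma_1 - 1}{2}} u^{\frac{\gamma_2 - 1}{2}} \mathrm{d}u$. I would apply Lemma \ref{MainLemma2} (Equation \eqref{MainLemma2Equation1}, or \eqref{MainLemma2Equation2} near $u = 1$) with $\gamma_1 = \alpha - 1$ and $\gamma_2$ chosen so that $p\gamma_2 - 1 = \mu_2 - 1$, so that the right side matches $\| \chi_A \|_{L^p_{\mu_1, \mu_2}}^{p}$. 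The hypothesis $\gamma_1 + \gamma_2 > 0$ of Lemma \ref{MainLemma2} (equivalently, the positivity needed in Equation \eqref{MainLemmaEuclideanEquation2} at infinity) is where I expect the condition $p \left( \alpha - 1 \right) \geq 1$, together with $\mu_1 + \mu_2 > \alpha - n$, to enter. Checking that the exponents line up exactly at $\nu = \frac{\mu_1 + \mu_2 + n - 1}{p}$ is routine but delicate.

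\textbf{Near part, $s \leq t \leq 2s$.} Write $t - s = \tau$, so the kernel behaves like $s^{\frac{\alpha}{2} - 1} \tau^{\frac{\alpha}{2} - 1}$. When $\alpha \geq 2$ the kernel is bounded by $\left( t^2 - 1 \right)^{\frac{\alpha}{2} - 1}$ and the argument of the far part applies. When $1 \leq \alpha < 2$ the kernel is singular, and this is the step I expect to be the main obstacle. For a characteristic function I would use the rearrangement bound $\int_{E \cap \left[ s, 2s \right]} \tau^{\frac{\alpha}{2} - 1} \mathrm{d}\tau \leq C \left| E \cap \left[ s, 2s \right] \right|^{\frac{\alpha}{2}}$. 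I would then compare $\left| E \cap \left[ s, 2s \right] \right|$ with the weighted measure of $E \cap \left[ s, 2s \right]$, where the weight is $\sim s^{p\alpha - 2 + \mu_2}$ for large $s$ and $\sim \left( t - 1 \right)^{\frac{p\alpha}{2} - 1}$ for $s$ near $1$. Balancing the powers of $s$ should use $\frac{\alpha}{2} \geq \frac{1}{p}$, i.e. $p \geq \frac{2}{\alpha}$. For $s$ near $1$ I would instead apply Lemma \ref{MainLemmaEuclidean} in the variable $t - 1$; this is the genuine Lorentz end-point $p = \frac{\mu_1 + n}{\alpha}$, and it is exactly where restricting to characteristic functions beats Theorem \ref{LPLInfinitySufficient}.

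For necessity of $\nu \leq \frac{\mu_1 + \mu_2 + n - 1}{p}$ I would invoke Theorem \ref{EndPoint1Necessary}. For necessity of $p \left( \alpha - 1 \right) \geq 1$ I would test against annuli, as in Example \ref{CoutnerExampleLPLInfinityAnuulus}. Using Equation \eqref{WeightedAnnulusRAlphaComputation} with $\chi_{\left( a, b \right)}$, I would either let $b \to \infty$ with $a$ fixed, or take thin shells $\left( a, a + \epsilon \right)$ with $a$ fixed. In either case I would compare the growth of $R_{\alpha}\chi$ at $\cosh d \left( 0, \xi \right) = a$ with $\| \chi \|_{L^{p, 1}_{\mu_1, \mu_2}} = \| \chi \|_{L^p_{\mu_1, \mu_2}}$. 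My expectation is that a L'H\^{o}spital-type comparison forces $p \left( \alpha - 1 \right) \geq 1$, but identifying the right family of test sets is itself uncertain.
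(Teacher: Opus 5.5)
\textbf{The main gap is the necessity of $p(\alpha-1)\geq 1$, and it cannot be filled, because that condition is not necessary.} Neither test family you name produces it.
\begin{itemize}
\item A thin shell $\chi_{(a,a+\epsilon)}(\cosh d(0,\cdot))$, evaluated at $\cosh d(0,\xi)=a$, gives $R_{\alpha}\chi\sim\epsilon^{\alpha/2}$ against a norm $\sim\epsilon^{1/p}$. This forces only $p\geq\frac{2}{\alpha}$, i.e.\ $\mu_1+n\geq 2$.
\item Letting $b\to\infty$ with $a$ fixed only recovers the existence range.
\end{itemize}
You also cannot borrow the paper's Example \ref{CounterExampleEndPoint1}. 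It reaches $p(\alpha-1)\geq 1$ only through the claim $(b-a)^{\alpha-1}\leq b^{\alpha-1}-a^{\alpha-1}$ for $a<b<2a$. That inequality is reversed for $1<\alpha<2$, by subadditivity of the concave map $x\mapsto x^{\alpha-1}$, and it is void at $\alpha=1$.

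Here is a concrete case where the condition fails but the inequality holds. Take $\alpha=1$, $n=3$, $\mu_1=\mu_2=0$, so $p=3$ and $p(\alpha-1)=0$. Let $A=\lbrace x:\cosh d(0,x)\in E\rbrace$ and $s=\cosh d(0,\xi)$. Use the substitution $t=s\cosh x$ together with $\int_F\sinh^2x\,\mathrm{d}x\geq |F|^3/3$, which follows from rearrangement and $\sinh x\geq x$. This gives
\[
\int\limits_{E\cap(s,\infty)}\frac{\mathrm{d}t}{\sqrt{t^2-s^2}}=\int\limits_{0}^{\infty}\chi_E(s\cosh x)\,\mathrm{d}x\leq 3^{1/3}\left(\int\limits_{0}^{\infty}\chi_E(s\cosh x)\sinh^2 x\,\mathrm{d}x\right)^{1/3}\leq \frac{3^{1/3}}{s^{2/3}}\left(\int\limits_{E}\sqrt{t^2-1}\,\mathrm{d}t\right)^{1/3}.
\]
Hence $\cosh^{2/3}d(0,\xi)\,R_1\chi_A(\xi)\leq C\|\chi_A\|_{L^{3,1}_{0,0}(\mathbb{H}^3)}$. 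By the reduction to characteristic functions, \eqref{EndPointDesiredInequality1} holds with $\nu=\frac{2}{3}$. This is the hyperbolic analogue of the radial $L^{n,1}\to L^{\infty}$ bound for the Euclidean X-ray transform. What is actually provable is the characterisation with $p\alpha\geq 2$ in place of $p(\alpha-1)\geq 1$, and your near-part scaling already detects exactly this.

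\textbf{On sufficiency, your far part does not close as planned.} Lemma \ref{MainLemma2} is not dilation invariant: its proof discards the factor $e^{(p-1)t}\geq 1$. At $\nu=\frac{\mu_1+\mu_2+n-1}{p}$, however, the estimate is invariant under $t\mapsto st$ for large $s$.
\begin{itemize}
\item With $\gamma_1=\alpha-1$ and $\gamma_2=\frac{\mu_2}{p}$, the lemma controls $\int_E (t^2-1)^{\frac{\alpha}{2}-1}t^{\frac{\mu_2}{p}}\,\mathrm{d}t$.
\item For $E=[2s,4s]$ you must control $s^{\frac{\mu_2+p-1}{p}}\int_E t^{\alpha-2}\,\mathrm{d}t$, which is larger by $s^{1-\frac{1}{p}}$.
\item Absorbing $s^{\nu+1-\alpha}$ into $t^{\nu+1-\alpha}$ instead just moves the same loss to the right-hand side.
\end{itemize}
The fix is to use the homogeneous bound \eqref{MainLemmaEuclideanEquation} with $\gamma=\alpha-1$, and then $s^{\mu_2+p-1}\leq t^{\mu_2+p-1}$ for $t\geq s$. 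Here $\mu_2>1-p$ comes from the standing assumption $p<\frac{\mu_1+\mu_2+n-1}{\alpha-1}$, not from $p(\alpha-1)\geq 1$. The case $\alpha=1$ needs a separate logarithmic bound. Near $s=1$ the singularity sits at $t=s$, not at $t=1$, so applying Lemma \ref{MainLemmaEuclidean} in $t-1$ also requires a split at scale $s-1$.

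The paper avoids the near/far split altogether. For $1\leq\alpha<2$ it substitutes $\cosh t=\cosh x\cosh d(0,\xi)$, so the kernel becomes $\sinh^{\alpha-1}x$, independent of $\xi$. It then applies Lemmas \ref{MainLemmaEuclidean} and \ref{MainLemma} in $x$ and undoes the substitution. The only extra cost is the bound $\left(\cosh^2 t-\cosh^2 d(0,\xi)\right)^{\frac{p\alpha}{2}-1}\leq\sinh^{p\alpha-2}t$. This is the sole place $p(\alpha-1)\geq 1$ enters the paper's proof, and only through its consequence $p\alpha\geq 2$.
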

		\begin{proof}
			First we observe that by replacing $t$ by $\cosh t$ in Equation \eqref{FormualRadialFunctionsEquation}, we get for radial functions on $\mathbb{H}^n$,
			\begin{align}
				\label{AlternateRepresentationsRAlpha1}
				R_{\alpha}f \left( \xi \right) &= \frac{C}{\cosh^{\alpha - 1} d \left( 0, \xi \right)} \int\limits_{d \left( 0, \xi \right)} \tilde{f} \left( \cosh t \right) \left( \cosh^2 t - \cosh^2 d \left( 0, \xi \right) \right)^{\frac{\alpha}{2} - 1} \sinh t \ \mathrm{d}t \\
				\label{AlternateRepresentationsRAlpha2}
				&= \frac{C}{\cosh^{\alpha - 1} d \left( 0, \xi \right)} \int\limits_{d \left( 0, \xi \right)}^{\infty} \tilde{f} \left( \cosh t \right) \left( 1 - \frac{\sinh^2 d \left( 0, \xi \right)}{\sinh^2 t} \right)^{\frac{\alpha}{2} - 1} \sinh^{\alpha - 1} t \ \mathrm{d}t.
			\end{align}
			We prove the result in two cases.
			
			\textbf{Case I:} First, we consider $\alpha \geq 2$, which is an easier case to deal. We notice that in this case, we have
			$$\left( 1 - \frac{\sinh^{2} d \left( 0, \xi \right)}{\sinh^2 t} \right)^{\frac{\alpha}{2} - 1} \leq 1,$$
			for any $t \geq d \left( 0, \xi \right)$. Therefore, using Equation \eqref{AlternateRepresentationsRAlpha2}, we get
			\begin{align}
				\label{EndPoint1Case1Intermediate}
				\left| \cosh^{\nu} d \left( 0, \xi \right) R_{\alpha}f \left( \xi \right) \right| &\leq C \cosh^{\nu - \alpha + 1} d \left( 0, \xi \right) \int\limits_{d \left( 0, \xi \right)}^{\infty} \left| \tilde{f} \left( \cosh t \right) \right| \sinh^{\alpha - 1} t \ \mathrm{d}t.
			\end{align}
			Let us further divide our analysis into two cases depending on the values of $d \left( 0, \xi \right)$.
			
			\textbf{Case I(A):} Let us consider $d \left( 0, \xi \right)$ to be ``large", say $d \left( 0, \xi \right) \geq 1$. We have that $\frac{\mu_1 + n}{\alpha} = p \geq 1$ and $\mu_1 + \mu_2 > \alpha - n$. These estimates together with the fact that $\alpha \geq 1$, give $p > 1 - \mu_2$. Therefore, by an application of Equation \eqref{MainLemmaEquation} of Lemma \ref{MainLemma}, we get
			\begin{align*}
				\left| \cosh^{\nu} d \left( 0, \xi \right) R_{\alpha}f \left( \xi \right) \right| &\leq C \cosh^{\nu - \alpha + 1} d \left( 0, \xi \right) \times \\
                &\left( \int\limits_{d \left( 0, \xi \right)}^{\infty} \tilde{f} \left( \cosh t \right) \sinh^{\mu_1 + n - 1} t \cosh^{\mu_2} t \sinh^{1 - p - \mu_2} t \ \mathrm{d}t \right)^{\frac{1}{p}}.
			\end{align*}
			Since we have $1 - p - \mu_2 < 0$, we observe that $\cosh^{1 - p - \mu_2} t \leq \cosh^{1 - p - \mu_2} d \left( 0, \xi \right)$, for every $t > d \left( 0, \xi \right)$. Hence, we get
			\begin{align*}
				\left| \cosh^{\nu} d \left( 0, \xi \right) R_{\alpha}f \left( \xi \right) \right| &\leq C \cosh^{\nu - \alpha - \frac{\mu_2 - 1}{p}} d \left( 0, \xi \right) \times \\
                &\left( \int\limits_{d \left( 0, \xi \right)}^{\infty} \tilde{f} \left( \cosh t \right) \sinh^{\mu_1 + n - 1} t \cosh^{\mu_2} t \tanh^{1 - p - \mu_2} t \ \mathrm{d}t \right)^{\frac{1}{p}}.
			\end{align*}
			Further, we notice from Equation \eqref{EndPoint1NecessaryEquation} that $\cosh^{\nu - \alpha - \frac{\mu_2 - 1}{p}} d \left( 0, \xi \right) \leq 1$, for every $\xi \in \Xi_k \left( \mathbb{H}^n \right)$. Also, since $d \left( 0, \xi \right) \geq 1$, we have $\tanh 1 \leq \tanh t \leq 1$, for any $t > d \left( 0, \xi \right)$. Therefore, we get,
			\begin{align*}
				\left| \cosh^{\nu} d \left( 0, \xi \right) R_{\alpha}f \left( \xi \right) \right| \leq C \| f \|_{L^{p, 1}_{\mu_1, \mu_2} \left( \mathbb{H}^n \right)}.
			\end{align*}
			
			\textbf{Case I(B):} Let us now consider $0 < d \left( 0, \xi \right) < 1$. Here, we observe that any term involving powers of $\cosh d \left( 0, \xi \right)$ behaves as a constant, and therefore, we may introduce or remove such terms without affecting the inequalities (upto constants). Consequently, to estimate the left-hand-side of Equation \eqref{EndPointDesiredInequality1}, it is sufficient to look at the integral on the right-hand-side of Equation \eqref{EndPoint1Case1Intermediate}. We split the integral into two parts as follows.
			\begin{align*}
				\int\limits_{d \left( 0, \xi \right)}^{\infty} \tilde{f} \left( \cosh t \right) \sinh^{\alpha - 1} t \ \mathrm{d}t &= \int\limits_{d \left( 0, \xi \right)}^{1} \tilde{f} \left( \cosh t \right) \sinh^{\alpha - 1} t \ \mathrm{d}t + \int\limits_{1}^{\infty} \tilde{f} \left( \cosh t \right) \sinh^{\alpha - 1} t \ \mathrm{d}t \\
				&\leq \int\limits_{d \left( 0, \xi \right)}^{1} \tilde{f} \left( \cosh t \right) \sinh^{\alpha - 1} t \ \mathrm{d}t + C \| f \|_{L^{p, 1}_{\mu_1, \mu_2} \left( \mathbb{H}^n \right)},
			\end{align*}
			due to Case I(A) and the fact that any power of $\cosh d \left( 0, \xi \right)$ can be replaced by a constant. Therefore, we only need to study the first integral appearing above. To do so, let us start by observing that when $0 < d \left( 0, \xi \right) < t < 1$, $\sinh t$ behaves as $t$. Therefore, we have,
			$$\int\limits_{d \left( 0, \xi \right)}^{1} \tilde{f} \left( \cosh t \right) \sinh^{\alpha - 1} t \ \mathrm{d}t \leq C \int\limits_{d \left( 0, \xi \right)}^{1} \tilde{f} \left( \cosh t \right) t^{\alpha - 1} \mathrm{d}t.$$
			Now, using Equation \eqref{MainLemmaEuclideanEquation} of Lemma \ref{MainLemmaEuclidean}, we get
			\begin{align*}
				\int\limits_{d \left( 0, \xi \right)}^{1} \tilde{f} \left( \cosh t \right) \sinh^{\alpha - 1} t \ \mathrm{d}t \leq C \left( \int\limits_{d \left( 0, \xi \right)}^{1} \tilde{f} \left( \cosh t \right) t^{\mu_1 + n - 1} \ \mathrm{d}t \right)^{\frac{1}{p}}.
			\end{align*}
			Again replacing $t$ by $\sinh t$ and using the fact that any power of $\cosh t$ behaves as a constant when $d \left( 0, \xi \right) < t < 1$, we get
			\begin{align*}
				\int\limits_{d \left( 0, \xi \right)}^{1} \tilde{f} \left( \cosh t \right) \sinh^{\alpha - 1} t \mathrm{d}t \leq C \| f \|_{L^{p, 1}_{\mu_1, \mu_2} \left( \mathbb{H}^n \right)}.
			\end{align*}
			This completes the proof of Case I.
			
			\textbf{Case II:} We now consider the case when $1 \leq \alpha < 2$. Here, we substitute $\cosh t = \cosh x \cosh d \left( 0, \xi \right)$ in Equation \eqref{AlternateRepresentationsRAlpha1} and simplify to get
			\begin{align*}
				\left| \cosh^{\nu} d \left( 0, \xi \right) R_{\alpha}f \left( \xi \right) \right| &\leq C \cosh^{\nu} d \left( 0, \xi \right) \int\limits_{0}^{\infty} \left| \tilde{f} \left( \cosh x \cosh d \left( 0, \xi \right) \right) \right| \sinh^{\alpha - 1} x \ \mathrm{d}x \\
				&= C \cosh^{\nu} d \left( 0, \xi \right) \left[ \int\limits_{0}^{1} \left| \tilde{f} \left( \cosh x \cosh d \left( 0, \xi \right) \right) \right| \sinh^{\alpha - 1} x \ \mathrm{d}x \right. \\
                &\left. + \int\limits_{1}^{\infty} \left| \tilde{f} \left( \cosh x \cosh d \left( 0, \xi \right) \right) \right| \sinh^{\alpha - 1} x \ \mathrm{d}x \right].
			\end{align*}
			Now, we use the fact that when $x \in \left( 0, 1 \right)$, $\sinh x$ behaves as $x$ and $\cosh x$ behaves as a constant, and therefore, we may apply Equation \eqref{MainLemmaEuclideanEquation} of Lemma \ref{MainLemmaEuclidean} on the first integral above. On the other hand, we have observed in Case I that $p > 1 - \mu_2$, so that we may apply Equation \eqref{MainLemmaEquation} of Lemma \ref{MainLemma} on the second integral above. Hence, we get after a simplification,
			\begin{align*}
				\left| \cosh^{\nu} d \left( 0, \xi \right) R_{\alpha}f \left( \xi \right) \right| &\leq C \cosh^{\nu} d \left( 0, \xi \right) \times \\
                &\left( \int\limits_{0}^{\infty} \left| \tilde{f} \left( \cosh x \cosh d \left( 0, \xi \right) \right) \right| \sinh^{\mu_1 + n - 1} t \cosh^{\mu_2} t \ \mathrm{d}x \right)^{\frac{1}{p}}.
			\end{align*}
			Resubstituting $\cosh x = \frac{\cosh t}{\cosh d \left( 0, \xi \right)}$ and simplifying, we get
			\begin{align*}
				&\left| \cosh^{\nu} d \left( 0, \xi \right) R_{\alpha}f \left( \xi \right) \right| \leq C \cosh^{\nu - \frac{\mu_1 + \mu_2 + n - 1}{p}} d \left( 0, \xi \right) \times \\
                &\left( \int\limits_{d \left( 0, \xi \right)}^{\infty} \left| \tilde{f} \left( \cosh t \right) \right| \left( \cosh^2 t - \cosh^2 d \left( 0, \xi \right) \right)^{\frac{\mu_1 + n}{2} - 1} \cosh^{\mu_2} t \sinh t \ \mathrm{d}t \right)^{\frac{1}{p}}.
			\end{align*}
			Finally, using Equation \eqref{EndPoint1NecessaryEquation} and the fact that $p \left( \alpha - 1 \right) \geq 1$ and $\alpha < 2$, we get $p\alpha = \mu_1 + n \geq 2$, and hence,
			\begin{align*}
				\left| \cosh^{\nu} d \left( 0, \xi \right) R_{\alpha}f \left( \xi \right) \right| &\leq C \| f \|_{L^{p, 1}_{\mu_1, \mu_2} \left( \mathbb{H}^n \right)}.
			\end{align*}
			This shows that the conditions mentioned in the statement of the Theorem are sufficient. The necessity of Equation \eqref{EndPoint1NecessaryEquation} was proved in Theorem \ref{EndPoint1Necessary}. On the other hand, we notice that if $\alpha \geq 2$, then the condition $p \left( \alpha - 1 \right) \geq 1$ is redundant. The necessity of this condition when $1 \leq \alpha < 2$ is made clear in Example \ref{CounterExampleEndPoint1}.
		\end{proof}
		We now see that for a Lorentz space estimate of the operator $R_{\alpha}$, we require $p \geq \frac{1}{\alpha - 1}$. As mentioned in the proof of Theorem \ref{EndPoint1Sufficient}, we shall focus on $1 \leq \alpha < 2$.
		\begin{example}
			\label{CounterExampleEndPoint1}
			\normalfont
			Let us fix $1 \leq \alpha < 2$, $\mu_1 + \mu_2 > \alpha - n$ and $p \geq 1$. For a fixed $a > 1$, and $b > a$, let us consider the function $f_b: \mathbb{H}^n \rightarrow \mathbb{C}$, given by $f_b \left( x \right) = \chi_{\left( a, b \right)} \left( \cosh d \left( 0, x \right) \right)$. Clearly, $f_b$ is the characteristic function of an annulus in $\mathbb{H}^n$ with inner radius $\cosh^{-1} \left( a \right)$ and outer radius $\cosh^{-1} \left( b \right)$. Therefore, we have,
			$$\| f_b \|_{L^{p, 1}_{\mu_1, \mu_2} \left( \mathbb{H}^n \right)} = C \left( \int\limits_{a}^{b} \left( t^2 - 1 \right)^{\frac{\mu_1 + n}{2} - 1} t^{\mu_2} \mathrm{d}t \right)^{\frac{1}{p}}.$$
			On the other hand, using Equation \eqref{FormualRadialFunctionsEquation}, we get
			\begin{align*}
				R_{\alpha}f \left( \xi \right) &= C \cosh^{1 - \alpha} d \left( 0, \xi \right) \int\limits_{\max \left\lbrace \cosh d \left( 0, \xi \right), a \right\rbrace}^{b} \left( t^2 - \cosh^2 d \left( 0, \xi \right) \right)^{\frac{\alpha}{2} - 1} \mathrm{d}t.
			\end{align*}
			Since $\alpha < 2$, we have $\left( t^2 - \cosh^2 d \left( 0, \xi \right) \right)^{\frac{\alpha}{2} - 1} \geq t^{\alpha - 2}$. Thus, choosing $\xi \in \Xi_k \left( \mathbb{H}^n \right)$ such that $\cosh d \left( 0, \xi \right) = a$, we notice that
			\begin{align*}
				&\| \sinh^{\nu_1} d \left( 0, \cdot \right) \cosh^{\nu_2} d \left( 0, \cdot \right) R_{\alpha}f \left( \cdot \right) \|_{L^{\infty} \left( \Xi_k \left( \mathbb{H}^n \right) \right)} \\
                &\geq C \left( a^2 - 1 \right)^{\nu_1} a^{\nu_2 - \alpha + 1} \left( b^{\alpha - 1} - a^{\alpha - 1} \right).
			\end{align*}
			Now, we choose $b < 2a$ so that $\left( b - a \right)^{\alpha - 1} \leq b^{\alpha - 1} - a^{\alpha - 1}$. That is,
			\begin{align*}
				\| \sinh^{\nu_1} d \left( 0, \cdot \right) \cosh^{\nu_2} d \left( 0, \cdot \right) R_{\alpha}f \left( \cdot \right) \|_{L^{\infty} \left( \Xi_k \left( \mathbb{H}^n \right) \right)} &\geq C \left( a^2 - 1 \right)^{\nu_1} a^{\nu_2 - \alpha + 1} \left( b - a \right)^{\alpha - 1}.
			\end{align*}
			We now test whether we have weighted $L^{p, 1} \left( \mathbb{H}^n \right)$-$L^{\infty} \left( \Xi_k \left( \mathbb{H}^n \right) \right)$ boundedness of the operator $R_{\alpha}$ for the functions $f_b$, where $a < b < 2a$. If such an operator boundedness holds, we would have, for every $a < b < 2a$,
			$$\left( b - a \right)^{p \left( \alpha - 1 \right)} \leq C_a \int\limits_{a}^{b} \left( t^2 - 1 \right)^{\frac{\mu_1 + n}{2} - 1} t^{\mu_2} \mathrm{d}t.$$
			It is easily seen by a simple application of L'H\^{o}spital Rule that this is not possible unless $p \left( \alpha - 1 \right) \geq 1$.
		\end{example}
		Theorems \ref{EndPoint1Necessary} and \ref{EndPoint1Sufficient} together with Example \ref{CounterExampleEndPoint1} completely answer Question \ref{EndPointQuestion1}. We now move on to answer Question \ref{EndPointQuestion2}. As before, we start by obtaining necessary conditions for Inequality \eqref{EndPointDesiredInequality2} to hold.
		\begin{theorem}
			\label{EndPoint2Necessary}
			Let $\alpha > 1$, $\mu_1 + \mu_2 > \alpha - n$ and $p = \frac{\mu_1 + \mu_2 + n - 1}{\alpha - 1}$. Then, Inequality \eqref{EndPointDesiredInequality2} holds only if
			\begin{equation}
				\label{EndPoint2NecessaryEquation}
				\nu_1 \geq \max \left\lbrace 0, \frac{\mu_1 + n}{p} - \alpha \right\rbrace \text{ and } \nu_1 + \nu_2 \leq \alpha - 1.
			\end{equation}
		\end{theorem}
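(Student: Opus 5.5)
We test Inequality \eqref{EndPointDesiredInequality2} against characteristic functions of balls, exactly as in the proof of Theorem \ref{LPLInfinityNecessary}. Characteristic functions of sets are where the $L^{p,1}_{\mu_1,\mu_2}$ and $L^p_{\mu_1,\mu_2}$ norms coincide, so the norm computations made earlier can be reused.

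\textbf{The test functions.} In Case I ($\mu_1 > -n$) we take $f_\lambda = \chi_{B(0,\lambda)}$. In Case II ($\mu_1 \le -n$) we take a weighted variant, as in Case II of Theorem \ref{LPLInfinityNecessary}; since the weighted variant is not a characteristic function, there we instead use the characteristic function of an annulus $\{\lambda/2 < d(0,x) < \lambda\}$, whose norm has the same asymptotics. The weighted volume is
$$\|f_\lambda\|_{L^{p,1}_{\mu_1,\mu_2}} = C\Big(\int_0^\lambda \sinh^{\mu_1+n-1}t\cosh^{\mu_2}t\,\mathrm{d}t\Big)^{1/p}.$$
This behaves like $\lambda^{(\mu_1+n)/p}$ for small $\lambda$ and like $e^{\frac{\mu_1+\mu_2+n-1}{p}\lambda} = e^{(\alpha-1)\lambda}$ for large $\lambda$, because $p = \frac{\mu_1+\mu_2+n-1}{\alpha-1}$.

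\textbf{The transform.} By Equation \eqref{FormualRadialFunctionsEquation},
$$R_\alpha f_\lambda(\xi) = C\cosh^{1-\alpha}d(0,\xi)\int_{\cosh d(0,\xi)}^{\cosh\lambda}(t^2-\cosh^2 d(0,\xi))^{\frac\alpha2-1}\,\mathrm{d}t.$$
Choose $\xi$ with $\sinh^2 d(0,\xi) = \frac12\sinh^2\lambda$ and substitute $t = \cosh d(0,\xi)\,x$. The integral becomes $\int_1^{\cosh\lambda/\cosh d(0,\xi)}(x^2-1)^{\alpha/2-1}\,\mathrm{d}x$. The upper limit stays bounded away from $1$: for small $\lambda$ it is about $1+c\lambda^2$, and for large $\lambda$ it tends to $\sqrt2$. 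Hence the weighted sup norm is at least a constant times $\lambda^{\nu_1+\alpha}$ for small $\lambda$ and $e^{(\nu_1+\nu_2)\lambda}$ for large $\lambda$.

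\textbf{Reading off the conditions.} Comparing the two sides:
\begin{itemize}
\item As $\lambda \to 0$, the bound $\lambda^{\nu_1+\alpha} \le C\lambda^{(\mu_1+n)/p}$ forces $\nu_1 \ge \frac{\mu_1+n}{p}-\alpha$.
\item As $\lambda \to \infty$, the bound $e^{(\nu_1+\nu_2)\lambda} \le Ce^{(\alpha-1)\lambda}$ forces $\nu_1+\nu_2 \le \alpha-1$.
\item For $\nu_1 \ge 0$, take $\xi \ni 0$, so $d(0,\xi)=0$. Then $R_\alpha f_\lambda(\xi) = C\int_1^{\cosh\lambda}(t^2-1)^{\alpha/2-1}\,\mathrm{d}t > 0$. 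If $\nu_1 < 0$, the weight $\sinh^{\nu_1}d(0,\xi)$ blows up as $\xi$ approaches the planes through $0$. Continuity of $R_\alpha f_\lambda$ in $d(0,\xi)$ keeps the transform bounded below on a set of positive measure of nearby $\xi$, so the $L^\infty$ norm is infinite.
\end{itemize}

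\textbf{Expected obstacle.} The delicate step is Case II ($\mu_1 \le -n$). There the small-$\lambda$ weighted volume of a ball diverges, so the test functions must be supported away from the origin while keeping the asymptotics above. Using annuli $\{\lambda/2 < d(0,x) < \lambda\}$, the weighted measure is comparable to $\lambda^{(\mu_1+n)/p}$ even when $\mu_1+n \le 0$. The lower bound for the transform still holds by evaluating at $\xi$ with $d(0,\xi) \approx \frac34\lambda$, and the same comparisons then apply.
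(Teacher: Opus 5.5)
Your proposal is correct and follows the paper's proof. Both test on $\chi_{B(0,\lambda)}$, evaluate at $\sinh^2 d(0,\xi)=\frac12\sinh^2\lambda$, let $\lambda\to0$ and $\lambda\to\infty$, and get $\nu_1\ge0$ from planes through the origin. Your additions fill details the paper's proof omits:
\begin{itemize}
\item your substitution $t=x\cosh d(0,\xi)$ replaces the paper's hypergeometric formula;
\item your annuli cover the case $\mu_1\le-n$, where $\chi_{B(0,\lambda)}$ has infinite norm and the paper's argument gives nothing;
\item your continuity argument handles the essential supremum.
\end{itemize}

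One wording fix. For small $\lambda$ the upper limit $\cosh\lambda/\cosh d(0,\xi)\approx1+\frac{\lambda^2}{4}$ is not bounded away from $1$. The factor $\lambda^{\alpha}$ in your lower bound comes precisely from $\int_1^{1+c\lambda^2}(x^2-1)^{\frac{\alpha}{2}-1}\,\mathrm{d}x\asymp\lambda^{\alpha}$. Similarly, for the annulus at $d(0,\xi)=\frac34\lambda$ the large-$\lambda$ lower bound is $e^{(\frac34(\nu_1+\nu_2)+\frac14(\alpha-1))\lambda}$ rather than $e^{(\nu_1+\nu_2)\lambda}$. It still yields the same condition $\nu_1+\nu_2\le\alpha-1$.
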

		\begin{proof}
			For $\lambda > 0$, let us consider the function $f_{\lambda}: \mathbb{H}^n \rightarrow \mathbb{C}$ defined as $f_{\lambda} \left( x \right) = \chi_{B \left( 0, \lambda \right)} \left( x \right)$. Then, it is easy to see that
			\begin{equation}
				\label{LorentzNormBallEstimate}
                \begin{aligned}
                    \| f_{\lambda} \|_{L^{p, 1}_{\mu_1, \mu_2} \left( \mathbb{H}^n \right)} &= C \sinh^{\frac{\mu_1 + n}{p}} \lambda \cosh^{\frac{\mu_2 - 1}{p}} \lambda \left[ {}_2F_1 \left( \frac{1 - \mu_2}{2}, 1; 1 + \frac{\mu_1 + n}{2}; \tanh^2 \lambda \right) \right]^{\frac{1}{p}} \\
                    &\leq C \sinh^{\frac{\mu_1 + n}{p}} \lambda \cosh^{\frac{\mu_2 - 1}{p}} \lambda.
                \end{aligned}
			\end{equation}
			On the other hand, by substituting $t^2 = \cosh^2 d \left( 0, \xi \right) + \left( \cosh^2 \lambda - \cosh^2 d \left( 0, \xi \right) \right) u$, using Equation 15.6.1 of \cite{Olver} and simplifying, we get
			\begin{equation}
				\label{RAlphaBallComputationLorentz}
                \begin{aligned}
                    R_{\alpha}f_{\lambda} \left( \xi \right) = &C \tanh^{\alpha} \lambda \left( 1 - \frac{\sinh^2 d \left( 0, \xi \right)}{\sinh^2 \lambda} \right)^{\frac{\alpha}{2}} \times \\
                    &{}_2F_1 \left( \frac{\alpha + 1}{2}, \frac{\alpha}{2}; 1 + \frac{\alpha}{2}; \tanh^2 \lambda \left( 1 - \frac{\sinh^2 d \left( 0, \xi \right)}{\sinh^2 \lambda} \right) \right).
                \end{aligned}
			\end{equation}
			Therefore, by choosing $\xi \in \Xi_k \left( \mathbb{H}^n \right)$ such that $\sinh^2 d \left( 0, \xi \right) = \frac{1}{2} \sinh^2 \lambda$ in Equation \eqref{RAlphaBallComputationLorentz}, we get
			\begin{equation}
				\label{LInfinityNormRAlphaBallEstimate}
                \begin{aligned}
                    &\| \sinh^{\nu_1} d \left( 0, \cdot \right) \cosh^{\nu_2} d \left( 0, \cdot \right) R_{\alpha}f \left( \cdot \right) \|_{L^{\infty} \left( \Xi_k \left( \mathbb{H}^n \right) \right)} \\
                    &\geq C \sinh^{\nu_1} \lambda \left( 1 + \cosh^2 \lambda \right)^{\frac{\nu_2}{2}} \tanh^{\alpha} \lambda \ {}_2F_1 \left( \frac{\alpha + 1}{2}, \frac{\alpha}{2}; 1 + \frac{\alpha}{2}; \frac{\tanh^2 \lambda}{2} \right).
                \end{aligned}
			\end{equation}
			Also, from Equation \eqref{RAlphaBallComputationLorentz}, it is clear that $\nu_1 \geq 0$ is necessary for otherwise $\| \sinh^{\nu_1} d \left( 0, \cdot \right) \cosh^{\nu_2} d \left( 0, \cdot \right) R_{\alpha}f_{\lambda} \left( \cdot \right) \|_{L^{\infty} \left( \Xi_k \left( \mathbb{H}^n \right) \right)} = + \infty$.
			
			Now we use Inequalities \eqref{LInfinityNormRAlphaBallEstimate} and \eqref{LorentzNormBallEstimate} in Inequality \eqref{EndPointDesiredInequality2} and take $\lambda \rightarrow 0$ and $\lambda \rightarrow \infty$ to get the conditions mentioned in Equation \eqref{EndPoint2NecessaryEquation}.
		\end{proof}
		We now show the sufficiency of the conditions mentioned in Equation \eqref{EndPoint2NecessaryEquation}.
		\begin{theorem}
			\label{EndPoint2Sufficient}
			Let $\alpha > 1$, $\mu_1 + \mu_2 > \alpha - n$ and $p = \frac{\mu_1 + \mu_2 + n - 1}{\alpha - 1}$. Then, Inequality \eqref{EndPointDesiredInequality2} holds for all radial functions $f \in L^{p, 1}_{\mu_1, \mu_2} \left( \mathbb{H}^n \right)$ if and only if $\nu_1$ and $\nu_2$ satisfy the conditions of Equation \eqref{EndPoint2NecessaryEquation} and $p \left( \alpha - 1 \right) \geq 1$.
		\end{theorem}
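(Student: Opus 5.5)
We follow the scheme of Theorem \ref{EndPoint1Sufficient}. By Theorem 3.13 of \cite{SteinWeissFA} it suffices to treat $f = \chi_A$ with $A$ radial, so $\tilde f = \chi_E$ for a set $E \subseteq (1,\infty)$. Necessity of Equation \eqref{EndPoint2NecessaryEquation} is Theorem \ref{EndPoint2Necessary}. Necessity of $p(\alpha - 1) \geq 1$ comes from the annulus functions $f_b$ of Example \ref{CounterExampleEndPoint1}. For $\alpha \geq 2$ that condition is automatic, and for $1 < \alpha < 2$ the same L'H\^{o}spital argument with $a < b < 2a$ shows it is required. It remains to prove sufficiency.

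We start from Equation \eqref{AlternateRepresentationsRAlpha1} and substitute $\cosh t = \cosh x \cosh d(0,\xi)$, as in Case II of Theorem \ref{EndPoint1Sufficient}. This gives
$$\left| \sinh^{\nu_1} d(0,\xi) \cosh^{\nu_2} d(0,\xi) R_\alpha f(\xi) \right| \leq C \sinh^{\nu_1} d(0,\xi) \cosh^{\nu_2} d(0,\xi) \int_0^\infty \chi_E\left(\cosh x \cosh d(0,\xi)\right) \sinh^{\alpha-1} x \, \mathrm{d}x,$$
which works for every $\alpha > 1$ and avoids splitting into $\alpha \geq 2$ and $\alpha < 2$. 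We split the $x$-integral at $x = 1$.

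On $(0,1)$ we use $\sinh x \sim x$ and Equation \eqref{MainLemmaEuclideanEquation} of Lemma \ref{MainLemmaEuclidean} with $\gamma = \alpha - 1 > 0$. This bounds the piece by $\left(\int_0^1 \chi \, x^{p(\alpha-1)-1} \, \mathrm{d}x\right)^{1/p}$. Since $p(\alpha-1) = \mu_1 + \mu_2 + n - 1$, we need to compare $x^{p(\alpha-1)-1}$ with $\sinh^{\mu_1+n-1} x$. The hypothesis $p(\alpha-1) \geq 1$ keeps the exponent nonnegative. The factor $x^{\mu_2}$ is harmless when $\mu_2 \geq 0$. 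When $\mu_2 < 0$, we instead apply the second form, Equation \eqref{MainLemma2Equation2} of Lemma \ref{MainLemma2}, to land on the measure $(t-1)^{(\mu_1+n)/2-1} t^{(\mu_2-1)/2}$.

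On $(1,\infty)$ we apply Equation \eqref{MainLemmaEquation} of Lemma \ref{MainLemma} with $\gamma = \alpha - 1$ and $\alpha$ replaced by $\mu_2$. This is legitimate because $p \geq 1 - \mu_2$, which follows from $p(\alpha - 1) = \mu_1 + \mu_2 + n - 1$ exactly as in Case I(A) of Theorem \ref{EndPoint1Sufficient}. The output is the integrand $\sinh^{\mu_1+n-1} x \cosh^{\mu_2} x$.

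We then resubstitute $\cosh x = \cosh t / \cosh d(0,\xi)$. This yields
$$\left| \sinh^{\nu_1} d(0,\xi) \cosh^{\nu_2} d(0,\xi) R_\alpha f(\xi) \right| \leq C \sinh^{\nu_1} d(0,\xi) \cosh^{\nu_2 - (\alpha-1)} d(0,\xi) \left( \int_{d(0,\xi)}^\infty \chi_E(\cosh t) \left(\cosh^2 t - \cosh^2 d(0,\xi)\right)^{\frac{\mu_1+n}{2}-1} \cosh^{\mu_2} t \sinh t \, \mathrm{d}t \right)^{1/p}.$$
Two comparisons finish the estimate.

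\textbf{Large $d(0,\xi)$.} The prefactor behaves like $e^{(\nu_1 + \nu_2 - \alpha + 1) d(0,\xi)} \leq C$ by Equation \eqref{EndPoint2NecessaryEquation}. Moreover $\cosh^2 t - \cosh^2 d(0,\xi) \leq \cosh^2 t - 1 = \sinh^2 t$ whenever $\mu_1 + n \geq 2$, so the integral is at most $\| f \|_{L^{p,1}_{\mu_1,\mu_2}}^p$.

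\textbf{Small $d(0,\xi)$.} Here $\nu_1 \geq 0$ bounds the prefactor. For the part of the integral with $t$ near $d(0,\xi)$ we argue as in Case II(B) of Theorem \ref{LPLInfinitySufficient}. The leftover factor is $\sinh^{\nu_1 - (\mu_1+n)/p + \alpha} d(0,\xi)$, which is bounded because $\nu_1 \geq \frac{\mu_1+n}{p} - \alpha$.

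The main obstacle is the kernel $\left(\cosh^2 t - \cosh^2 d(0,\xi)\right)^{\frac{\mu_1+n}{2}-1}$ when $\mu_1 + n < 2$, where it blows up at $t = d(0,\xi)$ and cannot be dominated by $\sinh^{\mu_1+n-2} t$. To handle this, we plan to avoid Lemma \ref{MainLemma} on the piece near $x = 0$. Instead we would apply Lemma \ref{MainLemmaEuclidean} directly in the variable $s = \cosh^2 t - 1$, on the interval $\sinh^2 d(0,\xi) < s < 2\sinh^2 d(0,\xi)$. On this interval the weight $s^{(\mu_1+n)/2-1}$ is comparable to a constant times a power of $\sinh d(0,\xi)$, and that power is absorbed by the condition $\nu_1 \geq \frac{\mu_1+n}{p} - \alpha$. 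The remaining range $s > 2\sinh^2 d(0,\xi)$ has $\cosh^2 t - \cosh^2 d(0,\xi) \sim \sinh^2 t$ and is treated as before.
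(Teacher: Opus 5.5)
\textbf{Gap in sufficiency.} Your argument breaks down whenever $\frac{\mu_1+n}{p}>\alpha$.

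Since $p\alpha=\mu_1+\mu_2+n-1+p$, one has $\frac{\mu_1+n}{p}-\alpha=\frac{1-\mu_2-p}{p}$. So $p\ge 1-\mu_2$ is \emph{equivalent} to $\frac{\mu_1+n}{p}\le\alpha$. It does not follow from $p(\alpha-1)=\mu_1+\mu_2+n-1$; take e.g.\ $\alpha=3$, $\mu_1+n=20$, $\mu_2=-10$, $p=\frac{9}{2}$. The bound $\nu_1\ge\frac{\mu_1+n}{p}-\alpha$ in \eqref{EndPoint2NecessaryEquation} exists precisely for this regime, which the paper treats in its Case~II and Case~III(B).

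The substitution $\cosh t=\cosh x\cosh d(0,\xi)$ and the resubstitution are exact. Hence your displayed estimate is equivalent, for each fixed $\xi$, to the $\xi$-free inequality $\int_F\sinh^{\alpha-1}x\,\mathrm{d}x\le C\big(\int_F\sinh^{\mu_1+n-1}x\cosh^{\mu_2}x\,\mathrm{d}x\big)^{1/p}$. Taking $F=(0,\epsilon)$ gives $\epsilon^{\alpha}\lesssim\epsilon^{(\mu_1+n)/p}$, which is false as $\epsilon\to0$.

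The cause is that you replaced the true radial weight $\cosh^2t-1=\cosh^2d(0,\xi)\sinh^2x+\sinh^2d(0,\xi)$ by $\cosh^2d(0,\xi)\sinh^2x$. This deletes all dependence on $d(0,\xi)$, so the factor $\sinh^{\nu_1}d(0,\xi)$ can never be used. The ``leftover factor'' in your small-$d$ paragraph is not produced by anything you proved. The paper instead ends up in the origin-centred variable, keeps the constraint $t>d(0,\xi)$, and spends $\nu_1$ there (e.g.\ via $\sinh^{\mu_2+p-1}t\le\sinh^{\mu_2+p-1}d(0,\xi)$, or $\sinh^{\nu_1}d(0,\xi)\le\sinh^{\nu_1}t$).

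The $(0,1)$ step is also miscomputed. Lemma~\ref{MainLemmaEuclidean} with $\gamma=\alpha-1$ acts on $x^{\alpha-2}$, and the resulting comparison factor is $x^{\mu_2-1}$, not $x^{\mu_2}$. Lemma~\ref{MainLemma2} cannot output the measure you name, because its exponents are fixed by the kernel.

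\textbf{What works, and the repair.} Your route is correct, and shorter than the paper's, when $p\ge1-\mu_2$ and $\mu_1+n\ge2$: just apply Lemma~\ref{MainLemma} on all of $(0,\infty)$. In general, the idea in your last paragraph is the right one, but it must replace the $x$-argument, not patch it.
\begin{enumerate}
\item Work in $\tau=\sinh^2 r$. There the kernel is $(\tau-\sinh^2d(0,\xi))^{\frac{\alpha}{2}-1}(1+\tau)^{-\frac{1}{2}}$ and the norm density is $\tau^{\frac{\mu_1+n}{2}-1}(1+\tau)^{\frac{\mu_2-1}{2}}$. Split at $\tau=2\sinh^2d(0,\xi)$.
\item On the near piece the density is essentially constant. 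Writing $u=\tau-\sinh^2d(0,\xi)$, for $F\subseteq(0,\sinh^2d(0,\xi))$ one has $\int_F u^{\frac{\alpha}{2}-1}\,\mathrm{d}u\le C|F|^{\frac{\alpha}{2}}\le C|F|^{\frac{1}{p}}\sinh^{\alpha-\frac{2}{p}}d(0,\xi)$. This step needs $p\alpha\ge2$.
\item On the far piece the kernel is comparable to $\tau^{\frac{\alpha}{2}-1}(1+\tau)^{-\frac{1}{2}}$. Lemma~\ref{MainLemmaEuclidean} together with $\tau>2\sinh^2d(0,\xi)$ yields exactly the factor $\sinh^{\alpha-\frac{\mu_1+n}{p}}d(0,\xi)$, which $\nu_1$ absorbs.
\item The condition $\nu_1+\nu_2\le\alpha-1$ controls large $d(0,\xi)$.
\end{enumerate}

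\textbf{Necessity of $p(\alpha-1)\ge1$.} This cannot be obtained from Example~\ref{CounterExampleEndPoint1}, on which both you and the paper rely, and it is in fact false.
\begin{itemize}
\item The example uses $(b-a)^{\alpha-1}\le b^{\alpha-1}-a^{\alpha-1}$. This is reversed for $0<\alpha-1<1$, by subadditivity of $x\mapsto x^{\alpha-1}$.
\item Before that, the lower bound $(t^2-a^2)^{\frac{\alpha}{2}-1}\ge t^{\alpha-2}$ has already discarded the singularity.
\item Done correctly, testing $f_b$ at $\cosh d(0,\xi)=a$ gives $(b-a)^{\alpha/2}\lesssim_a(b-a)^{1/p}$. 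That forces only $p\alpha\ge2$, which is strictly weaker than $p(\alpha-1)\ge1$ when $1<\alpha<2$.
\end{itemize}

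Concretely, take $n=2$, $k=1$, $\alpha=\frac{3}{2}$, $\mu_1=-\frac{1}{5}$, $\mu_2=\nu_1=\nu_2=0$. All hypotheses and \eqref{EndPoint2NecessaryEquation} hold, with $p=\frac{8}{5}$ and $p(\alpha-1)=\frac{4}{5}<1$. Using $\sinh r\le\cosh r=\cosh d(0,\xi)\cosh x$, inequality \eqref{EndPointDesiredInequality2} for radial sets reduces to
\[
\int_F\sinh^{\frac{1}{2}}x\,\mathrm{d}x\le C\Big(\int_F\cosh^{-\frac{1}{5}}x\sinh x\,\mathrm{d}x\Big)^{\frac{5}{8}}.
\]
This holds because:
\begin{itemize}
\item the ratio of densities $\sinh^{-\frac{1}{2}}x\cosh^{\frac{1}{5}}x$ is decreasing, so intervals $(0,x_0)$ are extremal;
\item on $(0,x_0)$ the two sides behave like $x_0^{3/2}$ versus $x_0^{5/4}$ near $0$;
\item at infinity both sides behave like $e^{x_0/2}$.
\end{itemize}
So only the sufficiency half of the statement can be proved, and the repair sketched above uses just $p\alpha\ge2$.
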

		\begin{proof}
			The necessary part of the theorem was proved in Theorem \ref{EndPoint2Necessary} and Example \ref{CounterExampleEndPoint1}. We now show the sufficient part. To do so, we split our analysis into three cases.
			
			\textbf{Case I:} We first consider $\alpha \geq 2$ and $p \geq 1 - \mu_2$. We notice that since $\alpha \geq 2$, we have $\left( 1 - \frac{\sinh^2 d \left( 0, \xi \right)}{\sinh^2 t} \right)^{\frac{\alpha}{2} - 1} \leq 1$, for $t > d \left( 0, \xi \right)$. Therefore, using Equation \eqref{AlternateRepresentationsRAlpha2}, we get
			\begin{equation}
				\label{EndPoint2Case1Intermediate}
                \begin{aligned}
                    &\left| \sinh^{\nu_1} d \left( 0, \xi \right) \cosh^{\nu_2} d \left( 0, \xi \right) R_{\alpha}f \left( \xi \right) \right| \\
                    &\leq C \sinh^{\nu_1} d \left( 0, \xi \right) \cosh^{\nu_2 - \alpha + 1} d \left( 0, \xi \right) \int\limits_{d \left( 0, \xi \right)} \tilde{f} \left( \cosh t \right) \sinh^{\alpha - 1} t \ \mathrm{d}t.
                \end{aligned}
			\end{equation}
			Now, using Equation \eqref{MainLemmaEquation} of Lemma \ref{MainLemma}, Equation \eqref{EndPoint2NecessaryEquation} and $p \left( \alpha - 1 \right) = \mu_1 + \mu_2 + n - 1$, we get
			\begin{align*}
				\left| \sinh^{\nu_1} d \left( 0, \xi \right) \cosh^{\nu_2} d \left( 0, \xi \right) R_{\alpha}f \left( \xi \right) \right| &\leq C \| f \|_{L^{p, 1}_{\mu_1, \mu_2} \left( \mathbb{H}^n \right)}.
			\end{align*}
			
			\textbf{Case II:} We now consider $\alpha \geq 2$ and $p < 1 - \mu_2$. Here, we cannot use Lemma \ref{MainLemma}. To get around this complication, we split our proof into two parts.
			
			\textbf{Case II(A):} First, let us consider $d \left( 0, \xi \right) \geq 1$. In this case, we have that both hyperbolic sine and cosine (of $d \left( 0, \xi \right)$ and $t > d \left( 0, \xi \right)$) behave as exponentials. Therefore, we may write, using Equation \eqref{EndPoint2Case1Intermediate},
			\begin{align*}
				&\left| \sinh^{\nu_1} d \left( 0, \xi \right) \cosh^{\nu_2} d \left( 0, \xi \right) R_{\alpha}f \left( \xi \right) \right| \\
                &\leq C e^{\left( \nu_1 + \nu_2 - \alpha + 1 \right) d \left( 0, \xi \right)} \int\limits_{d \left( 0, \xi \right)}^{\infty} \tilde{f} \left( \cosh t \right) e^{\left( \alpha - 1 \right) t} \mathrm{d}t.
			\end{align*}
			Now, using Equation \eqref{MainLemmaEuclideanEquation2} of Lemma \ref{MainLemmaEuclidean}, Equation \eqref{EndPoint2NecessaryEquation} and the fact that both hyperbolic sine and cosine behave as exponentials, we get
			\begin{align*}
				&\left| \sinh^{\nu_1} d \left( 0, \xi \right) \cosh^{\nu_2} d \left( 0, \xi \right) R_{\alpha}f \left( \xi \right) \right| \\
                &\leq C \left( \int\limits_{\mathbb{R}} \tilde{f} \left( \cosh t \right) \sinh^{\mu_1 + n - 1} t \cosh^{\mu_2} t \tanh^{\mu_2} t \ \mathrm{d}t \right)^{\frac{1}{p}}.
			\end{align*}
			Now, we see that since $t > d \left( 0, \xi \right) > 1$ and $\mu_2 < 1 - p < 0$, $\tanh^{\mu_2} t \leq \tanh^{\mu_2} 1$. Hence, we get
			\begin{align*}
				\left| \sinh^{\nu_1} d \left( 0, \xi \right) \cosh^{\nu_2} d \left( 0, \xi \right) R_{\alpha}f \left( \xi \right) \right| \leq C \| f \|_{L^{p, 1}_{\mu_1, \mu_2} \left( \mathbb{H}^n \right)}.
			\end{align*}
			
			\textbf{Case II(B):} Next, let us consider $0 < d \left( 0, \xi \right) < 1$. In this case, we may ignore the terms involving powers of hyperbolic cosine of $d \left( 0, \xi \right)$ since they behave as constants. As in the proof of Theorem \ref{EndPoint1Sufficient}, we split the integral over $\left( d  \left( 0, \xi \right), \infty \right)$ into two integrals: one over $\left( d \left( 0, \xi \right), 1 \right)$ and another over $\left( 1, \infty \right)$. Once this is achieved, it is enough to estimate the integral over $\left( d \left( 0, \xi \right), 1 \right)$ since the other term can be bounded (upto a constant) by $\| f \|_{L^{p, 1}_{\mu_1, \mu_2} \left( \mathbb{H}^n \right)}$, using the techniques of Case II(A) and the fact that $\nu_1 \geq 0$.
			
			We, therefore, consider the following.
			\begin{align*}
				\int\limits_{d \left( 0, \xi \right)}^{1} \tilde{f} \left( \cosh t \right) \sinh^{\alpha - 1} t \ \mathrm{d}t \leq C \int\limits_{d \left( 0, \xi \right)}^{1} \tilde{f} \left( \cosh t \right) t^{\alpha - 1} \mathrm{d}t,
			\end{align*}
			since for small values of $t$, $\sinh t$ behaves as $t$. Then, using Equation \eqref{MainLemmaEuclideanEquation} of Lemma \ref{MainLemmaEuclidean} and the fact that $\sinh t$ behaves as $t$, we get
			\begin{align*}
				\int\limits_{d \left( 0, \xi \right)}^{1} \tilde{f} \left( \cosh t \right) \sinh^{\alpha - 1} t \ \mathrm{d}t \leq C \left( \int\limits_{d \left( 0, \xi \right)}^{1} \tilde{f} \left( \cosh t \right) \sinh^{\mu_1 + n - 1} t \sinh^{\mu_2 + p - 1} t \ \mathrm{d}t \right)^{\frac{1}{p}}.
			\end{align*}
			Since $p < 1 - \mu_2$, we get
			\begin{align*}
				\int\limits_{d \left( 0, \xi \right)}^{1} \tilde{f} \left( \cosh t \right) \sinh^{\alpha - 1} t \ \mathrm{d}t \leq C \sinh^{\frac{\mu_2 - 1}{p} + 1} d \left( 0, \xi \right) \| f \|_{L^{p, 1}_{\mu_1, \mu_2} \left( \mathbb{H}^n \right)}.
			\end{align*}
			That is, we have,
			\begin{align*}
				&\left| \sinh^{\nu_1} d \left( 0, \xi \right) \cosh^{\nu_2} d \left( 0, \xi \right) R_{\alpha}f \left( \xi \right) \right| \\
                &\leq C \left( \sinh^{\nu_1 + \frac{\mu_2 - 1}{p} - 1} d \left( 0, \xi \right) + 1 \right) \| f \|_{L^{p, 1}_{\mu_1, \mu_2} \left( \mathbb{H}^n \right)}.
			\end{align*}
			Due to Equation \eqref{EndPoint2NecessaryEquation}, we have $\nu_1 \geq \frac{\mu_1 + n}{p} - \alpha = \frac{1 - \mu_2}{p} - 1$. This gives us
			\begin{align*}
				\left| \sinh^{\nu_1} d \left( 0, \xi \right) \cosh^{\nu_2} d \left( 0, \xi \right) R_{\alpha}f \left( \xi \right) \right| &\leq C \| f \|_{L^{p, 1}_{\mu_1, \mu_2} \left( \mathbb{H}^n \right)}.
			\end{align*}
			
			\textbf{Case III:} Lastly, we take up the case when $1 < \alpha < 2$. This case is a bit tricky since we cannot eliminate the term $\left( 1 - \frac{\sinh^2 d \left( 0, \xi \right)}{\sinh^2 t} \right)^{\frac{\alpha}{2} - 1}$ as in the other two cases. To overcome this obstacle, we shall take another approach to studying the formula for the transform $R_{\alpha}$ (for radial functions). By substituting $t^2 = x \cosh^2 d \left( 0, \xi \right)$ in Equation \eqref{FormualRadialFunctionsEquation}, we get
			\begin{equation}
				\label{EndPoint2Case3Intermediate}
                \begin{aligned}
                    \left| \sinh^{\nu_1} d \left( 0, \xi \right) \cosh^{\nu_2} d \left( 0, \xi \right) R_{\alpha}f \left( \xi \right) \right| \leq &C \sinh^{\nu_1} d \left( 0, \xi \right) \cosh^{\nu_2} d \left( 0, \xi \right) \times \\
                    &\int\limits_{1}^{\infty} \tilde{f} \left( \sqrt{x} \cosh d \left( 0, \xi \right) \right) \left( x - 1 \right)^{\frac{\alpha}{2} - 1} x^{- \frac{1}{2}} \mathrm{d}x.
                \end{aligned}
			\end{equation}
			We now split our analysis in the following cases.
			
			\textbf{Case III(A):} First, let us consider $\mu_2 \geq 0$. Here, using Equation \eqref{MainLemma2Equation2} of Lemma \ref{MainLemma2} in Equation \eqref{EndPoint2Case3Intermediate}, we get
			\begin{align*}
				&\left| \sinh^{\nu_1} d \left( 0, \xi \right) \cosh^{\nu_2} d \left( 0, \xi \right) R_{\alpha}f \left( \xi \right) \right| \\
                &\leq C \sinh^{\nu_1} d \left( 0, \xi \right) \cosh^{\nu_2} d \left( 0, \xi \right) \times \\
                &\left( \int\limits_{1}^{\infty} \tilde{f} \left( \sqrt{x} \cosh d \left( 0, \right) \right) \left( x - 1 \right)^{\frac{p \left( \alpha - 1 \right) - 1}{2}} x^{- \frac{1}{2}} \mathrm{d}x \right)^{\frac{1}{p}}.
			\end{align*}
			We now substitute $x \cosh^2 d \left( 0, \xi \right) = \cosh^2 t$ and simplify to obtain
			\begin{align*}
				&\left| \sinh^{\nu_1} d \left( 0, \xi \right) \cosh^{\nu_2} d \left( 0, \xi \right) R_{\alpha}f \left( \xi \right) \right| \\
				&\leq C \sinh^{\nu_1} d \left( 0, \xi \right) \cosh^{\nu_2 - \alpha + 1} d \left( 0, \xi \right) \times \\
                &\left( \int\limits_{d \left( 0, \xi \right)}^{\infty} \tilde{f} \left( \cosh t \right) \left( \cosh^2 t - \cosh^2 d \left( 0, \xi \right) \right)^{\frac{p \left( \alpha - 1 \right) - 1}{2}} \sinh t \ \mathrm{d}t \right)^{\frac{1}{p}}.
			\end{align*}
			Since we have $p \left( \alpha - 1 \right) \geq 1$ and $\cosh^2 d \left( 0, \xi \right) \geq 1$, we get
			$$\left( \cosh^2 t - \cosh^2  \left( 0, \xi \right) \right)^{\frac{p \left( \alpha - 1 \right) - 1}{2}} \sinh t \leq \sinh^{p \left( \alpha - 1 \right)} t = \sinh^{\mu_1 + \mu_2 + n - 1} t$$.
			That is,
			\begin{align*}
				&\left| \sinh^{\nu_1} d \left( 0, \xi \right) \cosh^{\nu_2} d \left( 0, \xi \right) R_{\alpha}f \left( \xi \right) \right| \\
				&\leq C \sinh^{\nu_1} d \left( 0, \xi \right) \cosh^{\nu_2 - \alpha + 1} d \left( 0, \xi \right) \times \\
                &\left( \int\limits_{d \left( 0, \xi \right)}^{\infty} \tilde{f} \left( \cosh t \right) \sinh^{\mu_1 + n - 1} t \cosh^{\mu_2} t \tanh^{\mu_2} t \ \mathrm{d}t \right)^{\frac{1}{p}}.
			\end{align*}
			Since we have assumed that $\mu_2 \geq 0$, we get
			\begin{align*}
				&\left| \sinh^{\nu_1} d \left( 0, \xi \right) \cosh^{\nu_2} d \left( 0, \xi \right) R_{\alpha}f \left( \xi \right) \right| \\
                &\leq C \sinh^{\nu_1} d \left( 0, \xi \right) \cosh^{\nu_2 - \alpha + 1} d \left( 0, \xi \right) \| f \|_{L^{p, 1}_{\mu_1, \mu_2} \left( \mathbb{H}^n \right)}.
			\end{align*}
			Now, for large values of $d \left( 0, \xi \right)$, we have owing to Equation \eqref{EndPoint2NecessaryEquation} and the fact that both hyperbolic sine and cosine behave as exponentials,
			$$\sinh^{\nu_1} d \left( 0, \xi \right) \cosh^{\nu_2 - \alpha + 1} d \left( 0, \xi \right) \leq C e^{\left( \nu_1 + \nu_2 - \alpha + 1 \right) t} \leq C.$$
			On the other hand, for small values of $d \left( 0, \xi \right)$, $\cosh d \left( 0, \xi \right)$ behaves as a constant, and $\sinh^{\nu_1} d \left( 0, \xi \right) \leq C$, since $\nu_1 \geq 0$. This gives Inequality \eqref{EndPointDesiredInequality2} when $\mu_2 \geq 0$.
			
			\textbf{Case III(B):} We now consider $\mu_2 < 0$. We first take $\xi \in \Xi_k \left( \mathbb{H}^n \right)$, for which $d \left( 0, \xi \right) \geq 1$. Then, proceeding verbatim to Case III(A) above, we get
			\begin{align*}
				&\left| \sinh^{\nu_1} d \left( 0, \xi \right) \cosh^{\nu_2} d \left( 0, \xi \right) R_{\alpha}f \left( \xi \right) \right| \\
                &\leq C e^{\left( \nu_1 + \nu_2 - \alpha + 1 \right) d \left( 0, \xi \right)} \times \\
                &\left( \int\limits_{d \left( 0, \xi \right)}^{\infty} \tilde{f} \left( \cosh t \right) \sinh^{\mu_1 + n - 1} t \ \cosh^{\mu_2} t \tanh^{\mu_2} t \  \mathrm{d}t \right)^{\frac{1}{p}}.
			\end{align*}
			Again, due to Equation \eqref{EndPoint2NecessaryEquation}, we have $e^{\left( \nu_1 + \nu_2 - \alpha + 1 \right) d \left( 0, \xi \right)} \leq 1$. On the other hand, since $1 \leq d \left( 0, \xi \right) < t < \infty$, we get $\tanh 1 \leq \tanh t \leq 1$. Therefore, $\tanh^{\mu_2} t$ can also be dominated by a constant and we get,
			\begin{align*}
				\left| \sinh^{\nu_1} d \left( 0, \xi \right) \cosh^{\nu_2} d \left( 0, \xi \right) R_{\alpha}f \left( \xi \right) \right| \leq C \| f \|_{L^{p, 1}_{\mu_1, \mu_2} \left( \mathbb{H}^n \right)}.
			\end{align*}
			Now, let us consider $\xi \in \Xi_k \left( \mathbb{H}^n \right)$ so that $0 < d \left( 0, \xi \right) < 1$. Here, we comment that terms involving $\cosh d \left( 0, \xi \right)$ may be replaced by a constant. Now, we split the integral into two parts as follows.
			\begin{align*}
				&\left| \sinh^{\nu_1} d \left( 0, \xi \right) \cosh^{\nu_2} d \left( 0, \xi \right) R_{\alpha}f \left( \xi \right) \right| \\
				&\leq C \sinh^{\nu_1} d \left( 0, \xi \right) \left[ \int\limits_{\cosh d \left( 0, \xi \right)}^{\cosh 1} \tilde{f} \left( t \right) \left( t^2 - \cosh^2 d \left( 0, \xi \right) \right)^{\frac{\alpha}{2} - 1} \mathrm{d}t \right. \\
                &\left. + \int\limits_{\cosh 1}^{\infty} \tilde{f} \left( t \right) \left( t^2 - \cosh^2 t \right)^{\frac{\alpha}{2} - 1} \mathrm{d}t \right].
			\end{align*}
			From the analysis done for $d \left( 0, \xi \right) \geq 1$, we have that
			$$\int\limits_{1}^{\infty} \tilde{f} \left( t \right) \left( t^2 - \cosh^2 t \right)^{\frac{\alpha}{2} - 1} \mathrm{d}t \leq C \| f \|_{L^{p, 1}_{\mu_1, \mu_2} \left( \mathbb{H}^n \right)}.$$
			We now consider the first term above. As in Case III(A), let us substitute $t^2 = x \cosh^2 d \left( 0, \xi \right)$ and simplify (by replacing powers of $\cosh d \left( 0, \xi \right)$ with constants wherever required) to obtain
			\begin{align*}
				&\sinh^{\nu_1} d \left( 0, \xi \right) \int\limits_{\cosh d \left( 0, \xi \right)}^{\cosh 1} \tilde{f} \left( t \right) \left( t^2 - \cosh^2 d \left( 0, \xi \right) \right)^{\frac{\alpha}{2} - 1} \mathrm{d}t \\
                &\leq C \sinh^{\nu_1} d \left( 0, \xi \right) \int\limits_{1}^{\frac{\cosh^2 1}{\cosh^2 d \left( 0, \xi \right)}} \tilde{f} \left( \sqrt{x} \cosh d \left( 0, \xi \right) \right) \left( x - 1 \right)^{\frac{\alpha}{2} - 1} x^{- \frac{1}{2}} \mathrm{d}x.
			\end{align*}
			Now, using Equation \eqref{MainLemma2Equation1} of Lemma \ref{MainLemma2}, we get
			\begin{align*}
				&\sinh^{\nu_1} d \left( 0, \xi \right) \int\limits_{\cosh d \left( 0, \xi \right)}^{\cosh 1} \tilde{f} \left( t \right) \left( t^2 - \cosh^2 d \left( 0, \xi \right) \right)^{\frac{\alpha}{2} - 1} \mathrm{d}t \\
				&\leq C \sinh^{\nu_1} d \left( 0, \xi \right) \left( \int\limits_{1}^{\frac{\cosh^2 1}{\cosh^2 d \left( 0, \xi \right)}} \tilde{f} \left( \sqrt{x} \cosh d \left( 0, \xi \right) \right) \left( x - 1 \right)^{\frac{p \left( \alpha - 1 \right) + p}{2} - 1} x^{-\frac{1}{2}} \mathrm{d}x \right)^{\frac{1}{p}}.
			\end{align*}
			Now, we substitute $x \cosh^2 d \left( 0, \xi \right) = \cosh^2 t$ and simplify to obtain
			\begin{align*}
				&\sinh^{\nu_1} d \left( 0, \xi \right) \int\limits_{\cosh d \left( 0, \xi \right)}^{\cosh 1} \tilde{f} \left( t^2 - \cosh^2 d \left( 0, \xi \right) \right)^{\frac{\alpha}{2} - 1} \mathrm{d}t \\
				&\leq C \sinh^{\nu_1} d \left( 0, \xi \right) \cosh^{\frac{1}{p'} - \alpha + 1} d \left( 0, \xi \right) \times \\
                &\left( \int\limits_{d \left( 0, \xi \right)}^{1} \tilde{f} \left( t \right) \left( \cosh^2 t - \cosh^2 d \left( 0, \xi \right) \right)^{\frac{p \left( \alpha - 1 \right) - 1 + p - 1}{2}} \sinh t \ \mathrm{d}t \right)^{\frac{1}{p}}.
			\end{align*}
			Observing that we have $p \left( \alpha - 1 \right) \geq 1$ and $p \geq 1$, and that $\cosh d \left( 0, \xi \right) \geq 1$ behaves as a constant, we get
			\begin{align*}
				&\sinh^{\nu_1} d \left( 0, \xi \right) \int\limits_{\cosh d \left( 0, \xi \right)}^{\cosh 1} \tilde{f} \left( t \right) \left( t^2 - \cosh^2 d \left( 0, \xi \right) \right)^{\frac{\alpha}{2} - 1} \mathrm{d}t \\
                &\leq C \sinh^{\nu_1} d \left( 0, \xi \right) \left( \int\limits_{d \left( 0, \xi \right)}^{1} \tilde{f} \left( t \right) \sinh^{\mu_1 + n - 1} t \sinh^{\mu_2 + p - 1} t \ \mathrm{d}t \right)^{\frac{1}{p}}.
			\end{align*}
			We notice that since $\nu_1 \geq 0$, we have $\sinh^{\nu_1} d \left( 0, \xi \right) \leq \sinh^{\nu_1} t$, whenever $t \geq d \left( 0, \xi \right)$. Also, due to the necessary conditions mentioned in Equation \eqref{EndPoint2NecessaryEquation} and the fact that $p = \frac{\mu_1 + \mu_2 + n - 1}{\alpha - 1}$, we have $\nu_1 \geq \frac{1 - \mu_2}{p} - 1$. Combining these observations together with the fact that for small values of $t$, $\cosh t$ behaves as a constant, we get,
			\begin{align*}
				&\sinh^{\nu_1} d \left( 0, \xi \right) \int\limits_{\cosh d \left( 0, \xi \right)}^{\cosh 1} \tilde{f} \left( t \right) \left( t^2 - \cosh^2 d \left( 0, \xi \right) \right)^{\frac{\alpha}{2} - 1} \mathrm{d}t \\
                &\leq C \left( \int\limits_{d \left( 0, \xi \right)}^{1} \tilde{f} \left( t \right) \sinh^{\mu_1 + n - 1} t \cosh^{\mu_2} t \sinh^{p\nu_1 + \mu_2 - 1 + p} t \ \mathrm{d}t \right)^{\frac{1}{p}} \\
				&\leq C \| f \|_{L^{p, 1}_{\mu_1, \mu_2} \left( \mathbb{H}^n \right)}.
			\end{align*}
			This completes the proof!
		\end{proof}
	\section{Weighted \texorpdfstring{$L^p$}{Lp}-\texorpdfstring{$L^p$}{Lp} boundedness}
		\label{LPLPBoundednessSection}
		In this section, we study the $L^p$-mapping properties of the transform $R_{\alpha}$ for generic functions in the weighted Lebesgue space. Particularly, we ask the following question.
		\begin{question}
			\label{LPLPBoundednessQuestion}
			What are the admissible values of $\alpha \geq 1$, $\mu_1, \mu_2, \nu_1, \nu_2 \in \mathbb{R}$ and $p \geq 1$ so that the operator $R_{\alpha}$ is bounded from $L^p_{\mu_1, \mu_2} \left( \mathbb{H}^n \right)$ to $L^p_{\nu_1, \nu_2} \left( \Xi_k \left( \mathbb{H}^n \right) \right)$?
		\end{question}
		Indeed, to talk about the boundedness of the transform $R_{\alpha}$, we first require its existence. Hence, we shall consider the basic assumption that $\alpha \geq 1$, $\mu_1 + \mu_2 > \alpha - n$ and $1 \leq p < \frac{\mu_1 + \mu_2 + n -  1}{\alpha - 1}$, without which Question \ref{LPLPBoundednessQuestion} does not make sense. Further, we notice that for the $L^p$-$L^p$ boundedness to hold for generic functions, we require such boundedness for radial functions as well. That is, the necessary conditions mentioned in Theorem \ref{LPImprovingNecessary} with $r = p$ are also necessary in this case. Particularly, by using $r = p$ in Equation \eqref{LPImprovingNecessaryEquation}, we get
		\begin{equation}
			\label{LPLPNecesssaryEquation}
			\nu_1 > k - n, \nu_1 \geq \mu_1 + k - p \alpha \text{ and } \nu_1 + \nu_2 \leq \mu_1 + \mu_2.
		\end{equation}
		With the necessary conditions made clear, we now proceed to see their sufficiency. We first derive a ``formula" for the transform $R_{\alpha}$ acting on generic functions. For the same, we require the following representation of X-ray transform in $\mathbb{H}^2$ due to Kurusa (see Lemma 1.1 in \cite{KurusaHn}).
		\begin{theorem}[\cite{KurusaHn}]
			\label{XRayH2Formula}
			Let $f: \mathbb{H}^2 \rightarrow \mathbb{C}$ be a function that is integrable on almost every geodesic in $\mathbb{H}^2$, where $\mathbb{H}^2$ is the real hyperbolic space of dimension $2$ described by the Poincar\'{e} ball model. We parametrize a geodesic $\xi$ by $\left( \bar{\omega}, h \right) \in \mathbb{S}^{1} \times \left( 0, \infty \right)$, where $\bar{\omega}$ {is} the direction of the foot of the perpendicular to $\xi$, and $h$ is the hyperbolic distance of $\xi$ from $0$, the hyperbolic origin. Then, the X-ray transform of $f$ is given by
			\begin{equation}
				\label{XRayH2}
				\begin{aligned}
					&Rf \left( \bar{\omega}, h \right) = \int\limits_{{\mathbb{S}^{1}_{\left\langle \omega, \bar{\omega} \right\rangle > \tanh h}}} f \left( \omega, \frac{1}{2} \tanh^{-1} \left( \frac{\tanh h}{\left\langle \omega, \bar{\omega} \right\rangle} \right) \right) \frac{\left( \coth^2 h \left\langle \omega, \bar{\omega} \right\rangle^2 - 1 \right)^{-1}}{\sinh h} \ \mathrm{d}\omega,
				\end{aligned}
			\end{equation}
			where $\mathbb{S}^{1}_{\left\langle \omega, \bar{\omega} \right\rangle > \tanh h} = \left\lbrace \omega \in \mathbb{S}^1 | \left\langle \omega, \bar{\omega} \right\rangle > \tanh h \right\rbrace$.
		\end{theorem}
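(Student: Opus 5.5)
The plan is to parametrize the geodesic $\xi$ by the angle at the origin under which its points are seen, and then change variables from hyperbolic arclength on $\xi$ to that angle. Fix $\xi$ with parameters $(\bar{\omega}, h)$, so that $u_{\xi}$ lies in direction $\bar{\omega}$ at hyperbolic distance $h$ from $0$. By Theorem \ref{FootOfPerpendicularTGS}, the geodesic from $0$ to $u_{\xi}$ meets $\xi$ orthogonally. Hence, for every $x \in \xi$, the triangle $0, u_{\xi}, x$ is right-angled at $u_{\xi}$, exactly as in Figure \ref{GeodesicTriangleHnFigure}. Write $x$ in polar coordinates about $0$ as $(\omega, \rho)$, and let $\theta$ be the angle at $0$, so that $\cos\theta = \left\langle \omega, \bar{\omega} \right\rangle$. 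Let $s$ be the signed arclength $d(u_{\xi}, x)$ along $\xi$. The integral defining $Rf(\bar{\omega}, h)$ is then $\int_{\mathbb{R}} f(x(s))\, \mathrm{d}s$, and the goal is to rewrite it as an integral over $\omega \in \mathbb{S}^1$.

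First, I will record two relations for a hyperbolic right triangle. Besides the Pythagorean relation \eqref{PythagorasTheoremHnEquation}, we have $\tanh h = \tanh \rho \cos\theta$ (adjacent leg versus hypotenuse) and $\tanh s = \sinh h \tan\theta$ (opposite leg). Both follow from Theorem \ref{CosineRuleHn} together with the hyperbolic sine rule, or directly from the hyperboloid model; this is the one ingredient not already stated in the paper.

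The first relation gives $\rho = \tanh^{-1}\left( \tanh h / \left\langle \omega, \bar{\omega} \right\rangle \right)$. This is finite exactly when $\left\langle \omega, \bar{\omega} \right\rangle > \tanh h$, which produces the domain of integration $\mathbb{S}^1_{\left\langle \omega, \bar{\omega} \right\rangle > \tanh h}$. The map $s \mapsto \theta$ is a bijection from $\mathbb{R}$ onto the open arc $|\theta| < \theta_0$, where $\cos\theta_0 = \tanh h$; this corresponds to the two ends of $\xi$ approaching the boundary. The factor $\frac{1}{2}$ in the radial argument of the formula reflects the normalization of the radial coordinate in the Poincar\'{e} ball, where the Euclidean radius is $\tanh(\rho/2)$. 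I will simply fix the convention so that the second slot of $f$ matches this.

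Second, I compute the Jacobian by differentiating $\tanh s = \sinh h \tan\theta$:
$$\mathrm{d}s = \frac{\sinh h \sec^2\theta}{1 - \sinh^2 h \tan^2\theta}\, \mathrm{d}\theta = \frac{\sinh h}{\cos^2\theta - \sinh^2 h \sin^2\theta}\, \mathrm{d}\theta.$$
Using $\sin^2\theta = 1 - \cos^2\theta$, the denominator becomes $\cosh^2 h \cos^2\theta - \sinh^2 h$. Dividing through by $\sinh^2 h$ shows that this Jacobian equals $\left( \coth^2 h \left\langle \omega, \bar{\omega} \right\rangle^2 - 1 \right)^{-1} / \sinh h$, which is precisely the weight in \eqref{XRayH2}. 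Identifying $\mathrm{d}\theta$ with arclength $\mathrm{d}\omega$ on $\mathbb{S}^1$ then completes the formula.

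The main obstacle is bookkeeping rather than analysis. One must verify the leg relation $\tanh s = \sinh h \tan\theta$ and check that the map $s \mapsto \omega$ is a diffeomorphism onto the arc. One must also pin down the radial-coordinate convention, so that the $\frac{1}{2}\tanh^{-1}$ argument is consistent with how $f(\omega, \cdot)$ is read. Integrability on almost every geodesic justifies the change of variables by Tonelli's theorem applied to $|f|$.
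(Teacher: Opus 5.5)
Your derivation is correct. The paper gives no proof of this statement of its own: it imports it as Lemma 1.1 of \cite{KurusaHn}. Your argument is therefore a self-contained verification rather than an alternative to something in the text. It uses the right-angle relations $\tanh h = \tanh\rho\cos\theta$ and $\tanh s = \sinh h\tan\theta$, followed by the one-variable substitution $s \mapsto \theta$.

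The relations follow as you indicate:
\begin{itemize}
\item The cosine rule \eqref{CosineRuleHnEquation} at the vertex $0$, combined with \eqref{PythagorasTheoremHnEquation}, gives $\sinh\rho\cos\theta = \sinh h\cosh s$. Dividing by $\cosh\rho = \cosh h\cosh s$ yields the first relation.
\item Dividing the sine rule $\sin\theta = \sinh s/\sinh\rho$ by this expression for $\cos\theta$ yields the second.
\end{itemize}
Your Jacobian $\sinh h/(\cosh^2 h\cos^2\theta - \sinh^2 h)$ is right, and it is positive precisely on the arc $\langle\omega,\bar\omega\rangle > \tanh h$.

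Two points should be tightened.

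\textbf{The factor $\frac12$.} The weight does not depend on how the radial slot of $f$ is read, but the slot itself does, so ``fixing the convention'' should be made explicit. Your computation places the point of $\xi$ in direction $\omega$ at hyperbolic distance $\rho = \tanh^{-1}(\tanh h/\langle\omega,\bar\omega\rangle)$. Hence the displayed $\frac{1}{2}\tanh^{-1}(\cdot)$ is correct exactly when $f(\omega,\tau)$ denotes the ball point $\tanh(\tau)\,\omega$, so that $\tau = \rho/2$. It is not correct under either of the other natural readings:
\begin{itemize}
\item if $\tau$ is the Euclidean radius, the slot would be $\tanh(\rho/2)$;
\item under the convention $f(\omega,t) = f(\exp_0(t\omega))$ of Proposition \ref{PolarDecompositionTheorem}, the slot is $\rho$ itself.
\end{itemize}
The last form is in fact the one the paper uses when it applies this result in the proof of Theorem \ref{FormulaGeneralFunctions}, where the $\frac12$ silently disappears; your derivation confirms that this is the right form there.

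\textbf{Tonelli.} Tonelli's theorem plays no role. For each fixed geodesic on which $f$ is integrable, the identity is just the substitution rule for the increasing $C^1$ bijection $s \mapsto \theta$ from $\mathbb{R}$ onto $(-\theta_0,\theta_0)$.
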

		Using Theorem \ref{XRayH2Formula}, we get a representation of $R_{\alpha}f$ for functions $f$ that may not necessarily be radial.
		\begin{theorem}
			\label{FormulaGeneralFunctions}
			Let $f: \mathbb{H}^n \rightarrow \mathbb{C}$ be a ``nice" function. Then, for any $\left( \xi, u \right) \in \Xi_k \left( \mathbb{H}^n \right)$ with $u \neq 0$, we have,
			\begin{equation}
				\label{FormulaGeneralFunctionsEquation1}
                \begin{aligned}
                    R_{\alpha}f \left( \xi, u \right) = &\int\limits_{\mathbb{S}^k_{\xi \oplus u}, \left\langle \omega, \frac{u}{\| u \|} \right\rangle > \tanh \| u \|} f \left( \omega, \tanh^{-1} \left( \frac{\tanh \| u \|}{\left\langle \omega, \frac{u}{\| u \|} \right\rangle} \right) \right) \times \\
                    &\frac{\left( \coth^2 \| u \| \left\langle \omega, \frac{u}{\| u \|} \right\rangle^2 - 1 \right)^{- \left( \frac{\alpha + 1}{2} \right)} \left( 1 - \left\langle \omega, \frac{u}{\| u \|} \right\rangle^2 \right)^{\frac{\alpha - k}{2}}}{\sinh \| u \|} \mathrm{d}\omega,
                \end{aligned}
			\end{equation}
			where, $\xi \oplus u$ denotes the $\left( k + 1 \right)$-dimensional subspace of $\mathbb{R}^n$ spanned by $\xi$ and $u$ and
			$$\mathbb{S}^{k}_{\xi \oplus u, \left\langle \omega, \frac{u}{\| u \|} \right\rangle > \tanh \| u \|} := \left\lbrace \omega \in \mathbb{S}^{n - 1} \cap \xi \oplus u \Bigg| \left\langle \omega, \frac{u}{\| u \|} \right\rangle > \tanh \| u \| \right\rbrace$$
			Further, if $g \in SO \left( n \right)$ and $h > 0$ are such that $\left( \xi, u \right) = \left( g \cdot \mathbb{R}^k, hg \cdot e_{k + 1} \right)$, then, we have
			\begin{equation}
				\label{FormulaGeneralFunctionsEquation2}
                \begin{aligned}
                    R_{\alpha}f \left( \xi, u \right) = &\int\limits_{\mathbb{S}^k_{\left\langle \omega, e_{k + 1} \right\rangle > \tanh h}} f_g \left( \omega, \tanh^{-1} \left( \frac{\tanh h}{\left\langle \omega, e_{k + 1} \right\rangle} \right) \right) \times \\
                    &\frac{\left( \coth^2 h \left\langle \omega, e_{k + 1} \right\rangle^2 - 1 \right)^{- \left( \frac{\alpha + 1}{2} \right)} \left( 1 - \left\langle \omega, e_{k + 1} \right\rangle^2 \right)^{\frac{\alpha - k}{2}}}{\sinh h} \mathrm{d}h,    
                \end{aligned}
			\end{equation}
			where, $f_g \left( x \right) = f \left( g \cdot x \right)$.
		\end{theorem}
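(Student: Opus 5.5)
The approach is to reduce the computation to the $2$-dimensional X-ray formula of Theorem \ref{XRayH2Formula} by slicing the $k$-plane. First I will use Proposition \ref{DesriptionXik} to write $\xi$ (now as a totally geodesic submanifold) in the form $(\xi,u)$ with $u\in\xi^{\perp}$, $\|u\|=h$ in the sense that $d(0,\xi)=h$, and $u_{\xi}$ the point in direction $u/\|u\|$ at distance $h$. By $SO(n)$-invariance of the measure and of $d(0,\cdot)$, I pick $g\in SO(n)$ with $\xi=g\cdot\mathbb{R}^k$, $u=hg\cdot e_{k+1}$. Then $R_\alpha f(\xi,u)=R_\alpha f_g(\mathbb{R}^k,he_{k+1})$, since $g$ maps the submanifold, its induced measure and its foot of perpendicular to the corresponding objects. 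So \eqref{FormulaGeneralFunctionsEquation2} implies \eqref{FormulaGeneralFunctionsEquation1} after the substitution $\omega\mapsto g\omega$ on the sphere $\mathbb{S}^{n-1}\cap(\xi\oplus u)$. It therefore suffices to treat the model case $\xi_0=\{$plane with foot $u_0$ at distance $h$ along $e_{k+1}$, contained in $\mathbb{H}^{k+1}=\mathbb{H}^n\cap\operatorname{span}(e_1,\dots,e_{k+1})\}$.

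In the model case I work inside the totally geodesic $\mathbb{H}^{k+1}$ in the Poincar\'{e} ball. I apply polar coordinates on $\xi_0$ centred at $u_0$ (Proposition \ref{PolarDecompositionTheorem}, Remark \ref{SmallSubmanifoldsSame}): $x=\exp_{u_0}(r\sigma)$ with $\sigma\in\mathbb{S}^{k-1}$ tangent to $\xi_0$. This gives $R_\alpha f(\xi_0)=\int_{\mathbb{S}^{k-1}}\int_0^\infty f(\exp_{u_0}(r\sigma))\sinh^{\alpha-1}r\,\mathrm{d}r\,\mathrm{d}\sigma$. For fixed $\sigma$, the geodesic ray $r\mapsto\exp_{u_0}(r\sigma)$ lies in the $2$-dimensional totally geodesic plane $\mathbb{H}^2_\sigma$ spanned by $e_{k+1}$ and $\sigma$. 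Within that plane it is half of the geodesic with foot direction $e_{k+1}$ and distance $h$. Next I reparametrize each point by its Euclidean-ball polar coordinates $(\omega,\rho)$ about the origin $0$, with $\omega\in\mathbb{S}^1_\sigma\subset\mathbb{S}^k\cap\operatorname{span}(\sigma,e_{k+1})$. This is the change of variables carried out in Kurusa's Lemma (Theorem \ref{XRayH2Formula}). The right-triangle relations from Theorem \ref{CosineRuleHn} give it explicitly: $\tanh d(0,x)\,\langle\omega,e_{k+1}\rangle=\tanh h$, which identifies the radial coordinate as $\tanh^{-1}(\tanh h/\langle\omega,e_{k+1}\rangle)$ (with the paper's normalisation of radius). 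They also give $\sinh r=\tanh\rho'$-type expressions. Concretely, $\cosh d(0,x)=\cosh h\cosh r$ together with $\tanh r=\sinh d(0,x)\sin\theta$-type identities should yield
\[
\sinh r=\frac{\sqrt{1-\langle\omega,e_{k+1}\rangle^2}}{\sqrt{\coth^2h\,\langle\omega,e_{k+1}\rangle^2-1}},
\]
and $\mathrm{d}r=\frac{(\coth^2h\langle\omega,e_{k+1}\rangle^2-1)^{-1}}{\sinh h}\,\mathrm{d}\theta$, the Jacobian in Theorem \ref{XRayH2Formula}.

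Multiplying the Kurusa Jacobian by $\sinh^{\alpha-1}r$ and then by the Jacobian of the spherical measure will produce the exponents $-(\alpha+1)/2$ and $(\alpha-k)/2$. The latter Jacobian passes from $(\sigma,\theta)\in\mathbb{S}^{k-1}\times(0,\theta_{\max})$ to $\omega=\cos\theta\,e_{k+1}+\sin\theta\,\sigma\in\mathbb{S}^k$, namely $\mathrm{d}\omega=\sin^{k-1}\theta\,\mathrm{d}\theta\,\mathrm{d}\sigma$, with $\sin\theta=\sqrt{1-\langle\omega,e_{k+1}\rangle^2}$. Indeed, $\sinh^{\alpha-1}r\cdot\sin^{-(k-1)}\theta$ contributes $(1-\langle\omega,e_{k+1}\rangle^2)^{(\alpha-1)/2-(k-1)/2}$ and $(\coth^2h\langle\cdot\rangle^2-1)^{-(\alpha-1)/2}$. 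Combined with the factor $(\coth^2h\langle\cdot\rangle^2-1)^{-1}/\sinh h$, this gives exactly the integrand of \eqref{FormulaGeneralFunctionsEquation2}. The range $r\in(0,\infty)$ corresponds to $\langle\omega,e_{k+1}\rangle>\tanh h$.

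The main obstacle is verifying the precise form of $\sinh r$ in terms of $\langle\omega,e_{k+1}\rangle$ and $h$, and matching it with Kurusa's normalisation of the radial coordinate. The paper's formulas mix $\tanh^{-1}$ and $\frac12\tanh^{-1}$, reflecting the ball's factor $2$. I would settle this by computing in the hyperboloid model, where the plane is $\{x:\langle x,e_{k+1}\rangle_{\mathrm{Eucl}}\cosh h=x_{n+1}\sinh h\}$, and then translating to the ball. Care is also needed with the single-sidedness of the geodesic ray versus the full geodesic; each $\omega$ with $\langle\omega,e_{k+1}\rangle>\tanh h$ corresponds to exactly one point of $\xi_0$.
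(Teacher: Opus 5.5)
Your proposal is correct and follows essentially the paper's route: polar coordinates on $\xi$ centred at the foot $u$, slicing into the $2$-dimensional totally geodesic planes through $0$ spanned by $e_{k+1}$ and $\sigma$, Kurusa's change of variables from arclength to the angle at $0$, and then the polar decomposition of $\mathbb{S}^k$ about $e_{k+1}$. The paper folds the rays into full geodesics via $SO_k(\xi)/\{I,R_{\sigma_0}\}$ and cites Theorem \ref{XRayH2Formula}, whereas you work ray by ray, which is equivalent. Your formulas for $\sinh r$ and $\mathrm{d}r$ are right and follow at once from the right-triangle identity $\tanh r=\sinh h\tan\theta$, which settles the ``main obstacle'' you flag; note that the identity you half-quote, $\tanh r=\sinh d(0,x)\sin\theta$, should read $\sinh r=\sinh d(0,x)\sin\theta$.
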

		\begin{proof}
			We begin by using the polar decomposition on a $k$-dimensional totally geodesic submanifold $\left( \xi, u \right)$ by keeping $u$ as the center of the decomposition. Then, we get,
			\begin{align*}
				R_{\alpha}f \left( \xi, u \right) &= \int\limits_{\mathbb{S}^{n - 1} \cap \xi} \int\limits_{0}^{\infty} f \left( \exp_u \left( t\sigma \right) \right) \sinh^{\alpha - 1} t \ \mathrm{d}t \\
				&= \left| \mathbb{S}^{n - 1} \cap \xi \right| \int\limits_{SO_k \left( \xi \right)} \int\limits_{0}^{\infty} f_g \left( \exp_u \left( t\sigma_0 \right) \right) \sinh^{\alpha - 1} t \ \mathrm{d}t,
			\end{align*}
			where, $SO_k \left( \xi \right)$ is the rotation group of the subspace $\xi$, and $\sigma_0 \in \mathbb{S}^{n - 1} \cap \xi$ is a fixed direction. Let $R_{\sigma_0} \in SO_k \left( \xi \right)$ be a rotation that takes $\sigma_0$ to $-\sigma_0$. Then, it is easy to observe that
			\begin{align*}
				R_{\alpha}f \left( \xi, u \right) &= \left| \mathbb{S}^{n - 1} \cap \xi \right| \int\limits_{\faktor{SO_k \left( \xi \right)}{\left\lbrace I, R_{\sigma_0} \right\rbrace}} \int\limits_{- \infty}^{\infty} f_g \left( \exp_u \left( t \sigma_0 \right) \right) \sinh^{\alpha - 1} t \ \mathrm{d}t.
			\end{align*}
			We now observe that the inner integral is the X-ray transform of the function $f_g \left( \cdot \right) \sinh^{\alpha - 1} d \left( u, \cdot \right)$ along the geodesic through $u$ in the direction $\sigma_0$. Then, using Equation \eqref{XRayH2} of Theorem \ref{XRayH2Formula}, and simplifying, we get
			\begin{align*}
				&R_{\alpha}f \left( \xi, u \right) \\
                &= \left| \mathbb{S}^{n - 1} \cap \xi \right| \int\limits_{\faktor{SO_k \left( \xi \right)}{\left\lbrace I, R_{\sigma_0} \right\rbrace}} \int\limits_{\mathbb{S}^1_{\left\langle \omega, \frac{u}{\| u \|} \right\rangle > \tanh \| u \|}} f_g \left( \omega, \tanh^{-1} \left( \frac{\tanh \| u \|}{\left\langle \omega, \frac{u}{\| u \|} \right\rangle} \right) \right)  \times \\
				&\left( \frac{\tanh \| u \|}{\left\langle \omega, \frac{u}{\| u \|} \right\rangle} \right)^{\alpha - 1} \left( 1 - \left( \frac{\tanh^2 \| u \|}{\left\langle \omega, \frac{u}{\| u \|} \right\rangle} \right)^2 \right)^{- \frac{\alpha - 1}{2}} \times \\
                &\left( \cosh^2 \| u \| \left\langle \omega, \frac{u}{\| u \|} \right\rangle^2 - 1 \right)^{-1} \frac{\left( 1 - \left\langle \omega, \frac{u}{\| u \|} \right\rangle^2 \right)^{\frac{\alpha - 1}{2}}}{\sinh \| u \|} \mathrm{d}\omega \mathrm{d}g.
			\end{align*}
			Equation \eqref{FormulaGeneralFunctionsEquation1} is obtained at once by using the polar decomposition on the sphere $\mathbb{S}^k_{\xi \oplus u}$ with respect to $\frac{u}{\| u \|}$. Also, Equation \eqref{FormulaGeneralFunctionsEquation2} is only a restatement of Equation \eqref{FormulaGeneralFunctionsEquation1} by using isometries of $\mathbb{H}^n$.
		\end{proof}
		We are now in a position to state and prove the weighted $L^p$-$L^p$ boundedness of the operator $R_{\alpha}$.
		\begin{theorem}
			\label{LPLPBoundednessTheorem}
			Let $\alpha \geq 1$, $\mu_1 + \mu_2 > \alpha - n$ and $1 \leq p \leq \frac{\mu_1 + \mu_2 + n + 1}{\alpha + 1}$. Then, we have the following.
			\begin{enumerate}
				\item[\mylabel{LPLPCase1}{(A)}] If $\nu_1, \nu_2 \in \mathbb{R}$ are such that $\nu_1 + \nu_2 > p - n - 1$, then the operator $R_{\alpha}$ is bounded from $L^p_{\mu_1, \mu_2} \left( \mathbb{H}^n \right)$ to $L^p_{\nu_1, \nu_2} \left( \Xi_k \left( \mathbb{H}^n \right) \right)$ if $\nu_1$ and $\nu_2$ satisfy Equation \eqref{LPLPNecesssaryEquation} and we have $\nu_1 + \nu_2 < p \left( \alpha + 1 \right) - n - 1$. Moreover, in this case, we have,
				\begin{equation}
					\label{RAlphaOperatorNormLPLPCase1}
                    \begin{aligned}
                        &\| R_{\alpha} \|_{L^p_{\mu_1, \mu_2} \left( \mathbb{H}^n \right) \rightarrow L^p_{\nu_1, \nu_2} \left( \Xi_k \left( \mathbb{H}^n \right) \right)} \\
                        &\leq \left( \frac{\left| \mathbb{S}^{n - k - 1} \right|}{\left| \mathbb{S}^{n - 1} \right|} \right)^{\frac{1}{p}} \left| \mathbb{S}^{k - 1} \right| \frac{\Gamma \left( \frac{\nu_1 + n - k}{2p} \right) \Gamma \left( \frac{\alpha + 1}{2} - \frac{\nu_1 + \nu_2 + n + 1}{2p} \right)}{\Gamma \left( \frac{\alpha + 1}{2} + \frac{\nu_2 + k + 1}{2p} \right)}.
                    \end{aligned}
				\end{equation}
				\item[\mylabel{LPLPCase2}{(B)}] If $\nu_1, \nu_2 \in \mathbb{R}$ are such that $\nu_1 + \nu_2 \leq p - n - 1$, then the operator $R_{\alpha}$ is bounded from $L^p_{\mu_1, \mu_2} \left( \mathbb{H}^n \right)$ to $L^p_{\nu_1, \nu_2} \left( \Xi_k \left( \mathbb{H}^n \right) \right)$ if and only if $\nu_1$ and $\nu_2$ satisfy the conditions mentioned in Equation \eqref{LPLPNecesssaryEquation}. Moreover, in this case, we have,
				\begin{equation}
					\label{RAlphaOperatorNormLPLPCase2}
					\| R_{\alpha} \|_{L^p_{\mu_1, \mu_2} \left( \mathbb{H}^n \right) \rightarrow L^p_{\nu_1, \nu_2} \left( \Xi_k \left( \mathbb{H}^n \right) \right)} \leq \left( \frac{\left| \mathbb{S}^{n - k - 1} \right|}{\left| \mathbb{S}^{n - 1} \right|} \right)^{\frac{1}{p}} \left| \mathbb{S}^{k - 1} \right| \frac{\Gamma \left( \frac{\nu_1 + n - k}{2p} \right) \Gamma \left( \frac{\alpha}{2} \right)}{\Gamma \left( \frac{\alpha}{2} + \frac{\nu_1 + n - k}{2p} \right)}.
				\end{equation}
			\end{enumerate}
		\end{theorem}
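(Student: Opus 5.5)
We propose to prove Theorem \ref{LPLPBoundednessTheorem} by Minkowski's integral inequality, taking the spherical representation of Theorem \ref{FormulaGeneralFunctions} as the starting point. We use Proposition \ref{DesriptionXik} to write $\xi = (g\cdot\mathbb{R}^k, h g\cdot e_{k+1})$ with $g \in SO(n)$ and $h = d(0,\xi) > 0$. Then the measure \eqref{XiKMeasure} becomes $\frac{|\mathbb{S}^{n-k-1}|}{|\mathbb{S}^{n-1}|}$ times an average over $g\in SO(n)$, integrated against $\sinh^{n-k-1}h\cosh^k h\,\mathrm{d}h$. In Equation \eqref{FormulaGeneralFunctionsEquation2} we change variables on $\mathbb{S}^k$ and write $\omega = \cos\theta\, e_{k+1} + \sin\theta\,\sigma$ with $\sigma\in\mathbb{S}^{k-1}$, so that $\langle\omega,e_{k+1}\rangle = \cos\theta$. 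The radial variable $s = \tanh^{-1}(\tanh h/\cos\theta)$ then satisfies the Pythagoras relation \eqref{PythagorasSpecialHnXik}, namely $\cosh s = \cosh h\cosh\rho$, where $\rho$ is the distance from $u_\xi$ along $\xi$. It is cleaner to reparametrize by $\rho$: a point of $\xi$ is $\exp_{u}(\rho\tau)$ with $\tau\in\mathbb{S}^{k-1}$, and the weight is $\sinh^{\alpha-1}\rho$. This gives
\begin{equation*}
R_\alpha f(\xi) = \int_{\mathbb{S}^{k-1}}\int_0^\infty f_g\big(\exp_{h e_{k+1}}(\rho\tau)\big)\sinh^{\alpha-1}\rho\,\mathrm{d}\rho\,\mathrm{d}\tau .
\end{equation*}

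We then apply Minkowski's inequality in $L^p(\Xi_k, \sinh^{\nu_1}\cosh^{\nu_2}\mathrm{d}\xi)$, pulling the integrals over $\tau$ and $\rho$ outside the norm. For fixed $(\tau,\rho)$, the map $\xi\mapsto \exp_{u_\xi}(\rho\tau)$ sends the pair $(g,h)$ to a point $x$ of $\mathbb{H}^n$. Its distance from $0$ satisfies $\cosh d(0,x) = \cosh h\cosh\rho$, and its direction is $g$ applied to a fixed unit vector. Averaging $|f_g|^p$ over $g\in SO(n)$ therefore yields the spherical mean of $|f|^p$ at radius $d(0,x)$, which explains the factor $(|\mathbb{S}^{n-k-1}|/|\mathbb{S}^{n-1}|)^{1/p}$. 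We then substitute $h\mapsto s$ with $\cosh s = \cosh h\cosh\rho$ at fixed $\rho$, using $\sinh h\cosh h\,\mathrm{d}h = \cosh^{-2}\!\rho\,\sinh s\cosh s\,\mathrm{d}s$. The $p$-th power of the inner norm becomes
\begin{equation*}
\int_{\mathbb{S}^{n-1}}\int_\rho^\infty |f(\omega,s)|^p \Big(\cosh^2 s/\cosh^2\rho - 1\Big)^{\frac{\nu_1+n-k-2}{2}}\Big(\frac{\cosh s}{\cosh\rho}\Big)^{\nu_2+k}\frac{\sinh s\cosh s}{\cosh^2\rho}\,\mathrm{d}s\,\mathrm{d}\omega .
\end{equation*}
The next step compares this weight pointwise with $\sup_{s>\rho} W_\rho(s)/(\sinh^{\mu_1+n-1}s\cosh^{\mu_2}s)$, which is to be bounded by $C\,m(\rho)^p$ for an explicit function $m(\rho)$. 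Under \eqref{LPLPNecesssaryEquation}, that is, $\nu_1+\nu_2\le\mu_1+\mu_2$, $\nu_1\ge \mu_1+k-p\alpha$ and $\nu_1>k-n$, we expect $m(\rho)$ to be a power of $\sinh\rho$ times a power of $\cosh\rho$. The bound then reads $\|R_\alpha f\|\le C\|f\|\,|\mathbb{S}^{k-1}|\int_0^\infty m(\rho)\sinh^{\alpha-1}\rho\,\mathrm{d}\rho$.

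The last integral is a Beta-type integral, $\int_0^\infty \sinh^{a-1}\rho\cosh^{-b}\rho\,\mathrm{d}\rho = \tfrac12 B(\tfrac a2,\tfrac{b-a}{2})$. In Case \ref{LPLPCase1}, the exponents should produce $\Gamma(\frac{\nu_1+n-k}{2p})\Gamma(\frac{\alpha+1}{2}-\frac{\nu_1+\nu_2+n+1}{2p})/\Gamma(\cdot)$. Convergence at $0$ needs $\nu_1>k-n$, and convergence at $\infty$ needs $\nu_1+\nu_2<p(\alpha+1)-n-1$, which is exactly the extra hypothesis. The assumption $p\le\frac{\mu_1+\mu_2+n+1}{\alpha+1}$ is what makes the pointwise weight comparison possible. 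In Case \ref{LPLPCase2}, $\nu_1+\nu_2\le p-n-1$ makes the $\cosh\rho$ factor decay fast enough. We can then bound it by a power of $\tanh\rho$, which gives $B(\frac\alpha2,\frac{\nu_1+n-k}{2p})$ with no extra condition. Necessity in Case \ref{LPLPCase2} is already \eqref{LPLPNecesssaryEquation}.

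The main obstacle is the pointwise weight comparison in the second step. We need to choose how to split the factors $(\cosh^2 s-\cosh^2\rho)^{(\nu_1+n-k-2)/2}$ between powers of $\sinh s$ and of $\cosh\rho$. This split has to be uniform in $s>\rho$, treating $\nu_1+n-k-2\ge0$ and $<0$ separately. If the comparison fails near $s=\rho$ when $\nu_1+n-k<2$, we would first try to absorb that singularity into the $\rho$-integral instead of using Minkowski crudely, by applying Hölder in $\rho$ before exchanging the integrals.
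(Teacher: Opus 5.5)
There is a genuine gap, and it sits at the step you flag yourself: the pointwise comparison over $s>\rho$ fails on part of the parameter range the theorem covers. Everything up to the change of variables $h\mapsto s$ is fine, apart from two bookkeeping slips. First, the exponent of $\cosh s/\cosh\rho$ should be $\nu_2+k-1$, because one factor $\cosh h$ is absorbed into $\sinh h\cosh h\,\mathrm{d}h$. Second, $\int_0^\infty\sinh^{a-1}\rho\cosh^{-b}\rho\,\mathrm{d}\rho=\frac12 B(\frac a2,\frac{b-a+1}{2})$. With these corrections, the quantity whose supremum over $s>\rho$ you need is
\[
\left(\sinh^2 s-\sinh^2\rho\right)^{\frac{\nu_1+n-k-2}{2}}\sinh^{2-\mu_1-n}s\,\cosh^{\nu_2+k-\mu_2}s\,\cosh^{-(\nu_1+\nu_2+n-1)}\rho .
\]
This breaks down in two ways.
\begin{itemize}
\item If $k-n<\nu_1<k-n+2$, the first factor blows up as $s\to\rho^{+}$ and the supremum is $+\infty$. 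This range is allowed by \eqref{LPLPNecesssaryEquation}; for hyperplanes, $k=n-1$, it contains $\nu_1=0$.
\item If $\nu_1+n-k\ge 2$, evaluating at $s=2\rho$ for small $\rho$ still gives $m(\rho)^p\gtrsim\rho^{\nu_1-k-\mu_1}$. At the endpoint $\nu_1=\mu_1+k-p\alpha$, which the theorem includes, this yields $m(\rho)\sinh^{\alpha-1}\rho\gtrsim\rho^{-1}$, so your final $\rho$-integral diverges.
\end{itemize}
Both failures have the same cause. For fixed $\rho$ the map $h\mapsto s$ degenerates at $h=0$, since $\mathrm{d}s/\mathrm{d}h=\cosh\rho\sinh h/\sinh s\to 0$. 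A supremum in $s$ taken before the $\rho$-integration cannot recover this loss, and the fallback ``H\"{o}lder in $\rho$'' is not specified enough to count as a repair.

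Your last paragraph also does not follow from your own computation. In your scheme the behaviour at infinity is governed by $\nu_1+\nu_2\le\mu_1+\mu_2$ and $p(\alpha-1)<\mu_1+\mu_2+n-1$. The hypotheses $p\le\frac{\mu_1+\mu_2+n+1}{\alpha+1}$ and $\nu_1+\nu_2<p(\alpha+1)-n-1$ never enter, the split at $\nu_1+\nu_2=p-n-1$ never appears, and no factor $\Gamma(\frac{\nu_1+n-k}{2p})$ arises. So the stated norm bounds are not obtained.

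The missing idea is to keep the parametrization you wrote down and then abandoned. The paper applies Minkowski in the direction $\omega\in\mathbb{S}^k_+$ of \eqref{FormulaGeneralFunctionsEquation2} (your $\theta$, with $c=\cos\theta=\langle\omega,e_{k+1}\rangle$). For fixed $c$ it substitutes $\tanh h=c\tanh t$. This maps $h\in(0,\tanh^{-1}c)$ diffeomorphically onto $t\in(0,\infty)$, with no degeneracy, and turns $(c^2\coth^2h-1)^{-p(\alpha+1)/2}$ into $\sinh^{p(\alpha+1)}t$. The inner weight then becomes $c^{\nu_1+n-k-p}\sinh^{\mu_1+n-1}t\cosh^{\mu_2}t$ times
\[
\tanh^{p\alpha+\nu_1-k-\mu_1}t\,\cosh^{p(\alpha+1)-\mu_1-\mu_2-n-1}t\,\left(1-c^2\tanh^2t\right)^{\frac{p-\nu_1-\nu_2-n-1}{2}} .
\]
\begin{itemize}
\item The first two factors are at most $1$ exactly because $\nu_1\ge\mu_1+k-p\alpha$ (endpoint included) and $p\le\frac{\mu_1+\mu_2+n+1}{\alpha+1}$.
\item The third factor is at most $(1-c^2)^{\frac{p-\nu_1-\nu_2-n-1}{2}}$ in Case (A) and at most $1$ in Case (B). This is where the case split comes from.
\end{itemize}
What remains is $|\mathbb{S}^{k-1}|\int_0^{\pi/2}\cos^{\frac{\nu_1+n-k}{p}-1}\theta\,\sin^{\alpha-1+\frac{p-\nu_1-\nu_2-n-1}{p}}\theta\,\mathrm{d}\theta$ in Case (A), and the same integral without the last term in the exponent of $\sin$ in Case (B). This is a Beta integral. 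It converges precisely when $\nu_1>k-n$ and, in Case (A), $\nu_1+\nu_2<p(\alpha+1)-n-1$, and it produces the Gamma quotients. In Case (A) the denominator comes out as $\Gamma(\frac{\alpha+1}{2}-\frac{\nu_2+k+1}{2p})$.

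If you only need boundedness and not these constants, there is a second repair: apply Minkowski in $g\in SO(n)$ alone. This bounds the $L^p(\mathrm{d}g)$-norm of $R_\alpha f(g\cdot\mathbb{R}^k,hg\cdot e_{k+1})$ by $R_\alpha F(\mathbb{R}^k,he_{k+1})$. Here $F(x)=(M_pf(d(0,x)))^{1/p}$, and $M_pf(s)$ is the normalized spherical mean of $|f|^p$ at radius $s$, so $F$ is radial with the same $L^p_{\mu_1,\mu_2}$ norm as $f$. This reduces the claim to Theorem \ref{LPImprovingSufficient} with $r=p$, but it does not give the stated constants.
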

		\begin{proof}
			Using Equations \eqref{FormulaGeneralFunctionsEquation2} and \eqref{XiKMeasure}, we get
			\begin{align*}
				&\| R_{\alpha}f \|_{L^{p}_{\nu_1, \nu_2} \left( \Xi_k \left( \mathbb{H}^n \right) \right)} = \left| \mathbb{S}^{n - k - 1} \right|^{\frac{1}{p}} \left[ \int\limits_{SO(n)} \int\limits_{0}^{\infty} \left| \int\limits_{\mathbb{S}^{k}_+} f_g \left( \omega, \tanh^{-1} \left( \frac{\tanh h}{\left\langle \omega, e_{k + 1} \right\rangle} \right) \right) \times \right. \right. \\
				&\left. \left. \frac{\left( \coth^2 h \left\langle \omega, e_{k + 1} \right\rangle^2 - 1 \right)^{- \left( \frac{\alpha - 1}{2} \right)} \left( 1 - \left\langle \omega, e_{k + 1} \right\rangle^2 \right)^{\frac{\alpha - k}{2}}}{\sinh h} \chi_{\left( \tanh h, 1 \right)} \left( \left\langle \omega, e_{k + 1} \right\rangle \right) \mathrm{d}\omega \right|^p \right. \\
                &\left. \sinh^{\nu_1 + n - k - 1} h \cosh^{\nu_2 + k}  h \ \mathrm{d}h \mathrm{d}g \right]^{\frac{1}{p}}.
			\end{align*}
			By an application of Minkowski's inequality, we get
			\begin{align*}
				&\| R_{\alpha}f \|_{L^{p}_{\nu_1, \nu_2} \left( \Xi_k \left( \mathbb{H}^n \right) \right)} \\
                &\leq \left| \mathbb{S}^{n - k - 1} \right|^{\frac{1}{p}} \int\limits_{\mathbb{S}^k_+} \left[ \int\limits_{SO \left( n \right)} \int\limits_{0}^{\tanh^{-1} \left( \left\langle \omega, e_{k + 1} \right\rangle \right)} \left| f_g \left( \omega, \tanh^{-1} \left( \frac{\tanh h}{\left\langle \omega, e_{k + 1} \right\rangle} \right) \right) \right|^p  \times \right. \\
				&\left. \left( \coth^2 h \left\langle \omega, e_{k + 1} \right\rangle^2 - 1 \right)^{-p \left( \frac{\alpha + 1}{2} \right)} \sinh^{\nu_1 + n - k - 1 - p} h \cosh^{\nu_2 + k} h \ \mathrm{d}h \mathrm{d}g \right]^{\frac{1}{p}} \times \\
                &\left( 1 - \left\langle \omega, e_{k + 1} \right\rangle^2 \right)^{\frac{\alpha - k}{2}} \mathrm{d}\omega.
			\end{align*}
			Next, we substitute $\tanh h = \left\langle \omega, e_{k + 1} \right\rangle \tanh t$ and simplify to obtain,
			\begin{align*}
				\| R_{\alpha}f \|_{L^{p}_{\nu_1, \nu_2} \left( \Xi_k \left( \mathbb{H}^n \right) \right)} &\leq \left| \mathbb{S}^{n - k - 1} \right|^{\frac{1}{p}} \int\limits_{\mathbb{S}^k_+} \left\langle \omega, e_{k + 1} \right\rangle^{\frac{\nu_1 + n - k}{p} - 1} \left( 1 - \left\langle \omega, e_{k + 1} \right\rangle^2 \right)^{\frac{\alpha - k}{2}} \times \\
                &\left[ \int\limits_{SO(n)} \int\limits_{0}^{\infty} \left| f_g \left( \omega, t \right) \right|^p \sinh^{\mu_1 + n - 1} t \cosh^{\mu_2} t \times \right. \\
				&\left. \tanh^{p \alpha + \nu_1 - k - \mu_1} t \cosh^{p \left( \alpha + 1 \right) - \mu_1 - \mu_2 - n - 1} t \times \right. \\
                &\left.\left( 1 - \left\langle \omega, e_{k + 1} \right\rangle^2 \tanh^2 t \right)^{\frac{p - \nu_1 - \nu_2 - n - 1}{2}} \mathrm{d}t \mathrm{d}g \right]^{\frac{1}{p}} \mathrm{d}\omega.
			\end{align*}
			Due to Equation \eqref{LPLPNecesssaryEquation}, we have $\tanh^{p \alpha + \nu_1 - k - \mu_1} \leq 1$. Also, since $p \leq \frac{\mu_1 + \mu_2 + n + 1}{\alpha + 1}$, we have $\cosh^{p \left( \alpha + 1 \right) - \mu_1 - \mu_2 - n - 1 } t \leq 1$. That is, we get,
			\begin{equation}
				\label{LPLPIntermediate}
				\begin{aligned}
					\| R_{\alpha}f \|_{L^{p}_{\nu_1, \nu_2} \left( \Xi_k \left( \mathbb{H}^n \right) \right)} &\leq \left| \mathbb{S}^{n - k - 1} \right|^{\frac{1}{p}} \int\limits_{\mathbb{S}^k_+} \left\langle \omega, e_{k + 1} \right\rangle^{\frac{\nu_1 + n - k}{p} - 1} \left( 1 - \left\langle \omega, e_{k + 1} \right\rangle^2 \right)^{\frac{\alpha - k}{2}} \\
                    &\left[ \int\limits_{SO(n)} \int\limits_{0}^{\infty} \left| f_g \left( \omega, t \right) \right|^p \sinh^{\mu_1 + n - 1} t \cosh^{\mu_2} t \times \right. \\
					&\left. \left( 1 - \left\langle \omega, e_{k + 1} \right\rangle^2 \tanh^2 t \right)^{\frac{p - \nu_1 - \nu_2 - n - 1}{2}} \mathrm{d}t \mathrm{d}g \right]^{\frac{1}{p}} \mathrm{d}\omega.
				\end{aligned}
			\end{equation}
			\begin{enumerate}
				\item[\mylabel{LPLPCase1Proof}{(A)}] Let us now consider $\nu_1 + \nu_2 > p - n - 1$. Then, we have
				$$\left( 1 - \lambda \left\langle \omega, e_{k + 1} \right\rangle^2 \tanh^2 t \right)^{\frac{p - \nu_1 - \nu_2 - n - 1}{2}} \leq \left( 1 - \left\langle \omega, e_{k + 1} \right\rangle^2 \right)^{\frac{p - \nu_1 - \nu_2 - n - 1}{2}}.$$
				Hence, we get from Equation \eqref{LPLPIntermediate},
				\begin{align*}
					\| R_{\alpha}f \|_{L^p_{\nu_1, \nu_2} \left( \Xi_k \left( \mathbb{H}^n \right) \right)} &\leq \left( \frac{\left| \mathbb{S}^{n - k - 1} \right|}{\left| \mathbb{S}^{n - 1} \right|} \right)^{\frac{1}{p}} \| f \|_{L^p_{\mu_1, \mu_2} \left( \mathbb{H}^n \right)} \times \\
                    &\int\limits_{\mathbb{S}^k_+} \left\langle \omega, e_{k + 1} \right\rangle^{\frac{\mu_1 + n - k}{p} - 1} \left( 1 - \left\langle \omega, e_{k + 1} \right\rangle^2 \right)^{\frac{\alpha - k + 1}{2} - \frac{\mu_1 + \mu_2 + n + 1}{2p}} \mathrm{d}\omega.
				\end{align*}
				From the polar decomposition of $\mathbb{S}^{k}_+$, owing to the assumption that $\nu_1 + \nu_2 < p \left( \alpha + 1 \right) - n - 1$ we get
				\begin{align*}
					&\int\limits_{\mathbb{S}^k_+} \left\langle \omega, e_{k + 1} \right\rangle^{\frac{\mu_1 + n - k}{p} - 1} \left( 1 - \left\langle \omega, e_{k + 1} \right\rangle^2 \right)^{\frac{\alpha - k + 1}{2} - \frac{\nu_1 + \nu_2 + n + 1}{2p}} \mathrm{d}\omega \\
                    &= \left| \mathbb{S}^{k - 1} \right| \frac{\Gamma \left( \frac{\nu_1 + n - k}{2p} \right) \Gamma \left( \frac{\alpha + 1}{2} - \frac{\nu_1 + \nu_2 + n + 1}{2p} \right)}{\Gamma \left( \frac{\alpha + 1}{2} - \frac{\nu_2 + k + 1}{2p} \right)}.
				\end{align*}
				This completes the proof of \ref{LPLPCase1Proof}.
				\item[\mylabel{LPLPCase2Proof}{(B)}] Next, we take up the case when $\nu_1 + \nu_2 \leq p - n - 1$. Here, we use the estimate
				$$\left( 1 - \left\langle \omega, e_{k + 1} \right\rangle^2 \tanh^2 t \right)^{\frac{p - \nu_1 - \nu_2 - n - 1}{2}} \leq 1.$$
				Then, we get from Equation \eqref{LPLPIntermediate} and polar decomposition on $\mathbb{S}^k_+$,
				\begin{align*}
					\| R_{\alpha}f \|_{L^p_{\nu_1, \nu_2} \left( \Xi_k \left( \mathbb{H}^n \right) \right)} &\leq \left( \frac{\left| \mathbb{S}^{n - k - 1} \right|}{\left| \mathbb{S}^{n - 1} \right|} \right)^{\frac{1}{p}} \left| \mathbb{S}^{k - 1} \right| \frac{\Gamma \left( \frac{\nu_1 + n - k}{2p} \right) \Gamma \left( \frac{\alpha}{2} \right)}{\Gamma \left( \frac{\alpha}{2} + \frac{\nu_1 + n - k}{2p} \right)} \| f \|_{L^p_{\mu_1, \mu_2} \left( \mathbb{H}^n \right)}.
				\end{align*}
			\end{enumerate}
		\end{proof}
	\section{Conclusion}
		\label{ConclusionSection}
		In this article, we have studied the mapping properties of a weighted analogue of the $k$-dimensional totally geodesic Radon transform on $\mathbb{H}^n$. As mentioned within Sections \ref{LPImprovingSection} and \ref{LPLPBoundednessSection}, certain aspects of this study remain open. We formulate them as questions to be undertaken at a later stage.
		\begin{question}
			\label{OpenQuestion1}
			Does Theorem \ref{LPLInfinitySufficient} hold with $\nu_1 = 0$ and $p = \frac{\mu_1 + n}{\alpha}$?
		\end{question}
		\begin{question}
			\label{OpenQuestion2}
			Does Theorem \ref{LPLPBoundednessTheorem} hold for $\frac{\mu_1 + \mu_2 + n + 1}{\alpha + 1} < p < \frac{\mu_1 + \mu_2 + n - 1}{\alpha - 1}$?
		\end{question}
		As mentioned earlier, currently, we do not have any technique to tackle the problem. However, we believe that the questions above are answered affirmatively.
		\begin{question}
			\label{OpenQuestion3}
			What is the operator norm of $R_{\alpha}$ as an operator from $L^p_{\mu_1, \mu_2} \left( \mathbb{H}^n \right)$ to $L^p_{\nu_1, \nu_2} \left( \Xi_k \left( \mathbb{H}^n \right) \right)$?
		\end{question}
		Indeed, we have an estimate for the operator norm of $R_{\alpha}$. However, we notice that the two bounds do not match, and therefore we do not have a guess to begin with. Further, if one tries to follow the techniques of Rubin (see p. 3460 of \cite{RubinKPlane}), one ends up with integrals involving the Appell's hypergeometric functions. This suggests that there is a need to study the Appell's functions in the context of evaluating their integrals and obtaining various transformation of variable formulae.
		
		We also would like to comment that in the real hyperbolic space, the transforms $R_{\alpha}$ need not be necessarily $L^p$-improving. This was shown in \cite{SelfLPImproving} (p. 71--72). We consider only $L^p$-improving mapping properties for the sake of ease and convenience to use Young's inequality. The complete understanding of ``$L^p$-decreasing" properties of the transforms $R_{\alpha}$ is yet to be made, and will indeed require techniques different than those discussed here.
		
		Apart from the study conducted in this article, we now mention a few directions that are opened up. First, we would like to stress that the operator given in Equation \eqref{RieszPotentialRTDefinition} requires an independent study. As a first step, one would like to obtain a one-dimensional (integral) representation of the operator acting on radial functions. The authors of this article believe that such a representation should be obtained in terms of compositions of fractional integrals on the half line (for the Euclidean analogue, we refer the reader to \cite{RubinFIRepresentation} for the Riesz potentials). However, any attempt at getting this representation would end up in expressions involving Appell's hypergeometric functions.
		
		Secondly, one may also think of defining a ``singular integral" over totally geodesic submanifolds of $\mathbb{H}^n$. Particularly, given a ``nice" function $f: \mathbb{H}^n \rightarrow \mathbb{C}$, and a function $\Omega: \mathbb{S}^{n - 1} \rightarrow \mathbb{C}$, we define the singular operator
		$$T_{\Omega}f \left( \xi \right) := \int\limits_{\mathbb{R}^k} f \left( g \cdot \exp_0 \left( x \right) \right) \frac{\Omega \left( \frac{x}{\| x \|} \right)}{\sinh^k \left( d \left( 0, \exp_0 \left( x \right) \right) \right)} \mathrm{d}x.$$
		Then, we may proceed to analyse the $L^p$-boundedness of the operator $T_{\Omega}$. We remark here that such operators are not studied for the Euclidean space.

\end{document}